\numberwithin{equation}{section}\theoremstyle{plain}
\newtheorem{theorem}{Theorem}[section]
\newtheorem{proposition}[theorem]{Proposition}
\newtheorem{lemma}[theorem]{Lemma}
\newtheorem{corollary}[theorem]{Corollary}
\newtheorem{definition}[theorem]{Definition}
\newtheorem{remark}[theorem]{Remark}
\renewcommand\.{\ssf}
\renewcommand\c{\text{\rm c}}
\newcommand\C{\mathbb{C}}
\newcommand\N{\mathbb{N}}
\newcommand\R{\mathbb{R}}
\renewcommand\Re{\text{\rm Re}}
\newcommand\sign{\text{\rm sign}}
\newcommand\ssb{\hspace{-.25mm}}
\newcommand\ssf{\hspace{.25mm}}
\newcommand\supp{\operatorname{supp}}
\newcommand\Z{\mathbb{Z}}
\title[product formula for Opdam's hypergeometric functions]
{Opdam's hypergeometric functions\,:\\
product formula and convolution structure\\
in dimension 1}
\author[J.-Ph. Anker, F. Ayadi and M. Sifi]
{J.-Ph. Anker, F. Ayadi and M. Sifi}
\address{Jean-Philippe Anker, Universit\'e d'Orl\'eans \& CNRS,
F\'ed\'eration Denis Poisson (FR 2964), Laboratoire MAPMO (UMR
6628), B\^atiment de Math\'ematiques, B.P. 6759, 45067 Orl\'eans
cedex 2, France} \email{anker@univ-orleans.fr}
\address{Fatma Ayadi, Universit\'e de Tunis El Manar,
Facult\'e des Sciences de Tunis, D\'epartement de Math\'ematiques,
2092 Tunis El Manar, Tunisie} \email{ayfatm@yahoo.fr}
\address{Fatma Ayadi, Universit\'e d'Orl\'eans \& CNRS,
F\'ed\'eration Denis Poisson (FR 2964), Laboratoire MAPMO (UMR
6628), B\^atiment de Math\'ematiques, B.P. 6759, 45067 Orl\'eans
cedex 2, France} \email{fatma.ben-said@etu.univ-orleans.fr}
\address{Mohamed Sifi, Universit\'e de Tunis El Manar,
Facult\'e des Sciences de Tunis, D\'epartement de Math\'ematiques,
2092 Tunis El Manar, Tunisie} \email{mohamed.sifi@fst.run.tn}
\subjclass[2000]{ Primary 33C67, 43A62, 44A35\,; Secondary 33C45,
33C52, 43A15, 43A32}
\keywords{Dunkl--Cherednik operator, Opdam--Cherednik transform,
product formula, convolution product, Kunze--Stein phenomenon}
\thanks{\textit{Acknowledgements}.
The authors were partially supported by the DGRST project
04/UR/15-02, the cooperation programs PHC Utique / CMCU 07G 1501 \&
10G 1503 and by the \textit{F\'ed\'eration Denis Poisson\/} (FR
2964).  They are grateful to the referee for his thorough checking
and detailed comments. }
\begin{document}

\begin{abstract}
Let ${\rm G}_\lambda^{(\alpha,\beta)}$ be the eigenfunctions of the
Dunkl--Cherednik operator ${\rm T}^{(\alpha,\beta)}$ on $\R$. In
this paper we express the product ${\rm
G}_\lambda^{(\alpha,\beta)}(x)\,{\rm G}_\lambda^{(\alpha,\beta)}(y)$
as an integral in terms of ${\rm G}_\lambda^{(\alpha,\beta)}(z)$
with an explicit kernel. In general this kernel is not positive.
Furthermore, by taking the so--called rational limit, we recover the
product formula of M. R\"osler for the Dunkl kernel. We then define
and study a convolution structure associated to ${\rm
G}_\lambda^{(\alpha,\beta)}.$
\end{abstract}

\maketitle

\section{Introduction}

The Opdam hypergeometric functions \,${\rm
G}_\lambda^{(\alpha,\beta)}$ on \,$\R$ \ssf are normalized
eigenfunctions
\begin{equation}\label{eigen}\begin{cases}
\;{\rm T}^{(\alpha,\beta)}\.{\rm G}_\lambda^{(\alpha,\beta)}(x)
=i\ssf\lambda\,{\rm G}_\lambda^{(\alpha,\beta)}(x)\\
\;{\rm G}_\lambda^{(\alpha,\beta)}(0)=1\,
\end{cases}\end{equation}
of the differential--difference operator
\begin{equation*}\textstyle
{\rm T}^{(\alpha,\beta)}f(x)=f^{\ssf\prime}(x)
+\underbrace{\textstyle \bigl\{(2\ssf\alpha\!+\!1)\coth
x+(2\ssf\beta\!+\!1)\tanh x\ssf\bigr\}\, \frac{f(x)\,-\,f(-x)}2
}_{\{\ssf(\alpha-\beta)\coth x\,+\,(2\ssf\beta\ssf+1)\coth2\ssf
x\ssf\}\, \{\ssf f(x)\ssf-\ssf f(-x)\ssf\}} \,-\;\rho\,f(-x)\,.
\end{equation*}
Here $\alpha\!\ge\!\beta\!\ge\!-\frac12$, $\alpha\!>\!-\frac12$,
$\rho\!=\!\alpha\!+\!\beta\!+\!1$ and $\lambda\!\in\!\C$. Notice
that, in Cherednik's notation, ${\rm T}^{(\alpha,\beta)}$ writes
\begin{equation*}\textstyle
{\rm T}(k_1,k_2) f(x)=f^{\ssf\prime}(x)
+\bigl\{\frac{2\,k_1}{1\ssf-\ssf e^{-2x}}+\frac{4\,k_2}{1\ssf-\ssf
e^{-4x}}\bigr\}\, \bigl\{\ssf f(x)\ssb-\ssb
f(-x)\ssf\bigr\}-(k_1\!+\!2\ssf k_2)\ssf f(x)\,,
\end{equation*}
with $\alpha\!=\!k_1\!+\!k_2\!-\!\frac12$ and
$\beta\!=\!k_2\!-\!\frac12$. We use as main references the article
\cite{Op} and the lecture notes \cite{O} by Opdam.

The functions ${\rm G}_\lambda^{(\alpha,\beta)}$ are closely related
to Jacobi or hypergeometric functions (see e.g. \cite[p.\;90]{Op},
\cite[Example\;7.8]{O}, \cite[Proposition\;2.1]{GT}). Specifically,
\begin{equation}\begin{aligned}\label{Gfunction}
{\rm G}_\lambda^{(\alpha,\beta)}(x)
&=\varphi^{(\alpha,\beta)}_{\lambda}(x) -\frac1{\rho\!-\!i\lambda}
\frac{\partial}{\partial x}
\varphi^{(\alpha,\beta)}_{\lambda}(x)\\
&=\varphi^{(\alpha,\beta)}_{\lambda}(x)
+\frac{\rho\!+\!i\lambda}{4(\alpha\!+\!1)}\sinh2x\,
\varphi^{(\alpha+1,\beta+1)}_{\lambda}(x)\,,
\end{aligned}\end{equation}
where \,$\varphi^{(\alpha,\beta)}_{\lambda}(x)= {}_2{\rm
F}_{\!1}\bigl(\frac{\rho+i\lambda}2,\frac{\rho-i\lambda}2\,;
\alpha\!+\!1\,;-\sinh^2\!x\bigr)$\,.

This paper deals with harmonic analysis for the functions ${\rm
G}_\lambda^{(\alpha,\beta)}$. We derive mainly a product formula for
${\rm G}_\lambda^{(\alpha,\beta)},$ which is analogous to the
corresponding result of  Flensted-Jensen and Koornwinder \cite{FK1}
for Jacobi functions, and of Ben Salem and Ould Ahmed Salem
\cite{BO} for Jacobi--Dunkl functions. The product formula is the
key information needed in order to define an associated convolution
structure on $\R.$ More precisely, we deduce the product formula
\begin{equation}\label{pint}
{\rm G}_\lambda^{(\alpha,\beta)}(x)\,{\rm
G}_\lambda^{(\alpha,\beta)}(y) =\int_{\R}{\rm
G}_\lambda^{(\alpha,\beta)}(z)\,d\mu_{\,x,y}^{(\alpha,\beta)}(z)
\quad\forall x,y\in\R, \quad\forall\lambda\in \C,
\end{equation}
from the corresponding formula for
$\varphi^{(\alpha,\beta)}_\lambda$ on $\R^+$. Here
$\mu_{\,x,y}^{(\alpha,\beta)}$ is an explicit real valued measure
with compact support on $\R$, which may not be positive and which is
uniformly bounded in $x,y\in \R$. We conclude the first part of the
paper by recovering as a limit case the product formula for the
Dunkl kernel obtained in \cite{Roes}.

In the second part of the paper, we use the product formula
\eqref{pint} to define and study the translation operators
$$
\tau_{\,x}^{(\alpha,\beta)}f(y)
:=\int_{\R}f(z)\,d\mu_{\,x,y}^{(\alpha,\beta)}(z).
$$
We next define the convolution product of suitable functions $f$ and
$g$ by
$$
f\ast_{\alpha,\beta} g(x)
:=\int_{\R}\tau_{\,x}^{(\alpha,\beta)}f(-y)\,g(y)\,A_{\alpha,\beta}(|y|)\,dy,
$$
where $A_{\alpha,\beta}(y)\!=\!(\sinh y)^{2\alpha+1}(\cosh
y)^{2\beta+1}$. We show in particular that
$f\ast_{\alpha,\beta}g=g\ast_{\alpha,\beta}f$ and that $\mathcal
F(f\ast_{\alpha,\beta}g)=\mathcal F(f)\,\mathcal F(g)$, where
$\mathcal F$ is the so--called Opdam--Cherednik transform.
Eventually we prove an analog of the Kunze--Stein phenomenon for the
$\ast_{\alpha,\beta}$-convolution product of $L^p$-spaces.

In the last part of the paper, we construct an orthogonal basis of
the Hilbert space $L^2(\R,A_{\alpha,\beta}(|x|)\ssf dx)$,
generalizing the corresponding result of Koornwinder \cite{K2} for
$L^2(\R^+\!, A_{\alpha,\beta}(x)\ssf dx)$. As a limit case, we
recover the Hermite functions constructed by Rosenblum \cite{Rosn}
in $L^2(\R, \vert x\vert^{2\alpha+1}dx)$.

Our paper is organised as follows. In section 2, we recall some
properties and formulas for Jacobi functions. In section 3, we give
the proof of the product formula for ${\rm
G}_\lambda^{(\alpha,\beta)}.$ Section 4 is devoted to the
translation operators and the associated convolution product.
Section 5 contains  a Kunze--Stein type phenomenon. In Section 6, we
construct an orthogonal basis of $L^2(\R, A_{\alpha,\beta}(|x|)\ssf
dx)$ and compute its Opdam--Cherednik transform.

\section{Preliminaries}\label{recalls}

In this section we recall some properties of the Jacobi functions.
See \cite{FK1} and  \cite{FK2} for more details, as well as the
survey \cite{K2}.

Let $\alpha\!\ge\!\beta\!\ge\!-\frac12$ with $\alpha\!\ne\!-\frac12$
and $\lambda\in \mathbb C$. The Jacobi function
$\varphi_{\lambda}^{(\alpha,\beta)}$ is defined by
\begin{align}\label{Jacobi}
\varphi_{\lambda}^{(\alpha,\beta)}(x)&={}_2{\rm F}_{\!1}\Bigl(
\frac{\rho\!+\!i\lambda}2,\frac{\rho\!-\!i\lambda}2;
\alpha\!+\!1\ssf;-\sinh^2\!x\Bigr)\\
&=(\cosh x)^{-\rho-i\lambda}\,{}_2{\rm F}_{\!1}\Bigl(
\frac{\rho\!+\!i\lambda}2,\frac{\alpha\!-\!\beta\!+\!1\!+i\lambda}2;
\alpha\!+\!1\ssf;\tanh^2\!x\Bigr)
\quad\forall\;x\!\in\!\R\,,\nonumber\end{align} where
$\rho=\alpha\!+\!\beta\!+\!1$ and ${}_2{\rm F}_{\!1}$ denotes the
hypergeometric function.

Its asymptotic behavior is generically given by
\begin{equation}\label{JacobiAsymptotic}
\varphi_{\lambda}^{(\alpha,\beta)}(x)
=\c_{\alpha,\beta}(\lambda)\,\Phi_{\lambda}^{(\alpha,\beta)}(x)
+\c_{\alpha,\beta}(-\lambda)\,\Phi_{-\lambda}^{(\alpha,\beta)}(x)
\qquad\forall\,\lambda\!\in\!\C\!\smallsetminus\!i\Z\,,\,\forall\,x\!\in\!\R^*,
\end{equation}
where
\begin{equation}\label{cfunction}
\c_{\alpha,\beta}(\lambda)
=\frac{\Gamma(2\alpha\!+\!1)}{\Gamma(\alpha\!+\!\frac12)}
\frac{\Gamma(i\lambda)}{\Gamma(\alpha\!-\!\beta\!+\!i\lambda)}
\frac{\Gamma(\frac{\alpha-\beta+i\lambda}2)}{\Gamma(\frac{\rho+i\lambda}2)}
=\frac{\Gamma(\alpha\!+\!1)\,2^{\ssf\rho-i\lambda}\,\Gamma(i\lambda)}
{\Gamma(\frac{\rho+i\lambda}2)\,\Gamma(\frac{\alpha-\beta+1+i\lambda}2)}
\end{equation}
and
\begin{equation}\label{Phifunction}
\Phi_{\lambda}^{(\alpha,\beta)}(x) =(2\cosh
x)^{-\rho+i\lambda}\,{}_2{\rm F}_{\!1}\Bigl(
\frac{\rho\!-\!i\lambda}2,\frac{\alpha\!-\!\beta\!+\!1\!-\!i\lambda}2\ssf;
1\!-\!i\lambda\ssf;\cosh^{-2}\!x\Bigr).
\end{equation}
In the limit case $\lambda\!=\!0$, we obtain
\begin{equation}\label{Jacobi0Asymptotic}
\varphi_0^{(\alpha,\beta)}(x)
=\frac{2^{\ssf\rho+1}\,\Gamma(\alpha\!+\!1)}
{\Gamma(\frac\rho2)\,\Gamma(\frac{\alpha-\beta+1}2)}\,
|x|\,e^{-\rho\,|x|} +\mathcal{O}\bigl(e^{-\rho\,|x|}\bigr)
\qquad\text{as \;}|x|\!\to\!+\infty\,,
\end{equation}
after multiplying \eqref{JacobiAsymptotic} by $\lambda$ and applying
$\frac\partial{\partial\lambda}\big|_{\lambda=0}$.

The Jacobi functions satisfy the following product formula, for
$\alpha\!>\!\beta>\!-\frac12$  and $x,y\!\ge\!0$\,:
\begin{equation}\label{formula prod1}
\varphi_{\lambda}^{(\alpha,\beta)}(x)\,\varphi_{\lambda}^{(\alpha,\beta)}(y)
=\int_{0}^{1}\int_{0}^{\pi}
\varphi_{\lambda}^{(\alpha,\beta)}(\arg\cosh|\gamma(x,y,r,\psi)|)\,
dm_{\alpha,\beta}(r,\psi),
\end{equation}
where
$$
\gamma(x,y,r,\psi)=\cosh x\cosh y+\sinh x\sinh y\,re^{i\psi},
$$
and
\begin{equation}\label{measure}
dm_{\alpha,\beta}(r,\psi)
=2\,M_{\alpha,\beta}\,(1-r^2)^{\alpha-\beta-1}(r\sin\psi)^{2\beta}\,r\,dr\,d\psi
\end{equation}
with
$$
M_{\alpha,\beta}=\frac{\Gamma(\alpha\!+\!1)}
{\sqrt{\pi}\,\Gamma(\alpha\!-\!\beta)\,\Gamma(\beta\!+\!\frac12)}.
$$
When \ssf$\alpha\ssb=\ssb\beta\ssb>\ssb-\ssb\frac12$, the product
formula becomes
\begin{equation}
\label{formula prod11}
\varphi_{\lambda}^{(\alpha,\alpha)}(x)\,\varphi_{\lambda}^{(\alpha,\alpha)}(y)
=M_{\alpha,\alpha}\int_{0}^{\pi}
\varphi_{\lambda}^{(\alpha,\alpha)}(\arg\cosh|\gamma(x,y,1,\psi)|)\,
(\sin\psi)^{2\alpha}\,d\psi,
\end{equation}
where $M_{\alpha,\alpha}=\frac{\Gamma(\alpha+1)}
{\sqrt{\pi}\,\Gamma(\alpha+\frac12)}$. Notice that the limit cases
\ssf$\alpha>\beta=-\frac12$ \ssf and
\ssf$\alpha\ssb=\ssb\beta\ssb>\ssb-\frac12$ \ssf are connected by
the quadratic transformation
\ssf$\varphi_\lambda^{(\alpha,-\frac12)}(x)
=\varphi_{2\lambda}^{(\alpha, \alpha)}\bigl(\frac x2\bigr)$.

For $\alpha>\beta>-\frac 12$ and fixed $x,y>0$, we perform the
change of variables \,$]\ssf0,1\ssf[\,\times\,
]\ssf0,\pi\ssf[\hspace{1mm}\ni\hspace{-.25mm}
(r,\psi)\ssf\longmapsto\ssf(z,\chi)
\hspace{-.25mm}\in\hspace{1mm}]\ssf0,+\infty\ssf[\,
\times\,]\ssf0,\pi\ssf[$ \,defined by
\begin{equation}\label{ChangeVariables}
\cosh z\ e^{i\chi}=\gamma(x,y,r,\psi)
\quad\Longleftrightarrow\quad\begin{cases}
\;r\cos\psi=\frac{\cosh z\cos\chi\,-\,\cosh x\cosh y}{\sinh x\,\sinh y}\,,\\
\;r\sin \psi=\frac{\cosh z\,\sin\chi}{\sinh x\,\sinh y}\,.
\end{cases}\end{equation}
This implies in particular that
$$
\cosh (x\!-\!y)\leq\cosh(z)\leq\cosh(x\!+\!y),
$$
and therefore $x$, $y$, $z$ satisfy the triangular inequality
$$
|x\!-\!y|\leq|z|\le x\!+\!y.
$$
Moreover, an easy computation gives
$$
1-r^2= (\sinh x \sinh  y)^{-2}\,g(x,y,z,\chi),
$$
where
\begin{equation}\label{g-fun}
g(x,y,z,\chi):=1-\cosh^2\!x-\cosh^2\!y-\cosh^2\!z+2\cosh x\cosh
y\cosh z\cos\chi.
\end{equation}
Furthermore, the measure \,$\sinh^2\!x\sinh^2\!y\,r\,drd\psi$
\,becomes \,$\cosh z\sinh z\,dz\,d\chi$ \,and therefore the measure
\eqref{measure} becomes
$$
dm_{\alpha,\beta}(r,\psi)\hspace{-.25mm}= 2\ssf
M_{\alpha,\beta}\hspace{.75mm} g(x,y,z,\chi)^{\alpha-\beta-1}
\bigl(\sinh x\sinh y\sinh z\bigr)^{-2\alpha}
(\sin\chi)^{2\beta}A_{\alpha,\beta}(z)\,dz\ssf d\chi,
$$
where
\begin{equation}\label{A-fun}
A_{\alpha,\beta}(z):=(\sinh z)^{2\alpha+1}(\cosh z)^{2\beta+1}.
\end{equation}
Hence,  the product formula \eqref{formula prod1} reads
\begin{equation}
\label{formula prod2}
\varphi_{\lambda}^{(\alpha,\beta)}(x)\,\varphi_{\lambda}^{(\alpha,\beta)}(y)
=\int_{\,0}^{+\infty}\!\varphi_{\lambda}^{(\alpha,\beta)}(z)\,
W_{\alpha,\beta}(x,y,z)\,A_{\alpha,\beta}(z)\,dz,\qquad x,y>0,
\end{equation}
where
\begin{equation*}
W_{\alpha,\beta}(x,y,z) :=2\,M_{\alpha,\beta}\,(\sinh x\sinh y\sinh
z)^{-2\alpha}\!
\int_{0}^{\pi}\!g(x,y,z,\chi)_{+}^{\alpha-\beta-1}(\sin\chi)^{2\beta}\,d\chi
\end{equation*}
if $x,y,z>0$ satisfy $|x-y|<z<x+y$ and $W_{\alpha,\beta}(x,y,z)=0$
otherwise. Here
$$
g_{+}=\begin{cases}
\;g&\text{if \;}g>0,\\
\;0&\text{if \;}g\le0.
\end{cases}
$$
We point out that the function  $W_{\alpha,\beta}(x,y,z)$ is
nonnegative, symmetric in the variables $x,y,z$ and that
$$
\int_{\,0}^{+\infty}\!W_{\alpha,\beta}(x,y,z)\,A_{\alpha,\beta}(z)\,dz=1.
$$
Furthermore, in \cite[Formula (4.19)]{FK1} the authors express
$W_{\alpha,\beta}$ as follows in terms of the hypergeometric
function  ${}_2{\rm F}_{\!1}$ : For every $x,y,z>0$ satisfying the
triangular inequality $|x-y|<z<x+y,$
\begin{align}\label{W}
W_{\alpha,\beta}(x,y,z) &=M_{\alpha,\alpha}\, (\cosh x\cosh y\cosh
z)^{\alpha-\beta-1}\,
(\sinh x\sinh y\sinh z)^{-2\alpha}\nonumber\\
&\times(1-B^2)^{\alpha-\frac12}\, {}_2{\rm
F}_{\!1}\Bigl(\alpha+\beta,\alpha-\beta;\alpha+\frac{1}{2};\frac{1-B}{2}\Bigr),
\end{align}
where
$$
B:=\frac{\cosh^2\!x+\cosh^2\!y+\cosh^2\!z-1}{2\cosh x\cosh y\cosh
z}.
$$
Notice that
$$
1\pm B=\frac {[\,\cosh(x\!+\!y)\pm\cosh z\,]\,[\,\cosh z\pm\cosh
(x\!-\!y)\,]} {2\cosh x\cosh y\cosh z},
$$
hence
\begin{align}\label{1-B2}
1-B^2 &=\frac
{[\,\cosh2(x\!+\!y)-\cosh2z\,]\,[\,\cosh2z-\cosh2(x\!-\!y)\,]}
{16\cosh^2\!x\cosh^2\!y\cosh^2\!z} \\
&=\frac
{\sinh(x\!+\!y\!+\!z)\sinh(\!-x\!+\!y\!+\!z)\sinh(x\!-\!y\!+\!z)\sinh(x\!+\!y\!-\!z)}
{4\cosh^2\!x\cosh^2\!y\cosh^2\!z}.\nonumber
\end{align}

In the case $\alpha=\beta>-\frac12$, we use instead the change of
variables
$$
\cosh z=|\gamma(x,y,1,\psi)|=|\cosh x\cosh y+\sinh x\sinh
y\,e^{i\psi}\,|\,,
$$
and we obtain the same product formula \eqref{formula prod2}, where
$W_{\alpha, \alpha}$ is given by
\begin{equation}\begin{aligned}\label{Walpha}
&W_{\alpha,\alpha}(x,y,z) =2^{\ssf4\alpha+1}M_{\alpha,\alpha}\,
\bigl[\sinh 2x\sinh 2y\sinh 2z\bigr]^{-2\alpha}\\
&\times\bigl[\sinh(x\!+\!y\!+\!z)\sinh(-x\!+\!y\!+\!z)
\sinh(x\!-\!y\!+\!z)\sinh(x\!+\!y\!-\!z)\bigr]^{\alpha-1/2}.
\end{aligned}\end{equation}
In the case $\alpha>\beta=-\frac12$, we use the quadratic
transformation
$$
\varphi_{\lambda}^{(\alpha,-\frac{1}{2})}(2x)
=\varphi_{2\lambda}^{(\alpha,\alpha)}(x),
$$
and we obtain again the product formula \eqref{formula prod2}, with
$$
W_{\alpha,-\frac{1}{2}}(x,y,z)
=2^{-2\alpha}\,W_{\alpha,\alpha}(\frac{x}{2},\frac{y}{2},\frac{z}{2}).
$$

As noticed by Koornwinder \cite{K1} (see also \cite{FK2}), the
product formulas \eqref{formula prod1} and \eqref{formula prod2} are
closely connected with the addition formula for the Jacobi
functions, that we recall now for later use\,:
\begin{equation}\begin{aligned}\label{AdditionFormula}
&\varphi_\lambda^{(\alpha,\beta)}(\arg\cosh|\gamma(x,y,r,\psi)|)\\
&=\sum\nolimits_{\,0\le\ell\le k<\infty}
\varphi_{\lambda,k,\ell}^{(\alpha,\beta)}(x)\,
\varphi_{-\lambda,k,\ell}^{(\alpha,\beta)}(-y)\,
\chi_{\,k,\ell}^{(\alpha,\beta)}(r,\psi)\,
\Pi_{\,k,\ell}^{(\alpha,\beta)}\,,
\end{aligned}\end{equation}
where
\begin{equation*}
\varphi_{\lambda,k,\ell}^{(\alpha,\beta)}(x)
=\frac{\c_{\alpha,\beta}(-\lambda)}
{\c_{\alpha+k+\ell,\beta+k-\ell}(-\lambda)}\, (2\sinh
x)^{k-\ell}\,(2\cosh x)^{k+\ell}\,
\varphi_\lambda^{(\alpha+k+\ell,\beta+k-\ell)}(x)
\end{equation*}
are modified Jacobi functions, the functions
\begin{equation*}\textstyle
\chi_{\,k,\ell}^{(\alpha,\beta)}(r,\psi) =r^{k-\ell}\,
\frac{\ell\,!}{(\alpha-\beta)_\ell}\, {\rm
P}_{\ssf\ell}^{(\alpha-\beta-1,\ssf\beta+k-\ell)\vphantom{\frac12}}(2r^2\hspace{-1mm}-\!1)\,
\frac{(k-\ell\ssf)!}{(\beta\ssf+\frac12)_{k-\ell}}\, {\rm P}_{\ssf
k-\ell}^{(\beta\ssf-\frac12,\beta\ssf-\frac12)}(\cos\psi)\,,
\end{equation*}
which are expressed in terms of Jacobi polynomials (see for instance
\cite{AAR})
\begin{equation}\label{JacobiPolynomial}\textstyle
{\rm P}_{\ssf n}^{(a,b)}(z) =\frac{(a\ssf+1)_n}{n\ssf!}\,{}_2{\rm
F}_{\!1} \bigl(\ssb-\ssf
n\ssf,a\!+\!b\!+\!n\!+\!1\ssf;a\!+\!1\ssf;\frac{1-z}2\ssf\bigr)\,,
\end{equation}
are orthogonal with respect to the measure \eqref{measure}, and
\begin{equation}\begin{aligned}\label{pi}
\Pi_{\,k,\ell}^{(\alpha,\beta)} &=\Big(\int_0^1\int_0^\pi
\chi_{k,\ell}^{(\alpha,\beta)}(r,\psi)^2\,
dm_{\alpha,\beta}(r,\psi)\Big)^{-1}\\
&\textstyle\;
=\frac{(\alpha+k+\ell)\,(\beta+2k-2\ell)}{(\alpha+k)\,(2\beta+k-\ell)}\,
\frac{(\alpha+1)_k\,(\alpha-\beta)_\ell\,(2\beta+1)_{k-\ell}}
{(\beta+1)_k\,\ell\ssf!\,(k-\ell)!}\,.
\end{aligned}\end{equation}

\section{Product formula for ${\rm G}_\lambda^{(\alpha,\beta)}$}

For $x,y,z\in \R$ and $\chi\in [0,\pi],$ let
\begin{equation}\label{sigma-fun}
\sigma_{x,y,z}^{\chi}=\begin{cases} \,\frac{\cosh x\,\cosh
y\,-\,\cosh z\,\cos\chi}{\sinh x\,\sinh y}
&\text{if \;}xy\neq0\,,\\
\qquad0 &\text{if \;}xy=0\,.
\end{cases}\end{equation}
Furthermore, if $\alpha>\beta>-\frac12,$ let us define $\mathcal
K_{\alpha,\beta}$ by
\begin{align*}
\mathcal{K}_{\alpha,\beta}(x,y,z) =&\hspace{1mm}M_{\alpha,\beta}\,
\bigl|\,\sinh x\,\sinh y\,\sinh z\,\bigr|^{-2\alpha}
\int_{0}^{\pi}g(x,y,z,\chi)_{+}^{\alpha-\beta-1}\\
&\times
\Bigl[1-\sigma_{x,y,z}^{\chi}+\sigma_{x,z,y}^{\chi}+\sigma_{z,y,x}^{\chi}
+\frac{\rho}{\beta+\frac{1}{2}}\coth x\coth y\coth z\,(\sin\chi)^2\Bigr]\\
&\hspace{90mm}\times(\sin\chi)^{2\beta}\,d\chi
\end{align*}
if $x,y,z\in\R^*$ satisfy the triangular inequality
$||x|-|y||<|z|<|x|+|y|$, and $\mathcal{K}_{\alpha,\beta}(x,y,z)=0$
otherwise. Here $g(x,y,z,\chi)$ is as in \eqref{g-fun}.
\begin{remark}\label{symmetry}
The following symmetry properties are easy to check:
$$
\begin{cases}
\;\mathcal{K}_{\alpha,\beta}(x,y,z)=\mathcal{K}_{\alpha,\beta}(y,x,z),\\
\;\mathcal{K}_{\alpha,\beta}(x,y,z)=\mathcal{K}_{\alpha,\beta}(-z,y,-x),\\
\;\mathcal{K}_{\alpha,\beta}(x,y,z)=\mathcal{K}_{\alpha,\beta}(x,-z,-y).
\end{cases}
$$
\end{remark}

Recall the Opdam functions ${\rm G}_\lambda^{(\alpha,\beta)}$
defined in \eqref{Gfunction}. This section is devoted to the proof
of our main result, that we state first in the case
$\alpha\!>\!\beta\!>\!-\frac12$.

\begin{theorem}\label{productalpha>beta}
Assume \,$\alpha\!>\!\beta\!>\!-\frac12$. Then \,${\rm
G}_\lambda^{(\alpha,\beta)}$ satisfies the following product formula
\begin{equation*}\label{PROD}
{\rm G}_\lambda^{(\alpha,\beta)}(x)\,{\rm
G}_\lambda^{(\alpha,\beta)}(y) =\int_{-\infty}^{+\infty}\! {\rm
G}_\lambda^{(\alpha,\beta)}(z)\,d\mu_{\,x,y}^{(\alpha,\beta)}(z),
\end{equation*}
for $x,y\in\R$ and $\lambda\in\C.$ Here
\begin{equation}\label{measure1}
d\mu_{\,x,y}^{(\alpha,\beta)}(z)=
\begin{cases}
\,\mathcal{K}_{\alpha,\beta}(x,y,z)\,A_{\alpha,\beta}(\vert
z\vert)\,dz
&\text{if \;}xy\neq0\\
\qquad d\delta_x(z)
&\text{if \;}y=0\\
\qquad d\delta_y(z) &\text{if \;}x=0
\end{cases}\end{equation}
and $A_{\alpha,\beta}$ is as in \eqref{A-fun}.
\end{theorem}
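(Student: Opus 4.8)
The plan is to reduce the product formula for ${\rm G}_\lambda^{(\alpha,\beta)}$ to the known product formula \eqref{formula prod2} for the Jacobi functions $\varphi_\lambda^{(\alpha,\beta)}$, by using the two-term expression \eqref{Gfunction}, namely ${\rm G}_\lambda^{(\alpha,\beta)}(x)=\varphi_\lambda^{(\alpha,\beta)}(x)-\frac1{\rho-i\lambda}\frac{\partial}{\partial x}\varphi_\lambda^{(\alpha,\beta)}(x)$. When $xy=0$ the statement is trivial since ${\rm G}_\lambda^{(\alpha,\beta)}(0)=1$, so assume $xy\neq0$. Because ${\rm G}_\lambda^{(\alpha,\beta)}$ is built from $\varphi_\lambda^{(\alpha,\beta)}$ and its derivative, and $\varphi_\lambda^{(\alpha,\beta)}$ is even while $\frac{\partial}{\partial x}\varphi_\lambda^{(\alpha,\beta)}$ is odd, one first writes ${\rm G}_\lambda^{(\alpha,\beta)}(x)\,{\rm G}_\lambda^{(\alpha,\beta)}(y)$ as a sum of four terms: $\varphi(x)\varphi(y)$, $-\frac1{\rho-i\lambda}\varphi'(x)\varphi(y)$, $-\frac1{\rho-i\lambda}\varphi(x)\varphi'(y)$, and $\frac1{(\rho-i\lambda)^2}\varphi'(x)\varphi'(y)$ (suppressing the superscript $(\alpha,\beta)$ and subscript $\lambda$). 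The idea is to apply the operators $1-\frac1{\rho-i\lambda}\frac{\partial}{\partial x}$ and $1-\frac1{\rho-i\lambda}\frac{\partial}{\partial y}$ to the Jacobi product formula \eqref{formula prod2}, using the eigenvalue relation ${\rm T}^{(\alpha,\beta)}{\rm G}_\lambda^{(\alpha,\beta)}=i\lambda\,{\rm G}_\lambda^{(\alpha,\beta)}$ on the right-hand side to recognise ${\rm G}_\lambda^{(\alpha,\beta)}(z)$ there.

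More precisely, I would differentiate \eqref{formula prod2} in $x$ and in $y$, and express $\frac{\partial}{\partial x}W_{\alpha,\beta}(x,y,z)$, $\frac{\partial}{\partial y}W_{\alpha,\beta}(x,y,z)$ and $\frac{\partial^2}{\partial x\,\partial y}W_{\alpha,\beta}(x,y,z)$ through the change of variables \eqref{ChangeVariables}. The key computational input is that, under \eqref{ChangeVariables}, the quantities $\sigma_{x,y,z}^\chi$, $\sigma_{x,z,y}^\chi$, $\sigma_{z,y,x}^\chi$ defined in \eqref{sigma-fun} are exactly $r\cos\psi$ and its cyclic analogues, so that logarithmic derivatives of $g(x,y,z,\chi)$ and of the Jacobian factor $(\sinh x\sinh y\sinh z)^{-2\alpha}$ with respect to $x$ and $y$ produce precisely the bracket
$$
1-\sigma_{x,y,z}^{\chi}+\sigma_{x,z,y}^{\chi}+\sigma_{z,y,x}^{\chi}
+\tfrac{\rho}{\beta+\frac12}\coth x\coth y\coth z\,(\sin\chi)^2
$$
appearing in $\mathcal K_{\alpha,\beta}$. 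One then checks that the boundary terms arising from integration by parts in $z$ (at $|z|=|x|-|y|$, $|z|=|x|+|y|$, and across $z=0$) vanish: at the endpoints of the triangular region $g(x,y,z,\chi)_+$ and hence $W_{\alpha,\beta}$ vanish to high enough order when $\alpha-\beta-1>-1$, i.e. $\alpha>\beta$, which is exactly where the hypothesis $\alpha>\beta$ is used; and the contribution near $z=0$ is controlled because $A_{\alpha,\beta}(|z|)$ vanishes there. After collecting terms, the right-hand side becomes $\int_\R {\rm G}_\lambda^{(\alpha,\beta)}(z)\,\mathcal K_{\alpha,\beta}(x,y,z)\,A_{\alpha,\beta}(|z|)\,dz$, as claimed; the even/odd parity bookkeeping lets one extend the $z$-integral from $\R^+$ to $\R$ and absorb the $\varphi'(z)$ contributions into ${\rm G}_\lambda^{(\alpha,\beta)}(z)$ via \eqref{eigen}.

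The main obstacle is the explicit differentiation of $W_{\alpha,\beta}$ and the verification that all the pieces reassemble into the compact bracket defining $\mathcal K_{\alpha,\beta}$: this requires careful use of \eqref{ChangeVariables}, the identities \eqref{1-B2}, and the contiguous/derivative relations for ${}_2{\rm F}_{\!1}$ implicit in \eqref{Gfunction} (the second line of \eqref{Gfunction} expresses ${\rm G}_\lambda^{(\alpha,\beta)}$ via $\varphi^{(\alpha+1,\beta+1)}_\lambda$, which is a useful cross-check). A secondary but essential point is justifying the interchange of differentiation and integration and the vanishing of boundary terms; here the restriction $\alpha>\beta>-\frac12$ guarantees enough regularity and decay of $g_+^{\alpha-\beta-1}$ near the boundary of $\{|x-y|<|z|<|x+y|\}$, so differentiation under the integral sign is legitimate and no spurious Dirac masses appear. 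Finally, once the identity is established for $\lambda\in\R$ (or generic $\lambda$), both sides are entire in $\lambda$, so it extends to all $\lambda\in\C$ by analytic continuation, and the measure $\mu_{x,y}^{(\alpha,\beta)}$ is visibly real-valued with compact support in $[-|x|-|y|,|x|+|y|]$.
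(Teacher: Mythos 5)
Your overall frame --- splitting ${\rm G}_\lambda^{(\alpha,\beta)}$ into its even part $\varphi_\lambda^{(\alpha,\beta)}$ and odd part $-\frac1{\rho-i\lambda}\partial_x\varphi_\lambda^{(\alpha,\beta)}$, treating the four partial products separately, and using parity to pass from $\R^+$ to $\R$ --- is exactly the paper's, and for the even--even and mixed products your plan (the latter amounting to a single differentiation of \eqref{formula prod2}, which is Lemma 2.3 of \cite{BO}) is workable: there the prefactor $\frac1{\rho-i\lambda}$ on the left cancels against the one hidden in ${\rm G}_{\lambda,\text{\rm o}}^{(\alpha,\beta)}(z)$ on the right. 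The gap is in the purely odd product. You propose to obtain $\frac1{(\rho-i\lambda)^2}\,\varphi_\lambda'(x)\,\varphi_\lambda'(y)$ by applying $\partial_x\partial_y$ to the Jacobi product formula and integrating by parts in $z$, asserting that ``parity bookkeeping'' then absorbs everything into ${\rm G}_\lambda^{(\alpha,\beta)}(z)$. But the target kernel $\mathcal K_{\alpha,\beta}$ is independent of $\lambda$, while your starting expression carries the prefactor $(\rho-i\lambda)^{-2}$; trading $\lambda^2$ for the Jacobi operator in $z$ via integration by parts only produces factors of $\rho^2+\lambda^2$, and $\frac{\rho^2+\lambda^2}{(\rho-i\lambda)^2}=\frac{\rho+i\lambda}{\rho-i\lambda}$ is still $\lambda$-dependent, so no such manipulation closes the computation by itself. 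The paper resolves this with an algebraic splitting for which your proposal offers no substitute: writing $(\rho+i\lambda)^2=-(\rho^2+\lambda^2)+2\rho\,(\rho+i\lambda)$, it decomposes the odd--odd product into $\mathcal I_{\lambda,1}+\mathcal I_{\lambda,2}$, identifies $\mathcal I_{\lambda,1}$ with the $(k,\ell)=(1,0)$ coefficient $\int\varphi_\lambda(\arg\cosh|\gamma|)\,r\cos\psi\;dm_{\alpha,\beta}$ extracted from Koornwinder's addition formula \eqref{AdditionFormula} (this yields the even-in-$z$ term $-\sigma^\chi_{x,y,z}$ of the kernel, paired with $\varphi_\lambda(z)$), and treats $\mathcal I_{\lambda,2}$ by the product formula for the shifted parameters $\varphi_\lambda^{(\alpha+1,\beta+1)}$ (this yields the odd-in-$z$ term $\frac{\rho}{\beta+1/2}\coth x\coth y\coth z\,(\sin\chi)^2$, whose pairing with ${\rm G}_{\lambda,\text{\rm o}}^{(\alpha,\beta)}(z)$ supplies exactly the remaining factor $\rho+i\lambda$). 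Without this step, or an equivalent, your outline does not produce a $\lambda$-free kernel.

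A secondary issue: your justification of differentiation under the integral sign is not right as stated. In the $(z,\chi)$ variables the integrand contains $g_+^{\alpha-\beta-1}$, which for $0<\alpha-\beta<1$ blows up at the zero set of $g$, and its formal $x$-derivative involves $g_+^{\alpha-\beta-2}\,\partial_xg$, which is not integrable there; moreover the exponent governing the behaviour of $W_{\alpha,\beta}$ at $|z|=\big||x|\pm|y|\big|$ is $\alpha-\frac12$ (see \eqref{W} and \eqref{1-B2}), not $\alpha-\beta-1$, so your boundary-term argument invokes the wrong condition. The safe route, and the one the paper implicitly follows, is to perform all differentiations and coefficient extractions on the smooth $(r,\psi)$ form \eqref{formula prod1} and only afterwards apply the change of variables \eqref{ChangeVariables}.
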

\vspace{2mm}

Let us split the Opdam function
$$
{\rm G}_\lambda^{(\alpha,\beta)} ={\rm G}_{\lambda,\text{\rm
e}}^{(\alpha,\beta)}+{\rm G}_{\lambda,\text{\rm o}}^{(\alpha,\beta)}
$$
into its even part
$$
{\rm G}_{\lambda,\text{\rm
e}}^{(\alpha,\beta)}(x)=\varphi^{(\alpha,\beta)}_{\lambda}(x)
$$
and odd part
$$
{\rm G}_{\lambda,\text{\rm o}}^{(\alpha,\beta)}(x)
=-\,\frac{1}{\rho\!-\!i\lambda}\,\frac{\partial}{\partial x}\,
\varphi^{(\alpha,\beta)}_{\lambda}(x)
=\frac{\rho\!+\!i\lambda}{4(\alpha\!+\!1)}\sinh2x\,
\varphi^{(\alpha+1,\beta+1)}_{\lambda}(x).
$$

For $x,y\in \R^*$, the product formula \eqref{formula prod2} for the
Jacobi functions yields
\begin{align*}
{\rm G}_{\lambda,\text{\rm e}}^{(\alpha,\beta)}(x)\,{\rm
G}_{\lambda,\text{\rm e}}^{(\alpha,\beta)}(y) &=\int_{\vert \vert
x\vert-\vert y\vert\vert}^{\vert x\vert +\vert y\vert} {\rm
G}_{\lambda,\text{\rm e}}^{(\alpha,\beta)}(z)\,
W_{\alpha,\beta}(|x|,|y|,z)\,A_{\alpha,\beta}(z)\,dz\\
&=\frac12\int_{I_{x,y}}{\rm G}_\lambda^{(\alpha,\beta)}(z)\,
W_{\alpha,\beta}(|x|,|y|,|z|)\,A_{\alpha,\beta}(\vert z\vert)\,dz,
\end{align*}
where \begin{equation}\label{I}
I_{x,y}:=[-|x|\!-\!|y|,-||x|\!-\!|y||]\cup[||x|\!-\!|y||,|x|\!+\!|y|]\,.
\end{equation}

Next let us turn to the mixed products. The following statement
amounts to Lemma 2.3 in \cite{BO}.

\begin{lemma}\label{oddeven}
For \,$\alpha\!>\!\beta\!>\!-\frac12$, $\lambda\!\in\!\C$ and
\,$x,y\!\in\!\R^*$, we have
\begin{align*}
{\rm G}_{\lambda,\text{\rm o}}^{(\alpha,\beta)}(x)\,{\rm
G}_{\lambda,\text{\rm e}}^{(\alpha,\beta)}(y)
&=M_{\alpha,\beta}\int_{I_{x,y}}{\rm
G}_\lambda^{(\alpha,\beta)}(z)\,
\bigl|\,\sinh x\,\sinh y\,\sinh z\,\bigr|^{-2\alpha}\\
&\times\Big\{\int_{0}^{\pi}
g(x,y,z,\chi)_{+}^{\alpha-\beta-1}\,\sigma_{x,z,y}^{\chi}\,
(\sin\chi)^{2\beta}\,d\chi\Big\}\,A_{\alpha,\beta}(|z|)\,dz\,,
\end{align*}
where  $g(x,y,z,\chi)$ is given by \eqref{g-fun},
$\sigma_{x,z,y}^{\chi}$ by \eqref{sigma-fun} and $I_{x,y}$ by
\eqref{I}.
\end{lemma}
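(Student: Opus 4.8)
The plan is to derive the mixed product formula for ${\rm G}_{\lambda,\text{\rm o}}^{(\alpha,\beta)}(x)\,{\rm G}_{\lambda,\text{\rm e}}^{(\alpha,\beta)}(y)$ from the addition formula \eqref{AdditionFormula}, exactly as Ben Salem and Ould Ahmed Salem do in \cite[Lemma 2.3]{BO}; the only work is to carry the Jacobi-function computation over to the present normalization. First I would start from the second expression in \eqref{Gfunction}, writing
$$
{\rm G}_{\lambda,\text{\rm o}}^{(\alpha,\beta)}(x)
=-\tfrac1{\rho-i\lambda}\,\tfrac{\partial}{\partial x}\,\varphi^{(\alpha,\beta)}_\lambda(x),
$$
apply $-\tfrac1{\rho-i\lambda}\,\tfrac{\partial}{\partial x}$ to the product formula \eqref{formula prod1} written for the pair $(|x|,|y|)$, and differentiate under the integral sign (legitimate since $\gamma$ and the integrand depend smoothly on $x$ and the $r$-, $\psi$-integrals are over a compact set). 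The $x$-derivative hits $\varphi_\lambda^{(\alpha,\beta)}(\arg\cosh|\gamma|)$ only through $\gamma=\gamma(|x|,|y|,r,\psi)$, so one must compute $\tfrac{\partial}{\partial x}\arg\cosh|\gamma|$ in terms of $x,y$ and the new variables $(z,\chi)$ from \eqref{ChangeVariables}.

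Next I would invoke the addition formula. The cleaner route, which is the one used in \cite{BO}, is to differentiate \eqref{AdditionFormula} term by term in $x$: since $\tfrac{\partial}{\partial x}$ commutes with the sum over $(k,\ell)$ and acts only on the modified Jacobi functions $\varphi_{\lambda,k,\ell}^{(\alpha,\beta)}(x)$, one obtains an addition-type formula for $-\tfrac1{\rho-i\lambda}\,\tfrac{\partial}{\partial x}\,\varphi_\lambda^{(\alpha,\beta)}(\arg\cosh|\gamma|)$ whose leading structure, after resumming, is again of the form $\varphi_\lambda^{(\alpha,\beta)}(\arg\cosh|\gamma|)$ times an explicit rational factor in $\cosh x,\cosh y,\cosh z,\cos\chi$. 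Performing the change of variables \eqref{ChangeVariables}, using $1-r^2=(\sinh x\sinh y)^{-2}g(x,y,z,\chi)$ and the Jacobian identity that turns $\sinh^2x\sinh^2y\,r\,dr\,d\psi$ into $\cosh z\sinh z\,dz\,d\chi$, one recognizes $g_+^{\alpha-\beta-1}(\sin\chi)^{2\beta}|\sinh x\sinh y\sinh z|^{-2\alpha}A_{\alpha,\beta}(|z|)$ as the kernel, and the rational factor collapses to $\sigma_{x,z,y}^{\chi}$ as defined in \eqref{sigma-fun}. This requires checking the elementary identity
$$
\sigma_{x,z,y}^{\chi}=\frac{\cosh x\,\cosh z-\cosh y\,\cos\chi}{\sinh x\,\sinh z}
$$
matches the quantity produced by $\tfrac{\partial}{\partial x}\arg\cosh|\gamma|$ together with the factor from the addition formula; this is where one must be careful with signs, with the symmetrization over half-lines (the domain $I_{x,y}$ in \eqref{I}), and with the factor $\rho-i\lambda$ cancelling against $\rho+i\lambda$-type constants coming from the $\c$-function ratios in \eqref{pi}.

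The main obstacle is precisely this bookkeeping: reconciling the $\tfrac{\partial}{\partial x}$ of the full Opdam splitting with the term-by-term differentiation of \eqref{AdditionFormula}, keeping track of the passage between $\R^+$ and $\R$ (parity in $x$, $y$, $z$ and the resulting doubling of the integration interval), and verifying that all spurious $\lambda$-dependent prefactors cancel so that the kernel is the $\lambda$-independent $\sigma_{x,z,y}^{\chi}$-weighted integral claimed. Once the single identity
$$
-\tfrac1{\rho-i\lambda}\,\tfrac{\partial}{\partial x}\Big[\varphi_\lambda^{(\alpha,\beta)}\big(\arg\cosh|\gamma(x,y,r,\psi)|\big)\Big]
$$
is shown, after the change of variables, to equal $\varphi_\lambda^{(\alpha,\beta)}(z)\,\sigma_{x,z,y}^{\chi}$ times the common kernel (summed over $I_{x,y}$), integrating against $dm_{\alpha,\beta}$ and using $\varphi_\lambda^{(\alpha,\beta)}={\rm G}_{\lambda,\text{\rm e}}^{(\alpha,\beta)}$ on the right and ${\rm G}_{\lambda,\text{\rm e}}^{(\alpha,\beta)}(y)=\varphi_\lambda^{(\alpha,\beta)}(y)$ on the left gives the statement. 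I expect the continuity/differentiation-under-the-integral justification to be routine given the compact support, so the entire difficulty is the explicit algebraic identification of $\sigma_{x,z,y}^{\chi}$.
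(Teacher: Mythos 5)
First, note that the paper does not actually prove Lemma \ref{oddeven}: it simply observes that the statement ``amounts to Lemma 2.3 in \cite{BO}'' and moves on. So your task was genuinely to reconstruct the argument, and the core of your first paragraph is the right one: write ${\rm G}_{\lambda,\text{\rm o}}^{(\alpha,\beta)}(x)\,{\rm G}_{\lambda,\text{\rm e}}^{(\alpha,\beta)}(y)=-\tfrac1{\rho-i\lambda}\tfrac{\partial}{\partial x}\bigl[\varphi_\lambda^{(\alpha,\beta)}(x)\varphi_\lambda^{(\alpha,\beta)}(y)\bigr]$, differentiate \eqref{formula prod1} under the integral sign, and change variables. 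What you leave unverified is in fact the whole content, and it is cleaner than you anticipate: by the chain rule the derivative produces $\varphi_\lambda^{(\alpha,\beta)\ssf\prime}(z)\,\tfrac{\partial z}{\partial x}$ with $z=\arg\cosh|\gamma|$, and since $-\tfrac1{\rho-i\lambda}\varphi_\lambda^{(\alpha,\beta)\ssf\prime}(z)={\rm G}_{\lambda,\text{\rm o}}^{(\alpha,\beta)}(z)$ \emph{by definition}, the factor $\rho-i\lambda$ cancels immediately. There is no interplay with ``$\rho+i\lambda$-type constants coming from the $\c$-function ratios in \eqref{pi}'' to worry about. The identity you must check is exactly $\tfrac{\partial z}{\partial x}=\sigma_{x,z,y}^{\chi}$, and it does hold: differentiating $\cosh^2\!z=\cosh^2\!x\cosh^2\!y+2\ssf r\cos\psi\cosh x\cosh y\sinh x\sinh y+r^2\sinh^2\!x\sinh^2\!y$ in $x$ and using $\cosh z\cos\chi=\cosh x\cosh y+r\cos\psi\sinh x\sinh y$ gives
\begin{equation*}
\sinh z\cosh z\,\frac{\partial z}{\partial x}
=\cosh x\sinh x\cosh^2\!y+r\cos\psi\cosh y\sinh y\,(\cosh^2\!x+\sinh^2\!x)+r^2\cosh x\sinh x\sinh^2\!y
=\sigma_{x,z,y}^{\chi}\,\sinh z\cosh z\ssf,
\end{equation*}
after multiplying out $(\cosh x\cosh z-\cosh y\cos\chi)\cosh z/\sinh x$. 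Integrating ${\rm G}_{\lambda,\text{\rm o}}^{(\alpha,\beta)}(z)\ssf\sigma_{x,z,y}^\chi$ against $dm_{\alpha,\beta}$, converting to the $(z,\chi)$ variables, and then using that $\sigma_{x,z,y}^\chi$ is odd in $z$ while the rest of the kernel is even (which replaces ${\rm G}_{\lambda,\text{\rm o}}^{(\alpha,\beta)}$ by ${\rm G}_{\lambda}^{(\alpha,\beta)}$ on the symmetric set $I_{x,y}$ and turns $2\ssf M_{\alpha,\beta}$ into $M_{\alpha,\beta}$), together with the parity relations $\sigma_{x,z,y}^\chi=\sign(xy)\,\sigma_{|x|,|z|,|y|}^\chi$ and $g(x,y,z,\chi)=g(|x|,|y|,|z|,\chi)$ to pass from $x,y>0$ to $x,y\in\R^*$, finishes the proof.

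The part of your proposal that is a genuine wrong turn is the second paragraph, where you call differentiating the addition formula \eqref{AdditionFormula} term by term ``the cleaner route.'' The addition formula is diagonal in the indices $(k,\ell)$: each summand pairs the \emph{same} modified function $\varphi_{\lambda,k,\ell}^{(\alpha,\beta)}$ at $x$ and at $-y$, so integrating against a single $\chi_{k,\ell}^{(\alpha,\beta)}$ can only produce symmetric products such as $\varphi_{\lambda,1,0}^{(\alpha,\beta)}(x)\,\varphi_{-\lambda,1,0}^{(\alpha,\beta)}(-y)\propto\sinh2x\sinh2y\,\varphi_\lambda^{(\alpha+1,\beta+1)}(x)\,\varphi_\lambda^{(\alpha+1,\beta+1)}(y)$. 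That device is precisely what the paper uses for the \emph{odd--odd} product (the term $\mathcal{I}_{\lambda,1}^{(\alpha,\beta)}$ in Lemma \ref{oddodd}); it cannot isolate the mixed product ${\rm G}_{\lambda,\text{\rm o}}^{(\alpha,\beta)}(x)\,{\rm G}_{\lambda,\text{\rm e}}^{(\alpha,\beta)}(y)$, and differentiating the full double series term by term and ``resumming'' is not a tractable substitute. Stick with the direct differentiation of \eqref{formula prod1}; with the computation above it closes the argument completely.
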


We consider now purely odd products, which is the most difficult
case.

\begin{lemma}\label{oddodd}
For \,$\alpha\!>\!\beta\!>\!-\frac12$, $\lambda\!\in\!\C$ and
\,$x,y\!\in\!\R^*$, we have
\begin{align*}
{\rm G}_{\lambda,\text{\rm o}}^{(\alpha,\beta)}(x)\,&{\rm
G}_{\lambda,\text{\rm o}}^{(\alpha,\beta)}(y)
=M_{\alpha,\beta}\int_{I_{x,y}} {\rm
G}_\lambda^{(\alpha,\beta)}(z)\,
\bigl|\,\sinh x\,\sinh y\,\sinh z\,\bigr|^{-2\alpha}\\
&\times\Big\{\int_{0}^{\pi}g(x,y,z,\chi)_{+}^{\alpha-\beta-1}
\Bigl[-\,\sigma_{x,y,z}^{\chi}
-\frac{\rho}{(\beta+\frac12)}\coth x\coth y\coth z\,(\sin\chi)^2\Bigr]\\
&\hspace{77mm}\times(\sin\chi)^{2\beta}d\chi\Big\}\,A_{\alpha,\beta}(|z|)\,dz.
\end{align*}
\end{lemma}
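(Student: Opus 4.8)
The plan is to deduce the purely odd formula from the mixed formula of Lemma~\ref{oddeven} by differentiation in $y$, and then to dispose of the spurious factor $(\rho-i\lambda)^{-1}$ that this introduces by means of the eigenfunction equation \eqref{eigen}. Concretely, since ${\rm G}_{\lambda,\text{\rm o}}^{(\alpha,\beta)}(y)=-\frac1{\rho-i\lambda}\,\frac{\partial}{\partial y}\,{\rm G}_{\lambda,\text{\rm e}}^{(\alpha,\beta)}(y)$ and ${\rm G}_{\lambda,\text{\rm o}}^{(\alpha,\beta)}(x)$ does not depend on $y$, Lemma~\ref{oddeven} gives
\[
{\rm G}_{\lambda,\text{\rm o}}^{(\alpha,\beta)}(x)\,{\rm G}_{\lambda,\text{\rm o}}^{(\alpha,\beta)}(y)
=-\frac1{\rho-i\lambda}\,\frac{\partial}{\partial y}\int_{I_{x,y}}{\rm G}_\lambda^{(\alpha,\beta)}(z)\,N(x,y,z)\,dz ,
\]
where $N(x,y,z)$ is the kernel occurring in Lemma~\ref{oddeven}. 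One differentiates under the integral sign: the factor ${\rm G}_\lambda^{(\alpha,\beta)}(z)$ carries no $y$; the moving endpoints of $I_{x,y}$ contribute no boundary term, since $g(x,y,z,\chi)_{+}^{\alpha-\beta-1}$, and hence $N$, vanishes for $|z|\in\{\,||x|-|y||,\;|x|+|y|\,\}$; and $\partial_y N$ is computed from the elementary derivatives $\partial_y g=2\sinh y\,(\cosh x\cosh z\cos\chi-\cosh y)$ and $\partial_y\sigma_{x,z,y}^{\chi}=-\frac{\sinh y\cos\chi}{\sinh x\sinh z}$, the singular piece $(\alpha\!-\!\beta\!-\!1)\,g_{+}^{\alpha-\beta-2}\,\partial_y g$ being reduced by an integration by parts in $\chi$ built on $\partial_\chi g=-2\cosh x\cosh y\cosh z\sin\chi$ (legitimate after a preliminary restriction of $(\alpha,\beta)$ to a range in which the relevant boundary contributions vanish, then extended by analytic continuation). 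The outcome is
\[
{\rm G}_{\lambda,\text{\rm o}}^{(\alpha,\beta)}(x)\,{\rm G}_{\lambda,\text{\rm o}}^{(\alpha,\beta)}(y)
=-\frac1{\rho-i\lambda}\int_{I_{x,y}}{\rm G}_\lambda^{(\alpha,\beta)}(z)\,\partial_y N(x,y,z)\,dz .
\]

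Write $\mathcal K_{\alpha,\beta}^{\,\rm oo}(x,y,z)$ for the kernel appearing on the right-hand side of the statement to be proved, so that the asserted identity is $\int_{I_{x,y}}{\rm G}_\lambda^{(\alpha,\beta)}\,\partial_y N\,dz=-(\rho-i\lambda)\int_{I_{x,y}}{\rm G}_\lambda^{(\alpha,\beta)}\,\mathcal K_{\alpha,\beta}^{\,\rm oo}\,dz$. Splitting $-(\rho-i\lambda)=-\rho+i\lambda$ and using the eigenfunction equation ${\rm T}^{(\alpha,\beta)}_z{\rm G}_\lambda^{(\alpha,\beta)}(z)=i\lambda\,{\rm G}_\lambda^{(\alpha,\beta)}(z)$ followed by integration by parts in $z$ over the symmetric set $I_{x,y}$ — the boundary terms again vanishing because $\mathcal K_{\alpha,\beta}^{\,\rm oo}$ vanishes at the endpoints, and the reflection $z\mapsto-z$ occurring in ${\rm T}^{(\alpha,\beta)}$ being its own transpose on $I_{x,y}$ — one turns $\int i\lambda\,{\rm G}_\lambda^{(\alpha,\beta)}\,\mathcal K_{\alpha,\beta}^{\,\rm oo}$ into $\int {\rm G}_\lambda^{(\alpha,\beta)}\,\widetilde{{\rm T}}^{(\alpha,\beta)}_z\mathcal K_{\alpha,\beta}^{\,\rm oo}$, where $\widetilde{{\rm T}}^{(\alpha,\beta)}_z$ denotes the formal transpose (in $z$, for Lebesgue measure) of ${\rm T}^{(\alpha,\beta)}$. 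The whole problem thereby collapses to the single $\lambda$-free pointwise identity
\[
\partial_y N(x,y,z)=\widetilde{{\rm T}}^{(\alpha,\beta)}_z\mathcal K_{\alpha,\beta}^{\,\rm oo}(x,y,z)-\rho\,\mathcal K_{\alpha,\beta}^{\,\rm oo}(x,y,z),
\]
it being enough to establish this pointwise (no density argument on the ${\rm G}_\lambda^{(\alpha,\beta)}$ is then needed, since the integration by parts can be run backwards).

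This last identity is checked by a direct computation, using in addition $\partial_z g=2\sinh z\,(\cosh x\cosh y\cos\chi-\cosh z)$, $\partial_z\sigma_{x,y,z}^{\chi}=-\frac{\sinh z\cos\chi}{\sinh x\sinh y}$, the simplification $|\sinh z|^{-2\alpha}A_{\alpha,\beta}(|z|)=|\sinh z|\,(\cosh z)^{2\beta+1}$, and the parities $g(x,y,-z,\chi)=g(x,y,z,\chi)$, $\sigma_{x,y,-z}^{\chi}=\sigma_{x,y,z}^{\chi}$, $\coth(-z)=-\coth z$; the residual $g_{+}^{\alpha-\beta-2}$ contribution is integrated by parts in $\chi$ exactly as above, after which both sides are matched summand by summand. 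I expect this verification to be the main obstacle: it is a long but mechanical hyperbolic--trigonometric identity whose two genuine difficulties are (i) the non-integrable factors $g_{+}^{\alpha-\beta-2}$, which force the integration by parts in $\chi$ and hence a detour through a restricted parameter range together with analytic continuation in $(\alpha,\beta)$, and (ii) the bookkeeping of the $z$-parities, since it is exactly these that govern how the even part $\varphi_\lambda^{(\alpha,\beta)}(z)$ and the odd derivative $\partial_z\varphi_\lambda^{(\alpha,\beta)}(z)$ — equivalently ${\rm G}_\lambda^{(\alpha,\beta)}(z)$ and ${\rm G}_\lambda^{(\alpha,\beta)}(-z)$ — recombine, and therefore precisely which of the two summands $-\sigma_{x,y,z}^{\chi}$ and $-\frac{\rho}{\beta+\frac12}\coth x\coth y\coth z\,(\sin\chi)^{2}$ has to appear.
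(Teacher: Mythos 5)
Your reduction is a legitimately different strategy from the paper's, but as written it is a plan rather than a proof: the entire content of the lemma --- the identification of the explicit kernel, i.e.\ the fact that precisely the two summands $-\sigma_{x,y,z}^{\chi}$ and $-\frac{\rho}{\beta+1/2}\coth x\coth y\coth z\,(\sin\chi)^2$ come out --- is packed into the pointwise identity $\partial_y N=\widetilde{{\rm T}}^{(\alpha,\beta)}_z\mathcal K^{\,\rm oo}_{\alpha,\beta}-\rho\,\mathcal K^{\,\rm oo}_{\alpha,\beta}$, which you assert but do not verify, and which you yourself flag as ``the main obstacle.'' Without that verification nothing has been proved. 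Moreover two of the technical claims supporting the reduction are false as stated on part of the parameter range. First, the kernel $N$ does \emph{not} vanish at the endpoints of $I_{x,y}$ in general: near $|z|=|x|+|y|$ the inner integral $\int_0^\pi g_+^{\alpha-\beta-1}(\sin\chi)^{2\beta}\,d\chi$ behaves like $(1-B^2)^{\alpha-\frac12}$ (compare \eqref{W} and \eqref{1-B2}), which blows up when $-\frac12<\alpha<\frac12$; so both the absence of boundary terms in the Leibniz differentiation of $\int_{I_{x,y}}$ and the vanishing of the boundary terms in the $z$-integration by parts against ${\rm T}^{(\alpha,\beta)}$ fail there, and your analytic-continuation escape route would have to be set up to cover these steps as well (with uniform control justifying analyticity in $(\alpha,\beta)$ of both sides), not only the $g_+^{\alpha-\beta-2}$ issue in $\chi$. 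Second, the passage through ${\rm G}_{\lambda,\text{\rm o}}^{(\alpha,\beta)}(y)=-\frac1{\rho-i\lambda}\partial_y\varphi_\lambda^{(\alpha,\beta)}(y)$ excludes $\lambda=-i\rho$, which is harmless by analyticity in $\lambda$ but should be said.

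For comparison, the paper avoids all differentiation of kernels. It writes $(\rho+i\lambda)^2=-(\rho^2+\lambda^2)+2\rho(\rho+i\lambda)$ and splits the product into $\mathcal I_{\lambda,1}^{(\alpha,\beta)}+\mathcal I_{\lambda,2}^{(\alpha,\beta)}$. The first piece is recognized, via the addition formula \eqref{AdditionFormula} and the orthogonality of $\chi_{1,0}^{(\alpha,\beta)}(r,\psi)=r\cos\psi$ with respect to $dm_{\alpha,\beta}$, as $\int_0^1\!\int_0^\pi\varphi_\lambda^{(\alpha,\beta)}(\arg\cosh|\gamma|)\,r\cos\psi\,dm_{\alpha,\beta}$, and the change of variables \eqref{ChangeVariables} turns $r\cos\psi$ into $-\sigma_{x,y,z}^\chi$; the second piece is handled by the product formula \eqref{formula prod2} at level $(\alpha+1,\beta+1)$, which directly produces the $\coth x\coth y\coth z\,(\sin\chi)^2$ term. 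Each summand of the kernel thus arrives for a structural reason, with only the already-established change of variables as computational input. If you want to salvage your route, the honest course is to carry out the kernel identity in full; expect it to be at least as long as the paper's argument and considerably more error-prone because of the $\chi$-integrations by parts and the $z$-parity bookkeeping you correctly identify.
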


\begin{proof}
For $x,y\!>\!0$, we have
\begin{align}\label{I1+I2}
{\rm G}_{\lambda,\text{\rm o}}^{(\alpha,\beta)}(x)\,{\rm
G}_{\lambda,\text{\rm o}}^{(\alpha,\beta)}(y)
&=\frac{(\rho\!+\!i\lambda)^2}{16\,(\alpha\!+\!1)^2}\,
\sinh2x\,\sinh2y\; \varphi_{\lambda}^{(\alpha+1,\beta+1)}(x)\,
\varphi_{\lambda}^{(\alpha+1,\beta+1)}(y)\nonumber\\
&=\,\mathcal{I}_{\lambda,1}^{(\alpha,\beta)}(x,y)
+\mathcal{I}_{\lambda,2}^{(\alpha,\beta)}(x,y),
\end{align}
where
\begin{align*}
\mathcal{I}_{\lambda,1}^{(\alpha,\beta)}(x,y)
:=-\,\frac{\rho^{\hspace{.5mm}2}\hspace{-1mm}
+\!\lambda^2}{16\,(\alpha\!+\!1)^2}\, \sinh2x\,\sinh2y\;
\varphi_{\lambda}^{(\alpha+1,\beta+1)}(x)\,
\varphi_{\lambda}^{(\alpha+1,\beta+1)}(y),
\end{align*}
and
\begin{align}\label{I-2}
\mathcal{I}_{\lambda,2}^{(\alpha,\beta)}(x,y)
:=\frac{\rho\,(\rho\!+\!i\lambda)}{8\,(\alpha\!+\!1)^2}\,
\sinh2x\,\sinh2y\; \varphi_{\lambda}^{(\alpha+1,\beta+1)}(x)
\,\varphi_{\lambda}^{(\alpha+1,\beta+1)}(y).
\end{align}

Consider first \,$\mathcal{I}_{\lambda,1}^{(\alpha,\beta)}$. We
deduce from the addition formula \eqref{AdditionFormula} that
\begin{equation*}
\int_{0}^{1}\!\int_{0}^{\pi}
\varphi_{\lambda}^{(\alpha,\beta)}(\arg\cosh|\gamma(x,y,r,\psi)|)\,
\chi_{\,1,0}^{(\alpha,\beta)}(r,\psi)\,dm_{\alpha,\beta}(r,\psi)
=\,\varphi_{\lambda,1,0}^{(\alpha,\beta)}(x)\,
\varphi_{-\lambda,1,0}^{(\alpha,\beta)}(-y),
\end{equation*}
where \,$\chi_{\,1,0}^{(\alpha,\beta)}(r,\psi)=r\cos\psi$ \,and
\,$\varphi_{\pm\lambda,1,0}^{(\alpha,\beta)}(x)
=\frac{\rho\ssf\mp\ssf i\lambda} {4\ssf(\alpha+1)}\sinh
2x\,\varphi_\lambda^{(\alpha+1,\beta+1)}(x)$. Hence
\begin{equation}\label{I-1}
\mathcal{I}_{\lambda,1}^{(\alpha,\beta)}(x,y)
=\int_{0}^{1}\!\int_{0}^{\pi}\varphi_{\lambda}^{(\alpha,\beta)}
(\arg\cosh|\gamma(x,y,r,\psi)|)\;r\cos\psi\,dm_{\alpha,\beta}(r,\psi).
\end{equation}
By performing the change of variables \eqref{ChangeVariables} and
arguing as in Section \ref{recalls}, \eqref{I-1} becomes
\begin{align*}
\mathcal{I}_{\lambda,1}^{(\alpha,\beta)}(x,y)
&=-\,2\,M_{\alpha,\beta}\int_{|x-y|}^{\,x+y}
{\rm G}_{\lambda,\text{\rm e}}^{(\alpha,\beta)}(z)\,(\sinh x\sinh y\sinh z)^{-2\alpha}\\
&\,\times\Big\{\int_{0}^{\pi}\sigma_{x,y,z}^\chi\,
g(x,y,z,\chi)_{+}^{\alpha-\beta-1}\,(\sin\chi)^{2\beta}\,d\chi\Big\}\,
A_{\alpha,\beta}(z)\,dz.
\end{align*}
By using the symmetries
\begin{equation*}\begin{cases}
\;g(x,y,z,\chi)\ssf
=\,g(|x|,|y|,|z|,\chi)\,,\\
\;\mathcal{I}_{\lambda,1}^{(\alpha,\beta)}(x,y)\ssf
=\,\sign(xy)\,\mathcal{I}_{\lambda,1}^{(\alpha,\beta)}(|x|,|y|)\,,\\
\;\sigma_{x,y,z}^{\chi} =\,\sign(xy)\,\sigma_{|x|,|y|,|z|}^{\chi}\,,
\end{cases}\end{equation*}
we conclude, for all $x,y\!\in\!\R^*$, that
\begin{equation*}\begin{aligned}
\mathcal{I}_{\lambda,1}^{(\alpha,\beta)}(x,y)
&=-\,M_{\alpha,\beta}\int_{I_{x,y}} {\rm
G}_\lambda^{(\alpha,\beta)}(z)\,
\bigl|\,\sinh x\,\sinh y\,\sinh z\,\bigr|^{-2\alpha}\\
&\,\times\Big\{\int_{0}^{\pi}\sigma_{x,y,z}^\chi\;
g(x,y,z,\chi)_{+}^{\alpha-\beta-1}(\sin\chi)^{2\beta}\,d\chi\Big\}\,
A_{\alpha,\beta}(\vert z\vert)\,dz.
\end{aligned}\end{equation*}

Consider next \,$\mathcal{I}_{\lambda,2}^{(\alpha,\beta)}$. By using
this time the product formula \eqref{formula prod2} for
$\varphi_\lambda^{(\alpha+1,\beta+1)}$, we obtain, for $x,y\!>\!0$,
\begin{align*}
\mathcal{I}_{\lambda,2}^{(\alpha,\beta)}(x,y)
&=\frac{\rho\,(\rho\!+\!i\lambda)}{4\,(\alpha\!+\!1)^2}\,
M_{\alpha+1,\beta+1}\sinh2x\sinh2y\\
&\ssf\times
\int_{|x-y|}^{x+y}\varphi_\lambda^{(\alpha+1,\beta+1)}(z)
(\sinh x\sinh y\sinh z)^{-2\alpha-2}\\
&\ssf\times
\Bigl\{\int_{0}^{\pi}\!g(x,y,z,\chi)_{+}^{\alpha-\beta-1}
(\sin \chi)^{2\beta+2}d\chi\Bigr\}\,A_{\alpha+1,\beta+1}(z)\,dz\\
&=2\,M_{\alpha,\beta}\int_{|x-y|}^{x+y} {\rm G}_{\lambda,\text{\rm
o}}^{(\alpha,\beta)}(z)\,
(\sinh x\sinh y\sinh z)^{-2\alpha}\frac\rho{\beta\!+\!\frac12}\\
&\ssf\times\coth x \coth y\coth z\,
\Bigl\{\int_{0}^{\pi}\!g(x,y,z,\chi)_{+}^{\alpha-\beta-1}
(\sin\chi)^{2\beta+2}d\chi\Bigr\}\,A_{\alpha,\beta}(z)\,dz\,.
\end{align*}
By arguing again by evenness and oddness, we deduce, for all
$x,y\!\in\!\R^*$,
\begin{equation*}\begin{aligned}
&\mathcal{I}_{\lambda,2}^{(\alpha,\beta)}(x,y)
=M_{\alpha,\beta}\int_{I_{x,y}}{\rm G}_\lambda^{(\alpha,\beta)}(z)\,
\bigl|\,\sinh x\,\sinh y\,\sinh z\,\bigr|^{-2\alpha}\\
&\times\frac{\rho}{\beta\!+\!\frac12}\, (\coth x\coth y\coth z)\,
\Bigl\{\int_{0}^{\pi}g(x,y,z,\chi)_{+}^{\alpha-\beta-1}
(\sin\chi)^{2\beta+2}d\chi\Bigr\}\, A_{\alpha,\beta}(|z|)\,dz\,.
\end{aligned}\end{equation*}
This concludes the proof of Lemma \ref{oddodd} and hence the proof
of Theorem \ref{productalpha>beta}.
\end{proof}

Next we turn our attention to the case $\alpha=\beta>-\frac{1}{2}$.
For $x,y,z\in \R,$ let
\begin{equation}\label{sigma2-fun}
\sigma_{x,y,z}=\begin{cases} \frac{\cosh 2x\,\cosh 2y\,-\,\cosh
2z}{\sinh 2x\,\sinh 2y}
&\text{if \;}xy\neq0\,,\\
\hspace{12mm}0 &\text{if \;}xy=0\,.
\end{cases}\end{equation}
Moreover, we define the kernel $\mathcal{K}_{\alpha,\alpha}$ by
\begin{align}\label{Kalphaalpha}
\mathcal{K}_{\alpha,\alpha}(x,y,z)
&=2^{\ssf4\alpha+2}M_{\alpha,\alpha}\,
e^{\hspace{.5mm}x+y-z}\,\nonumber\\
&\times\frac{\bigl[\ssf\sinh(x\!+\!y\!+\!z)\sinh(\!-x\!+\!y\!+\!z)
\sinh(x\!-\!y\!+\!z)\sinh(x\!+\!y\!-\!z)\ssf\bigr]^{\alpha-1/2}}
{\bigl|\,\sinh2x\,\sinh2y\,\sinh2z\,\bigr|^{2\alpha}}\nonumber\\
&\times\,
\frac{\sinh(x\!+\!y\!+\!z)\sinh(\!-x\!+\!y\!+\!z)\sinh(x\!-\!y\!+\!z)}
{\sinh2x\,\sinh2y\,\sinh2z}
\end{align}
if $||x|-|y||<|z|<|x|+|y|$, and
$\mathcal{K}_{\alpha,\alpha}(x,y,z)=0$ otherwise. The symmetry
properties of $\mathcal{K}_{\alpha,\beta}$ (see Remark
\ref{symmetry}) remain true for $\mathcal{K}_{\alpha,\alpha}$.

\begin{theorem}\label{productformulaalpha=beta}
In the case $\alpha=\beta>-\frac{1}{2}$, the product formula reads
\begin{equation}\label{prodalpha}
{\rm G}_\lambda^{(\alpha,\alpha)}(x)\,{\rm
G}_\lambda^{(\alpha,\alpha)}(y) =\int_{-\infty}^{+\infty}\! {\rm
G}_\lambda^{(\alpha,\alpha)}(z)\, d\mu_{\;x,y}^{(\alpha,\alpha)}(z),
\end{equation}
for $x,y\in\R$ and $\lambda\in \mathbb C.$ Here
\begin{equation}\label{measure2}
d\mu_{\;x,y}^{(\alpha,\alpha)}(z)=\begin{cases}
\,\mathcal{K}_{\alpha,\alpha}(x,y,z)\,A_{\alpha,\alpha}(\vert
z\vert)\,dz
&\text{if \;}xy\neq0\,,\\
\qquad d\delta_x(z)
&\text{if \;}y=0\,,\\
\qquad d\delta_y(z) &\text{if \;}x=0\,.
\end{cases}\end{equation}
\end{theorem}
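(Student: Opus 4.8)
The plan is to obtain Theorem~\ref{productformulaalpha=beta} as the limit $\beta\uparrow\alpha$ of Theorem~\ref{productalpha>beta}. Fix $x,y\in\R$, $\lambda\in\C$ and $\alpha'\in(-\tfrac12,\alpha)$. For every $\beta\in[\alpha',\alpha)$ the product formula of Theorem~\ref{productalpha>beta} holds and $\mu_{x,y}^{(\alpha,\beta)}$ is supported in the fixed compact interval $K:=[-|x|-|y|,\,|x|+|y|]$. Since $(\beta,z)\mapsto{\rm G}_\lambda^{(\alpha,\beta)}(z)$ is continuous (indeed analytic in $\beta$, by \eqref{Gfunction} and the analyticity of ${}_2{\rm F}_{\!1}$ in its parameters), ${\rm G}_\lambda^{(\alpha,\beta)}\to{\rm G}_\lambda^{(\alpha,\alpha)}$ uniformly on $K$, so the left-hand side of \eqref{prodalpha} is the limit of the corresponding left-hand sides for $\alpha>\beta$. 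It therefore suffices to show that $d\mu_{x,y}^{(\alpha,\beta)}$ converges, as $\beta\uparrow\alpha$, to $d\mu_{x,y}^{(\alpha,\alpha)}$ (as in \eqref{measure2}) in the weak-$*$ topology of measures on $K$. When $xy=0$ this is immediate, the measures being the $\beta$-independent Dirac masses; so assume $xy\neq0$, and since the symmetries of Remark~\ref{symmetry} hold for $\mathcal{K}_{\alpha,\beta}$ and $\mathcal{K}_{\alpha,\alpha}$ alike it is enough to treat $x,y>0$.

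Everything then rests on one elementary fact about the kernel of Theorem~\ref{productalpha>beta}. For $z$ interior to $I_{x,y}$, the function $\chi\mapsto g(x,y,z,\chi)$ of \eqref{g-fun} is strictly decreasing on $[0,\pi]$ and has a unique zero $\chi_0=\arccos B\in\,]0,\pi[$ (with $B$ as after \eqref{W}), at which $\partial_\chi g(x,y,z,\chi_0)=-2\cosh x\cosh y\cosh z\,\sin\chi_0$. Writing $\epsilon=\alpha-\beta$ and substituting $u=g(x,y,z,\chi)$, one has $\int_0^{\pi}g(x,y,z,\chi)_+^{\epsilon-1}F(\chi)\,d\chi=\int_0^{u_0}u^{\epsilon-1}\,\frac{F(\chi(u))}{|\partial_\chi g(\,\cdot\,,\chi(u))|}\,du$ with $u_0=g(x,y,z,0)$; since $M_{\alpha,\beta}=\tfrac{\Gamma(\alpha+1)}{\sqrt\pi\,\Gamma(\epsilon+1)\,\Gamma(\beta+1/2)}\,\epsilon$ while $\epsilon\,u^{\epsilon-1}\,du$ concentrates at $u=0$, it follows that for every $F_\beta$ continuous in $(\beta,\chi)$ near $\beta=\alpha$,
\[
M_{\alpha,\beta}\!\int_0^{\pi}g(x,y,z,\chi)_+^{\alpha-\beta-1}F_\beta(\chi)\,d\chi
\;\longrightarrow\;
M_{\alpha,\alpha}\,\frac{F_\alpha(\chi_0)}{|\partial_\chi g(x,y,z,\chi_0)|}\qquad(\beta\uparrow\alpha).
\]
I would apply this to each of the four products ${\rm G}_{\lambda,\mathrm e}^{(\alpha,\beta)}(x){\rm G}_{\lambda,\mathrm e}^{(\alpha,\beta)}(y)$, ${\rm G}_{\lambda,\mathrm o}^{(\alpha,\beta)}(x){\rm G}_{\lambda,\mathrm e}^{(\alpha,\beta)}(y)$, ${\rm G}_{\lambda,\mathrm e}^{(\alpha,\beta)}(x){\rm G}_{\lambda,\mathrm o}^{(\alpha,\beta)}(y)$ and ${\rm G}_{\lambda,\mathrm o}^{(\alpha,\beta)}(x){\rm G}_{\lambda,\mathrm o}^{(\alpha,\beta)}(y)=\mathcal{I}_{\lambda,1}^{(\alpha,\beta)}+\mathcal{I}_{\lambda,2}^{(\alpha,\beta)}$, whose integral representations in the proof of Theorem~\ref{productalpha>beta} are exactly of the above form with $F_\beta(\chi)$ equal, respectively, to $(\sin\chi)^{2\beta}$ times $1$, $\sigma_{x,z,y}^{\chi}$, $\sigma_{z,y,x}^{\chi}$, and $-\sigma_{x,y,z}^{\chi}+\tfrac{\rho}{\beta+1/2}\coth x\coth y\coth z\,(\sin\chi)^2$ (all continuous at $\beta=\alpha$, since $\tfrac{\rho}{\beta+1/2}\to2$). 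Adding, one gets that $\mathcal{K}_{\alpha,\beta}(x,y,z)$ converges, pointwise on the interior of $I_{x,y}$, to $M_{\alpha,\alpha}\,|\sinh x\sinh y\sinh z|^{-2\alpha}\,\dfrac{(\sin\chi_0)^{2\alpha-1}}{2\cosh x\cosh y\cosh z}\,\bigl[\,1-\sigma_{x,y,z}^{\chi_0}+\sigma_{x,z,y}^{\chi_0}+\sigma_{z,y,x}^{\chi_0}+2\coth x\coth y\coth z\,(\sin\chi_0)^2\,\bigr]$.

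Then I would identify this limit with $\mathcal{K}_{\alpha,\alpha}$. From \eqref{ChangeVariables} with the pinned value $\cos\chi_0=B$ one gets $\sigma_{x,y,z}^{\chi_0}=\sigma_{x,y,z}$ of \eqref{sigma2-fun} (equivalently $2\cosh^2\!x\cosh^2\!y-\cosh^2\!x-\cosh^2\!y-\cosh^2\!z+1=\tfrac12(\cosh2x\cosh2y-\cosh2z)$), and $(\sin\chi_0)^2=1-B^2=\tfrac{\sinh(x+y+z)\sinh(-x+y+z)\sinh(x-y+z)\sinh(x+y-z)}{4\cosh^2x\cosh^2y\cosh^2z}$ by \eqref{1-B2}. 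A short computation with sum-to-product formulas gives $2\coth x\coth y\coth z\,(1-B^2)=\tfrac{4\sinh(x+y+z)\sinh(-x+y+z)\sinh(x-y+z)\sinh(x+y-z)}{\sinh2x\sinh2y\sinh2z}$ and $1-\sigma_{x,y,z}+\sigma_{x,z,y}+\sigma_{z,y,x}=\tfrac{4\sinh(x+y+z)\sinh(-x+y+z)\sinh(x-y+z)\cosh(x+y-z)}{\sinh2x\sinh2y\sinh2z}$, so the bracket equals $\tfrac{4\sinh(x+y+z)\sinh(-x+y+z)\sinh(x-y+z)\,e^{x+y-z}}{\sinh2x\sinh2y\sinh2z}$ --- this is where the factor $e^{x+y-z}$ of \eqref{Kalphaalpha} comes from, via $\cosh(x+y-z)+\sinh(x+y-z)=e^{x+y-z}$. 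Inserting $(1-B^2)^{\alpha-1/2}$ from \eqref{1-B2} into $(\sin\chi_0)^{2\alpha-1}$ and rewriting each $\cosh w\sinh w$ as $\tfrac12\sinh2w$, the limiting kernel collapses precisely to $\mathcal{K}_{\alpha,\alpha}(x,y,z)$ as in \eqref{Kalphaalpha}; outside $I_{x,y}$ both kernels vanish. With the Dirac cases and Remark~\ref{symmetry} this yields \eqref{prodalpha}.

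The one genuine point to be checked is that the \emph{pointwise} convergence of the kernels upgrades to \emph{weak-$*$} convergence of the measures $\mathcal{K}_{\alpha,\beta}(x,y,z)\,A_{\alpha,\beta}(|z|)\,dz$, i.e.\ that $\lim_{\beta\uparrow\alpha}$ may be interchanged with $\int_{I_{x,y}}$. Away from the endpoints the kernels are bounded and converge uniformly, so the only difficulty is near $z=||x|-|y||$ and $z=|x|+|y|$, where $B\to1^-$, $\chi_0\to0$, and $\mathcal{K}_{\alpha,\beta}(x,y,z)$ vanishes or blows up like $\mathrm{dist}(z,\partial I_{x,y})^{\,\alpha-1/2}$ with constants uniform for $\beta\in[\alpha',\alpha]$; since $\alpha-\tfrac12>-1$ this is a uniformly integrable singularity, so a single dominating function works and dominated convergence applies. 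The remaining work is only the routine hyperbolic-trigonometric bookkeeping indicated above. (Alternatively one may bypass the limit entirely and repeat the proof of Theorem~\ref{productalpha>beta} verbatim with the Jacobi product formula \eqref{formula prod1} replaced by its equal-parameter version \eqref{formula prod11}, and likewise for $\varphi_\lambda^{(\alpha+1,\alpha+1)}$; then the $\chi$-integration collapses from the outset and one arrives at the same kernel $\mathcal{K}_{\alpha,\alpha}$.)
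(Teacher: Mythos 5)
Your argument is correct in substance, but it is a genuinely different route from the paper's. The paper does not pass to the limit $\beta\uparrow\alpha$: it reproves the formula directly, replacing the Jacobi product formula \eqref{formula prod1} by its equal-parameter version \eqref{formula prod11} and the change of variables \eqref{ChangeVariables} by $\cosh z=|\gamma(x,y,1,\psi)|$, so that the $\chi$-integration never appears; the four partial products are then summed using exactly the identity $\varrho(x,y,z)=1-\sigma_{x,y,z}+\sigma_{z,y,x}+\sigma_{x,z,y}=4\,\frac{\sinh(x+y+z)\sinh(-x+y+z)\sinh(x-y+z)\cosh(x+y-z)}{\sinh2x\,\sinh2y\,\sinh2z}$ that you also invoke. (This is precisely the alternative you mention in your last parenthesis.) Your limiting argument is sound: the concentration of $M_{\alpha,\beta}\,g_+^{\alpha-\beta-1}d\chi$ at $\chi_0=\arccos B$ as $\beta\uparrow\alpha$, the identities $\sigma_{x,y,z}^{\chi_0}=\sigma_{x,y,z}$ and $(\sin\chi_0)^2=1-B^2$, and the emergence of $e^{x+y-z}=\cosh(x+y-z)+\sinh(x+y-z)$ all check out and do reproduce \eqref{Kalphaalpha}. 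What your route buys is a conceptual explanation of the otherwise mysterious form of $\mathcal{K}_{\alpha,\alpha}$ (in particular of the factor $e^{x+y-z}$) as the boundary value of $\mathcal{K}_{\alpha,\beta}$; what it costs is the analytic overhead that the paper avoids. Two points in your write-up deserve more care than you give them: (a) your concentration lemma is stated for $F_\beta$ ``continuous in $(\beta,\chi)$'', but the factor $(\sin\chi)^{2\beta}$ is unbounded at $\chi=0$ when $\beta<0$; this is harmless (the singularity sits at $u=u_0$, away from the concentration point $u=0$, and is integrable since $2\beta>-1$, while $M_{\alpha,\beta}=O(\alpha-\beta)$ kills that contribution), but the lemma as stated does not literally apply and should be reformulated. (b) The domination near $\partial I_{x,y}$ needs the uniform boundedness in $\beta\in[\alpha',\alpha]$ of the ${}_2{\rm F}_{\!1}$ factor in \eqref{W} (which holds because $\frac{1-B}{2}$ stays away from $1$ on $I_{x,y}$) and a separate look at the degenerate endpoint $z\to0$ when $|x|=|y|$, where $\coth z$ blows up but is compensated by $A_{\alpha,\beta}(|z|)\,|\sinh z|^{-2\alpha}$; you assert rather than verify this. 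Neither issue is fatal, but both must be filled in for the proof to be complete.
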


\begin{proof}
The even product formula
\begin{equation*}
{\rm G}_{\lambda,\text{\rm e}}^{(\alpha,\alpha)}(x)\,{\rm
G}_{\lambda,\text{\rm e}}^{(\alpha,\alpha)}(y)
=\frac12\int_{I_{x,y}}{\rm G}_\lambda^{(\alpha,\alpha)}(z)\,
W_{\alpha,\alpha}(|x|,|y|,|z|)\,A_{\alpha,\alpha}(|z|)\,dz
\end{equation*}
and the mixed product formulae
\begin{align*}
{\rm G}_{\lambda,\text{\rm e}}^{(\alpha,\alpha)}(x)\,{\rm
G}_{\lambda,\text{\rm o}}^{(\alpha,\alpha)}(y)
&=\frac12\int_{I_{x,y}}{\rm
G}_\lambda^{(\alpha,\alpha)}(z)\,\sigma_{z,y,x}\,
W_{\alpha,\alpha}(|x|,|y|,|z|)\,A_{\alpha,\alpha}(|z|)\,dz\,,\\
{\rm G}_{\lambda,\text{\rm o}}^{(\alpha,\alpha)}(x)\,{\rm
G}_{\lambda,\text{\rm e}}^{(\alpha,\alpha)}(y)
&=\frac12\int_{I_{x,y}}{\rm
G}_\lambda^{(\alpha,\alpha)}(z)\,\sigma_{x,z,y}\,
W_{\alpha,\alpha}(|x|,|y|,|z|)\,A_{\alpha,\alpha}(|z|)\,dz
\end{align*}
are obtained as in the case $\alpha\!>\!\beta$. Here
$W_{\alpha,\alpha}$ is given by \eqref{Walpha}, $I_{x,y}$ by
\eqref{I} and $\sigma_{x,z,y}$ by \eqref{sigma2-fun}. As far as they
are concerned, odd products are splitted up as in \eqref{I1+I2}\,:
\begin{equation*}
{\rm G}_{\lambda,\text{\rm o}}^{(\alpha,\alpha)}(x)\,{\rm
G}_{\lambda,\text{\rm o}}^{(\alpha,\alpha)}(y)
=\mathcal{I}_{\lambda,1}^{(\alpha,\alpha)}(x,y)
+\mathcal{I}_{\lambda,2}^{(\alpha,\alpha)}(x,y).
\end{equation*}
The first expression $\mathcal{I}_{\lambda,1}^{(\alpha,\alpha)}$ is
handled as $\mathcal{I}_{\lambda,1}^{(\alpha,\beta)}$ in the case
$\alpha\!>\!\beta$. We perform this time the change of variables
$]0,\pi[\,\ni\!\psi\longmapsto z\!\in\,]0,+\infty[$ defined by
$\cosh z=|\gamma(x,y,1,\psi)|$ and we obtain this way
\begin{align*}
\mathcal{I}_{\lambda,1}^{(\alpha,\alpha)}(x,y)
&=-\,M_{\alpha,\alpha}\int_{0}^{\pi} {\rm G}_{\lambda,\text{\rm
e}}^{(\alpha,\alpha)}(\arg\cosh|\gamma(x,y,1,\psi)|)\,
\cos\psi\,(\sin\psi)^{2\alpha}\,d\psi\\
&=-\int_{|x-y|}^{x+y} {\rm G}_{\lambda,\text{\rm
e}}^{(\alpha,\alpha)}(z)\,\sigma_{x,y,z}\,
W_{\alpha,\alpha}(x,y,z)\,A_{\alpha,\alpha}(z)\,dz\,,
\end{align*}
hence
$$
\mathcal{I}_{\lambda,1}^{(\alpha,\alpha)}(x,y)
=-\,\frac12\int_{I_{x,y}} {\rm
G}_\lambda^{(\alpha,\alpha)}(z)\,\sigma_{x,y,z}\,
W_{\alpha,\alpha}(|x|,|y|,|z|)\,A_{\alpha,\alpha}(|z|)\,dz\,,
$$
first for $x,y\!>\!0$ and next for $x,y\!\in\!\R^*$. According to
the product formula for $\varphi_{\lambda}^{(\alpha+1,\alpha+1)}$,
the second expression $\mathcal{I}_{\lambda,2}^{(\alpha,\alpha)}$
becomes
\begin{align*}
\mathcal{I}_{\lambda,2}^{(\alpha,\alpha)}(x,y)
&=\frac{2\alpha\!+\!1}{2(\alpha\!+\!1)}
\int_{|x-y|}^{x+y}{\rm G}_{\lambda,\text{\rm o}}^{(\alpha,\alpha)}(z)\,\\
&\times\,\frac{\sinh2x\,\sinh2y}{\sinh2z}\,
W_{\alpha+1,\alpha+1}(x,y,z)\,A_{\alpha+1,\alpha+1}(z)\,dz
\end{align*}
for all $x,y\!>\!0$\,. By using
\begin{align*}
&W_{\alpha+1,\alpha+1}(x,y,z)=16\,\frac{\alpha+1}{\alpha+1/2}\\
&\times\frac
{\sinh(x\!+\!y\!+\!z)\sinh(\!-x\!+\!y\!+\!z)\sinh(x\!-\!y\!+\!z)\sinh(x\!+\!y\!-\!z)}
{\sinh^2\hspace{-.5mm}2x\sinh^2\hspace{-.5mm}2y\sinh^2\hspace{-.5mm}2z}
\,W_{\alpha,\alpha}(x,y,z)
\end{align*}
and
$$
A_{\alpha+1,\alpha+1}(z)
=\frac{\sinh^2\hspace{-.5mm}2z}4\,A_{\alpha,\alpha}(z)\,,
$$
we obtain
\begin{align*}
\mathcal{I}_{\lambda,2}^{(\alpha,\alpha)}(x,y)
&=2\int_{I_{x,y}}{\rm G}_\lambda^{(\alpha,\alpha)}(z)\\
&\times\frac
{\sinh(x\!+\!y\!+\!z)\sinh(\!-x\!+\!y\!+\!z)\sinh(x\!-\!y\!+\!z)\sinh(x\!+\!y\!-\!z)}
{\sinh2x\sinh2y\sinh2z}\\
&\times
W_{\alpha,\alpha}(|x|,|y|,|z|)\,A_{\alpha,\alpha}(|z|)\,dz\,,
\end{align*}
first for $x,y\!>\!0$ and next for $x,y\!\in\!\R^*$. We conclude the
proof of Theorem \ref{productformulaalpha=beta} by summing all
partial product formulas and by using the remarkable identity
\begin{align*}
\varrho(x,y,z)
&=1-\sigma_{x,y,z}+\sigma_{z,y,x}+\sigma_{x,z,y}\\
&=4\,\frac
{\sinh(x\!+\!y\!+\!z)\sinh(\!-x\!+\!y\!+\!z)\sinh(x\!-\!y\!+\!z)\cosh(x\!+\!y\!-\!z)}
{\sinh2x\sinh2y\sinh2z}\,.
\end{align*}
\end{proof}

Consider next the rational limit of the product formula
\eqref{prodalpha}. It is well known that the hypergeometric function
${}_2{\rm F}_{\!1}\bigl(a,b\ssf;c\ssf;z\bigr)$ tends to the
confluent hypergeometric limit function ${}_0{\rm
F}_{\!1}\bigl(c\ssf;Z\bigr)$ as $a,b\!\to\!\infty$ and $z\!\to\!0$
in such a way that $a\ssf b\ssf z\!\to\!Z$. Consequently, as
$\varepsilon\!\to\!0$,
$$
\varphi_{\lambda/\varepsilon}^{(\alpha,\alpha)}(\varepsilon x)
={}_2{\rm
F}_{\!1}\Bigl(\alpha\!+\!\frac12\!+\!i\frac\lambda{2\ssf\varepsilon},
\alpha\!+\!\frac12\!-\!i\frac\lambda{2\ssf\varepsilon}\ssf;
\alpha\!+\!1\ssf;-(\sinh\varepsilon x)^2\Bigr)
$$
tends to the normalized Bessel function
$$
{\rm j}_{\alpha}(\lambda x) ={}_0{\rm
F}_{\!1}\Bigl(\alpha\!+\!1\ssf; -\bigl(\frac{\lambda\ssf
x}2\bigr)^2\Bigr) =\Gamma(\alpha\!+\!1)\sum_{ m=0}^{+\infty}
\frac{(-1)^m}{m\ssf!\,\Gamma(\alpha\!+\!1\!+\!m)}\,
\bigl(\frac{\lambda\ssf x}2\bigr)^{2m},
$$
hence
$$
{\rm G}_{\lambda/\varepsilon}^{(\alpha,\alpha)}(\varepsilon x)
=\varphi_{\lambda/\varepsilon}^{(\alpha,\alpha)}(\varepsilon x)
+\frac{2\alpha\!+\!1\!+\!i
{\lambda}/{\varepsilon}}{4\,(\alpha\!+\!1)} \sinh(2\varepsilon x)\,
\varphi_{\lambda/\varepsilon}^{(\alpha,\alpha)}(\varepsilon x)
$$
tends to
\begin{equation*}
{\rm E}_{\,\alpha}(i\lambda,x)={\rm j}_\alpha(\lambda x)
+\frac{i\lambda\ssf x}{2\,(\alpha\!+\!1)}\,{\rm
j}_{\alpha+1}(\lambda x).
\end{equation*}
The latter expression is the so--called Dunkl kernel in dimension~1,
whose product formula was obtained in \cite{Roes}\,:
\begin{equation}\label{prodDunkl}
{\rm E}_{\,\alpha}(i\lambda, x)\,{\rm E}_{\,\alpha}(i\lambda, y)
=\int_{\R}{\rm E}_{\,\alpha}(i\lambda,
z)\,k_\alpha(x,y,z)\,|z|^{2\alpha+1}\,dz\,,
\end{equation}
where
\begin{align*}
k_{\alpha}(x,y,z)&=2^{-2\alpha}\,
\frac{\Gamma(\alpha\!+\!1)}{\sqrt{\pi}\,\Gamma(\alpha\!+\!1/2)}\,
\bigl[\,1-\varsigma_{\,x,y,z}+\varsigma_{\,z,y,x}+\varsigma_{\,x,z,y}\,\bigr]\\
&\times\frac{[\,(x\!+\!y\!+\!z)\,(\!-x\!+\!y\!+\!z)\,
(x\!-\!y\!+\!z)\,(x\!+\!y\!-\!z)\,]^{\,\alpha-\frac12}}
{|\,x\,y\,z\,|^{\,2\alpha}}\nonumber\\
\end{align*}
with
$$
\varsigma_{x,y,z}=\,\begin{cases} \,\frac{x^2+\,y^2-\,z^2}{2\,x\,y}
&\text{if \;}x\,y\neq0\,,\\
\qquad0
&\text{if \;}x\,y=0\,,\\
\end{cases}
$$
hence
\begin{align}\label{kalpha}
k_{\alpha}(x,y,z)&=2^{-2\alpha-1}\,
\frac{\Gamma(\alpha\!+\!1)}{\sqrt{\pi}\,\Gamma(\alpha\!+\!1/2)}\,
\frac{[(x\!+\!y\!+\!z)(\!-x\!+\!y\!+\!z)
(x\!-\!y\!+\!z)(x\!+\!y\!-\!z)]^{\alpha-\frac12}}
{|x\hspace{.1mm}yz|^{2\alpha}}\nonumber\\
&\times\,\frac{(x\!+\!y\!+\!z)(\!-x\!+\!y\!+\!z)(x\!-\!y\!+\!z)}
{x\hspace{.1mm}yz}\,.
\end{align}
Here is an immediate consequence of \eqref{Kalphaalpha} and
\eqref{kalpha}.

\begin{lemma}\label{limitkernel}
For every \,$\alpha\!>\!-\frac12$ and $x,y,z\!\in\!\R^*$, we have
$$
\lim_{\varepsilon\to0}\,\varepsilon^{\,2\alpha+2}\,
\mathcal{K}_{\alpha,\alpha}(\varepsilon x,\varepsilon y,\varepsilon
z) =k_{\alpha}(x,y,z).
$$
\end{lemma}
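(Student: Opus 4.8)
The plan is to substitute $(\varepsilon x,\varepsilon y,\varepsilon z)$ for $(x,y,z)$ in the explicit formula \eqref{Kalphaalpha} for $\mathcal K_{\alpha,\alpha}$ and let $\varepsilon\to0$, monitoring the power of $\varepsilon$ produced by each factor. First I would observe that the triangular inequality $||x|-|y||<|z|<|x|+|y|$ cutting out the support of $\mathcal K_{\alpha,\alpha}$ is homogeneous, hence invariant under the common scaling $(x,y,z)\mapsto(\varepsilon x,\varepsilon y,\varepsilon z)$ with $\varepsilon>0$; the same holds for the support of $k_\alpha$. Thus, for fixed $x,y,z\in\R^*$, either both sides of the asserted identity vanish (by the support conventions, as in the $g_+$ truncation of Section \ref{recalls}), or we may assume throughout that $||x|-|y||<|z|<|x|+|y|$, which is the only case requiring an argument.

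In that case the computation rests solely on the elementary asymptotics $\varepsilon^{-1}\sinh(\varepsilon t)\to t$ and $\varepsilon^{-1}\sinh(2\varepsilon t)\to2t$ as $\varepsilon\to0$, together with $e^{\varepsilon(x+y-z)}\to1$, applied factor by factor to \eqref{Kalphaalpha}. The bracket raised to the power $\alpha-\tfrac12$ is a product of four hyperbolic sines of linear forms, so it contributes $\varepsilon^{4(\alpha-1/2)}$ times $[(x+y+z)(-x+y+z)(x-y+z)(x+y-z)]^{\alpha-1/2}$, up to $o(1)$; the denominator $|\sinh2\varepsilon x\,\sinh2\varepsilon y\,\sinh2\varepsilon z|^{2\alpha}$ contributes $\varepsilon^{6\alpha}$ times $2^{6\alpha}|xyz|^{2\alpha}$, up to $o(1)$; and the final ratio tends to $\tfrac18\,(x+y+z)(-x+y+z)(x-y+z)/(xyz)$. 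Hence the net power of $\varepsilon$ carried by $\mathcal K_{\alpha,\alpha}(\varepsilon x,\varepsilon y,\varepsilon z)$ is $4(\alpha-\tfrac12)-6\alpha=-2\alpha-2$, which is exactly undone by the normalizing factor $\varepsilon^{2\alpha+2}$; in particular the limit is finite.

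It then remains to collect constants. Using $M_{\alpha,\alpha}=\Gamma(\alpha+1)/(\sqrt\pi\,\Gamma(\alpha+\tfrac12))$, the surviving numerical coefficient is
\[
2^{4\alpha+2}M_{\alpha,\alpha}\cdot 2^{-6\alpha}\cdot\tfrac18
=2^{-2\alpha-1}M_{\alpha,\alpha}
=2^{-2\alpha-1}\,\frac{\Gamma(\alpha+1)}{\sqrt\pi\,\Gamma(\alpha+\tfrac12)},
\]
which is precisely the constant in \eqref{kalpha}, while the remaining $(x,y,z)$-dependent factors coincide termwise with those of \eqref{kalpha}. This yields $\lim_{\varepsilon\to0}\varepsilon^{2\alpha+2}\mathcal K_{\alpha,\alpha}(\varepsilon x,\varepsilon y,\varepsilon z)=k_\alpha(x,y,z)$. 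There is no genuine obstacle here: the proof is just the termwise limit $\sinh(\varepsilon t)\sim\varepsilon t$, and the only things to double-check are the bookkeeping of the exponents, namely the identity $4(\alpha-\tfrac12)-6\alpha+(2\alpha+2)=0$ for the powers of $\varepsilon$, and the collapse of the powers of $2$ to $2^{-2\alpha-1}$.
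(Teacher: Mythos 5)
Your proof is correct and is exactly the computation the paper has in mind: the paper offers no written proof, simply declaring the lemma "an immediate consequence" of \eqref{Kalphaalpha} and \eqref{kalpha}, and your termwise use of $\sinh(\varepsilon t)\sim\varepsilon t$ with the exponent check $4(\alpha-\tfrac12)-6\alpha+(2\alpha+2)=0$ and the collapse of the powers of $2$ to $2^{-2\alpha-1}M_{\alpha,\alpha}$ supplies precisely the omitted bookkeeping.
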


We deduce the following result, which was announced in the abstract
and in the introduction.

\begin{corollary}
The product formula \eqref{prodDunkl} is the rational limit of the
product formula \eqref{prodalpha}. More precisely, \eqref{prodDunkl}
is obtained by replacing $\lambda$ by $\lambda/\varepsilon$ and
$(x,y)$ by $(\varepsilon x,\varepsilon y)$ in \eqref{prodalpha}, and
by letting $\varepsilon\to0$.
\end{corollary}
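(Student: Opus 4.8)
The plan is to substitute directly in the product formula \eqref{prodalpha} and then let $\varepsilon\to0$ under the integral sign; the supports will turn out to be independent of $\varepsilon$, so only a dominated--convergence argument on a fixed compact set is required. Fix $x,y\in\R^*$ and $\lambda\in\C$ (the case $xy=0$ is immediate, since then \eqref{prodalpha} already reads as a Dirac--mass identity and ${\rm E}_\alpha(i\lambda,0)=1$). Replacing $\lambda$ by $\lambda/\varepsilon$ and $(x,y)$ by $(\varepsilon x,\varepsilon y)$ in \eqref{prodalpha} and using \eqref{measure2}, together with the fact that $\mathcal{K}_{\alpha,\alpha}(\varepsilon x,\varepsilon y,\cdot)$ is supported in $I_{\varepsilon x,\varepsilon y}=\varepsilon\,I_{x,y}$, we obtain
$$
{\rm G}_{\lambda/\varepsilon}^{(\alpha,\alpha)}(\varepsilon x)\,{\rm G}_{\lambda/\varepsilon}^{(\alpha,\alpha)}(\varepsilon y)
=\int_{\varepsilon I_{x,y}} {\rm G}_{\lambda/\varepsilon}^{(\alpha,\alpha)}(z)\,
\mathcal{K}_{\alpha,\alpha}(\varepsilon x,\varepsilon y,z)\,A_{\alpha,\alpha}(|z|)\,dz.
$$
After the change of variable $z=\varepsilon w$ --- so that $w$ now ranges over the \emph{fixed} compact set $I_{x,y}$ --- and using $A_{\alpha,\alpha}(\varepsilon|w|)=\varepsilon^{\,2\alpha+1}\bigl(\varepsilon^{-1}\sinh\varepsilon|w|\bigr)^{2\alpha+1}(\cosh\varepsilon|w|)^{2\alpha+1}$, this becomes
$$
{\rm G}_{\lambda/\varepsilon}^{(\alpha,\alpha)}(\varepsilon x)\,{\rm G}_{\lambda/\varepsilon}^{(\alpha,\alpha)}(\varepsilon y)
=\int_{I_{x,y}} {\rm G}_{\lambda/\varepsilon}^{(\alpha,\alpha)}(\varepsilon w)\,
\bigl[\varepsilon^{\,2\alpha+2}\mathcal{K}_{\alpha,\alpha}(\varepsilon x,\varepsilon y,\varepsilon w)\bigr]\,
\bigl(\varepsilon^{-1}\sinh\varepsilon|w|\bigr)^{2\alpha+1}(\cosh\varepsilon|w|)^{2\alpha+1}\,dw.
$$

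Next I would pass to the limit factor by factor on the interior of $I_{x,y}$. By the computation preceding Lemma \ref{limitkernel}, ${\rm G}_{\lambda/\varepsilon}^{(\alpha,\alpha)}(\varepsilon w)\to{\rm E}_\alpha(i\lambda,w)$, and this convergence is in fact locally uniform in $w$ (it follows from the locally uniform convergence of the relevant ${}_2{\rm F}_{\!1}$ to ${}_0{\rm F}_{\!1}$ and of $\varepsilon^{-1}\sinh(2\varepsilon w)$ to $2w$); by Lemma \ref{limitkernel}, $\varepsilon^{\,2\alpha+2}\mathcal{K}_{\alpha,\alpha}(\varepsilon x,\varepsilon y,\varepsilon w)\to k_\alpha(x,y,w)$; and $\bigl(\varepsilon^{-1}\sinh\varepsilon|w|\bigr)^{2\alpha+1}(\cosh\varepsilon|w|)^{2\alpha+1}\to|w|^{2\alpha+1}$. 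Hence the integrand converges pointwise to ${\rm E}_\alpha(i\lambda,w)\,k_\alpha(x,y,w)\,|w|^{2\alpha+1}$, while the left--hand side converges to ${\rm E}_\alpha(i\lambda,x)\,{\rm E}_\alpha(i\lambda,y)$; comparing with \eqref{prodDunkl}, the corollary follows once the interchange of limit and integral is justified.

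That justification is the only genuine point. On $I_{x,y}$ the factor ${\rm G}_{\lambda/\varepsilon}^{(\alpha,\alpha)}(\varepsilon w)$ is uniformly bounded for small $\varepsilon$ by the locally uniform convergence just invoked. For the kernel, set $R:=2(|x|\!+\!|y|)$ and use the elementary inequalities $|u|\le\varepsilon^{-1}|\sinh\varepsilon u|\le|u|\cosh R$, $\,1\le\cosh\varepsilon u\le\cosh R$ and $1\le e^{\varepsilon u}\le e^{R}$, valid for $|u|\le R$ and $0<\varepsilon\le1$: feeding these into the explicit expression \eqref{Kalphaalpha} --- the very same substitution that produces Lemma \ref{limitkernel}, only with inequalities in place of limits --- shows that
$$
\bigl|\varepsilon^{\,2\alpha+2}\mathcal{K}_{\alpha,\alpha}(\varepsilon x,\varepsilon y,\varepsilon w)\bigr|\,
\bigl(\varepsilon^{-1}\sinh\varepsilon|w|\bigr)^{2\alpha+1}(\cosh\varepsilon|w|)^{2\alpha+1}
\le C_{x,y}\,\bigl|k_\alpha(x,y,w)\bigr|\,|w|^{2\alpha+1}
$$
for all $w\in I_{x,y}$ and $0<\varepsilon\le1$. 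The right--hand side is integrable over $I_{x,y}$: indeed $k_\alpha(x,y,\cdot)\,|\cdot|^{2\alpha+1}$ is exactly the finite signed measure of Rösler's product formula \eqref{prodDunkl} \cite{Roes}, and its only singular behaviour, a factor $[(x\!+\!y\!+\!w)(\!-x\!+\!y\!+\!w)(x\!-\!y\!+\!w)(x\!+\!y\!-\!w)]^{\alpha-\frac12}$ near $\partial I_{x,y}$, is integrable since $\alpha-\frac12>-1$. Dominated convergence on the fixed compact set $I_{x,y}$ thus applies and completes the proof. The only mildly delicate bookkeeping I anticipate is to check, from \eqref{Kalphaalpha}, that once $\varepsilon^{\,2\alpha+2}$ has been factored out every remaining factor is one of the controlled quantities $\varepsilon^{-1}\sinh(\varepsilon\,\cdot)$, $\cosh(\varepsilon\,\cdot)$, $e^{\varepsilon\,\cdot}$, or a base of a fixed power tending to the matching base in $k_\alpha$ --- precisely the pointwise computation already underlying Lemma \ref{limitkernel}, now read as a uniform comparison.
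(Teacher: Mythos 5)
Your proof is correct and follows essentially the same route as the paper, which deduces the corollary from the pointwise convergence ${\rm G}_{\lambda/\varepsilon}^{(\alpha,\alpha)}(\varepsilon x)\to{\rm E}_\alpha(i\lambda,x)$ together with Lemma \ref{limitkernel} for the kernel. The paper leaves the interchange of limit and integral implicit, whereas you supply it explicitly (rescaling to the fixed compact set $I_{x,y}$ and dominating by a constant multiple of $|k_\alpha(x,y,\cdot)|\,|\cdot|^{2\alpha+1}$, which is integrable since $\alpha-\frac12>-1$); this is a legitimate and welcome completion of the argument rather than a different approach.
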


\begin{theorem}\label{mu}
Let  $x,y \in\R.$
\begin{itemize}
\item[(i)]
For \,$\alpha\!\ge\!\beta\!\ge\!-\frac12$ with
\,$\alpha\!>\!-\frac12$, we have
\,$\supp\mu_{\,x,y}^{(\alpha,\beta)}\!\subset\!I_{x,y}$.
\item[(ii)]
For \,$\alpha\!\ge\!\beta\!\ge\!-\frac12$ with
\,$\alpha\!>\!-\frac12$, we have
\,$\mu_{\,x,y}^{(\alpha,\beta)}(\R)=1$.
\item[(iii)]
For \,$\alpha\!>\!\beta\!>\!-\frac12$, we have
\,$\|\ssf\mu_{\,x,y}^{(\alpha,\beta)}\|
\le4+\displaystyle{\frac{\Gamma(\alpha+1)}{\Gamma(\alpha+\frac12)}
\frac{\Gamma(\beta+\frac12)}{\Gamma(\beta+1)}}$.
\item[(iv)]
For \,$\alpha\!=\!\beta\!>\!-\frac12$, we have
\,$\|\ssf\mu_{x,y}^{(\alpha,\alpha)}\|\leq\frac{5}{2}.$
\end{itemize}
\end{theorem}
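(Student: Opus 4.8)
The plan is to establish the four parts in order; (i) and (ii) are immediate and (iii)--(iv) carry the content.

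\smallskip\noindent\emph{Parts (i) and (ii).} By construction $\mathcal{K}_{\alpha,\beta}(x,y,z)$ and $\mathcal{K}_{\alpha,\alpha}(x,y,z)$ vanish unless $||x|-|y||<|z|<|x|+|y|$, so for $xy\neq0$ the density of $\mu_{x,y}^{(\alpha,\beta)}$ is supported in the interior of $I_{x,y}$; for $xy=0$ the measure is $\delta_x$ or $\delta_y$ and $x\in\{-|x|,|x|\}=I_{x,0}$, $y\in\{-|y|,|y|\}=I_{0,y}$. For (ii), take $\lambda=i\rho$: then the first parameter $\frac{\rho+i\lambda}{2}$ of the $ {}_2{\rm F}_{\!1}$ in \eqref{Jacobi} is $0$, so $\varphi_{i\rho}^{(\alpha,\beta)}\equiv1$, whence by \eqref{Gfunction} the derivative term drops and ${\rm G}_{i\rho}^{(\alpha,\beta)}\equiv1$; inserting $\lambda=i\rho$ in \eqref{pint} gives $1=\int_{\R}d\mu_{x,y}^{(\alpha,\beta)}=\mu_{x,y}^{(\alpha,\beta)}(\R)$.

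\smallskip\noindent\emph{Parts (iii) and (iv).} For $xy=0$ one has $\|\mu_{x,y}^{(\alpha,\beta)}\|=1$, below both bounds, so assume $xy\neq0$. I would start from the very decomposition used in the proofs of Theorems \ref{productalpha>beta} and \ref{productformulaalpha=beta}: splitting ${\rm G}_\lambda(x){\rm G}_\lambda(y)$ into its even--even, odd--even, even--odd and odd--odd sub-products writes $\mu_{x,y}^{(\alpha,\beta)}=\mu^{\mathrm{ee}}+\mu^{\mathrm{oe}}+\mu^{\mathrm{eo}}+\mu^{\mathrm{oo}}$, and \eqref{I1+I2} splits $\mu^{\mathrm{oo}}=\mu^{\mathrm{oo}}_1+\mu^{\mathrm{oo}}_2$. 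Here $\mu^{\mathrm{ee}}=\frac12 W_{\alpha,\beta}(|x|,|y|,|\cdot|)A_{\alpha,\beta}(|\cdot|)\,dz$ is nonnegative of total mass $1$ (since $\int_0^{+\infty}W_{\alpha,\beta}(|x|,|y|,z)A_{\alpha,\beta}(z)\,dz=1$), so $\|\mu^{\mathrm{ee}}\|=1$; the densities of $\mu^{\mathrm{oe}},\mu^{\mathrm{eo}},\mu^{\mathrm{oo}}_1$ carry the extra factors $\sigma_{x,z,y}^\chi$, $\sigma_{z,y,x}^\chi$, $\sigma_{x,y,z}^\chi$, and passing through the change of variables \eqref{ChangeVariables} — under which the probability measure $m_{\alpha,\beta}$ of \eqref{measure} becomes the $z$-picture and $\sigma_{x,y,z}^\chi=-r\cos\psi$ — one bounds $\|\mu^{\mathrm{oe}}\|,\|\mu^{\mathrm{eo}}\|,\|\mu^{\mathrm{oo}}_1\|$ by integrals of those factors against $m_{\alpha,\beta}$, each $\le1$. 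Finally $\mu^{\mathrm{oo}}_2$ comes from the Jacobi product formula \eqref{formula prod2} for $\varphi_\lambda^{(\alpha+1,\beta+1)}$: using ${\rm G}_{\lambda,\text{\rm o}}^{(\alpha,\beta)}(z)=\frac{\rho+i\lambda}{4(\alpha+1)}\sinh2z\;\varphi_\lambda^{(\alpha+1,\beta+1)}(z)$ and symmetrising in $z$ one finds that its density is $\frac{\rho\,\sinh2x\,\sinh2y}{4(\alpha+1)}\,\frac{W_{\alpha+1,\beta+1}(|x|,|y|,|z|)}{\sinh2z}\,A_{\alpha+1,\beta+1}(|z|)$, whence $\|\mu^{\mathrm{oo}}_2\|=\frac{\rho\,\sinh2|x|\,\sinh2|y|}{2(\alpha+1)}\int_0^{+\infty}\frac{W_{\alpha+1,\beta+1}(|x|,|y|,z)}{\sinh2z}\,A_{\alpha+1,\beta+1}(z)\,dz$. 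The triangle inequality then gives $\|\mu_{x,y}^{(\alpha,\beta)}\|\le4+\|\mu^{\mathrm{oo}}_2\|$, so (iii) reduces to $\|\mu^{\mathrm{oo}}_2\|\le\frac{\Gamma(\alpha+1)}{\Gamma(\alpha+\frac12)}\frac{\Gamma(\beta+\frac12)}{\Gamma(\beta+1)}$; for (iv) one replaces $W_{\alpha+1,\alpha+1}$ by its elementary closed form (cf.\ \eqref{Walpha} and the identity used in the proof of Theorem \ref{productformulaalpha=beta}), which both evaluates $\|\mu^{\mathrm{oo}}_2\|$ explicitly and lets one tighten the three $\sigma$-pieces, producing the sharper constant $\frac52$.

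\smallskip\noindent The crux — and the step I expect to be delicate — is this last estimate of $\|\mu^{\mathrm{oo}}_2\|$, uniformly in $x,y\in\R^*$. Inserting the explicit formula \eqref{W} (equivalently \eqref{1-B2}) for $W_{\alpha+1,\beta+1}$ and passing to the variables \eqref{ChangeVariables}, one is left with an integral in which the prefactor $\frac{\sinh2|x|\,\sinh2|y|}{\sinh2z}$ looks singular as $z\to||x|-|y||$ — in particular as $x\to y$, or as $x,y\to0$. A crude pointwise bound such as $\sinh z\ge\sinh|x|\,\sinh|y|\,\sin\psi$ (which follows from $\Re\gamma(x,y,r,\psi)\ge\cosh(|x|-|y|)\ge1$) is \emph{not} enough, since it leaves an unbounded factor $\coth|x|\,\coth|y|$: one must instead exploit that $W_{\alpha+1,\beta+1}$ vanishes at $z=||x|-|y||$ to exactly the order that cancels this singularity, i.e.\ combine the identity $\int_0^{+\infty}\sinh2z\cdot(\text{density of }\mu^{\mathrm{oo}}_2)\,dz=\frac{\rho\,\sinh2|x|\,\sinh2|y|}{4(\alpha+1)}$ with a careful analysis of the region near $z=||x|-|y||$, so that after the cancellation the integral is uniformly bounded by (resp., for $\alpha=\beta$, exactly equal to a quantity producing) the Gamma-function constant above.
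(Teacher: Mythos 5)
Your handling of (i), (ii) and of the four pieces $\mu^{\mathrm{ee}},\mu^{\mathrm{oe}},\mu^{\mathrm{eo}},\mu^{\mathrm{oo}}_1$ is correct and follows the same decomposition as the paper: the paper likewise groups these into the kernel $\widetilde{\mathcal K}_{\alpha,\beta}$ built from $1-\sigma^\chi_{x,y,z}+\sigma^\chi_{x,z,y}+\sigma^\chi_{z,y,x}$ and bounds its total variation by $4$ (quoting \cite[Proposition 2.7]{BO}, whereas you justify it directly from $|\sigma^\chi|\le 1$ on the support of $g_+$, which is fine).

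The genuine gap is the one you yourself flag: the estimate $\|\mu^{\mathrm{oo}}_2\|\le\frac{\Gamma(\alpha+1)}{\Gamma(\alpha+\frac12)}\frac{\Gamma(\beta+\frac12)}{\Gamma(\beta+1)}$ is only asserted, and the method you sketch for it --- an endpoint analysis in the $z$-variable near $z=||x|-|y||$, matching the vanishing order of $W_{\alpha+1,\beta+1}$ against the factor $(\sinh 2z)^{-1}$ --- is neither carried out nor clearly uniform in $x,y$. The paper's proof never changes variables to $z$ for this piece: starting from \eqref{I-2} and the $(r,\psi)$-form \eqref{formula prod1} of the product formula for $\varphi_\lambda^{(\alpha+1,\beta+1)}$, it uses $|\gamma|\sqrt{|\gamma|^2-1}=\cosh z\,\sinh z$ and writes $|\gamma|^2=2r\cosh x\cosh y\sinh x\sinh y\,(U+\cos\psi)$, $|\gamma|^2-1=2r\cosh x\cosh y\sinh x\sinh y\,(V+\cos\psi)$ with $U>V\ge1$; the inequality $\bigl[(U+\cos\psi)(V+\cos\psi)\bigr]^{-1/2}\le(1+\cos\psi)^{-1}$ then cancels the prefactor $\sinh2x\,\sinh2y$ exactly, and since $(\sin\psi)^{2\beta+2}(1+\cos\psi)^{-1}=(\sin\psi)^{2\beta}(1-\cos\psi)$ the remaining double integral is an explicit Beta integral equal to $\frac\rho2\,\frac{\Gamma(\alpha+1)}{\Gamma(\alpha+\frac32)}\frac{\Gamma(\beta+\frac12)}{\Gamma(\beta+1)}$, which is $\le\frac{\Gamma(\alpha+1)}{\Gamma(\alpha+\frac12)}\frac{\Gamma(\beta+\frac12)}{\Gamma(\beta+1)}$ because $\rho\le2(\alpha+\tfrac12)$. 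Without this (or an equivalent) uniform computation your proof of (iii) is incomplete. The same criticism applies to (iv): specializing your bounds at $\alpha=\beta$ would only give $5$, not $\tfrac52$, so the "tightening" you allude to via the closed form \eqref{Walpha} is the whole content of that part and must actually be exhibited.
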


\begin{proof}
{\rm(i)} is obvious.\\
{\rm(ii)} This  claim follows from Theorems 3.2 and 3.5
and the fact that ${\rm G}_{i\rho}^{(\alpha,\beta)}\equiv1.$\\
{\rm(iii)}  From the proof of Theorem 3.2, we may rewrite the
product formula for ${\rm G}_\lambda^{(\alpha,\beta)}$ as follows:
\begin{equation*}
{\rm G}_\lambda^{(\alpha,\beta)}(x)\,{\rm
G}_\lambda^{(\alpha,\beta)}(y) =\int_{I_{x,y}}{\rm
G}_\lambda^{(\alpha,\beta)}(z)\,
\widetilde{\mathcal{K}}_{\alpha,\beta}(x,y,z)\,A_{\alpha,\beta}(|z|)\,dz\,
+\,\mathcal{I}_{\lambda,2}^{(\alpha,\beta)}(x,y),
\end{equation*}
where $\mathcal{I}_{\lambda,2}^{(\alpha,\beta)}$ is given by
\eqref{I-2} and
\begin{align*}
\widetilde{\mathcal{K}}_{\alpha,\beta}(x,y,z)
:\!&=M_{\alpha,\beta}\;\bigl|\,\sinh x\,\sinh y\,\sinh z\,\bigr|^{-2\alpha}\\
&\times\int_0^\pi
(1-\sigma_{x,y,z}^\chi+\sigma_{x,z,y}^\chi+\sigma_{z,y,x}^\chi)\,
g(x,y,z,\chi)_+^{\alpha-\beta-1}\, (\sin\chi)^{2\beta}\,d\chi.
\end{align*}
By \cite[Proposition 2.7]{BO}, we have
$$
\int_{I_{x,y}}|\,\widetilde{\mathcal{K}}_{\alpha,\beta}(x,y,z)|\,
A_{\alpha, \beta}(|z|)\,dz\leq4.
$$
On the other hand, using the product formula \eqref{formula prod1}
for the Jacobi functions, we may rewrite
$\mathcal{I}_{\lambda,2}^{(\alpha,\beta)}$ as follows :
\begin{align*}
&\mathcal{I}_{\lambda,2}^{(\alpha,\beta)}(x,y)
=\frac{\rho(\rho\!+\!i\lambda)}{8(\alpha\!+\!1)^2}\,\sinh2x\,\sinh2y\;
\varphi_\lambda^{(\alpha+1,\beta+1)}(x)\,\varphi_\lambda^{(\alpha+1,\beta+1)}(y)\\
&=\frac{\rho(\rho\!+\!i\lambda)}{8(\alpha\!+\!1)^2}
\sinh2x\sinh2y\!\int_{0}^{1}\!\!\int_{0}^{\pi}\!\!
\varphi_\lambda^{(\alpha+1,\beta+1)}(
\arg\cosh|\gamma(x,y,r,\psi)|)\,
dm_{\alpha+1,\beta+1}(r,\psi)\\
&=\frac{\rho}{4(\alpha\!+\!1)}
\sinh2x\sinh2y\!\int_{\,0}^{\ssf1}\!\!\int_{\,0}^{\ssf\pi}\!\!
\frac{{\rm G}_{\lambda,\text{\rm
o}}^{(\alpha,\beta)}(\arg\cosh|\gamma(x,y,r,\psi)|)}
{|\gamma(x,y,r,\psi)|\sqrt{|\gamma(x,y,r,\psi)|^2-1}}\,
dm_{\alpha+1,\beta+1}(r,\psi),
\end{align*}
where $\gamma(x,y,r,\psi)=\cosh x\cosh y+\sinh x\sinh
y\,re^{i\psi}.$ In order to conclude, it remains  for us to prove
the following inequality
$$
\frac{\rho}{4(\alpha\!+\!1)}\sinh2x\sinh2y\!\int_{0}^{1}\!\!\int_{0}^{\pi}\!\!\frac{dm_{\alpha+1,\beta+1}(r,\psi)}{|\gamma(x,y,r,\psi)|
\sqrt{|\gamma(x,y,r,\psi)|^2\!-\!1}}\leq
\frac{\Gamma(\alpha\!+\!1)}{\Gamma(\alpha\!+\!\frac12)}
\frac{\Gamma(\beta\!+\!\frac12)}{\Gamma(\beta\!+\!1)}.
$$
By expressing $|\gamma(x,y,r,\psi)|$ and $dm_{\alpha+1,\beta+1},$
the left hand side becomes
\begin{align*}
&\frac{\rho}{4(\alpha\!+\!1)}\sinh2x\,\sinh2y\!\int_{0}^{1}\!\!\int_{0}^{\pi}\!\!
\frac{dm_{\alpha+1,\beta+1}(r,\psi)}{|\gamma(x,y,r,\psi)|
\sqrt{|\gamma(x,y,r,\psi)|^2-1}}\\
&=\frac\rho{4(\alpha\!+\!1)}\frac{2\,\Gamma(\alpha\!+\!2)}
{\sqrt{\pi}\,\Gamma(\alpha\!-\!\beta)\,\Gamma(\beta\!+\!\frac32)}\,
\sinh2x\,\sinh2y\!\int_{\,0}^{\ssf1}\!\!\int_{\,0}^{\ssf\pi}\!
(1\!-\!r^2)^{\alpha-\beta-1}(r\sin\psi)^{2\beta+2}\\
&\times\,\frac1{\sqrt{(\cosh x\cosh y\!+\!r\cos\psi\sinh x\sinh
y)^2\!
+\!(r\sin\psi\sinh x\sinh y)^2}}\\
&\times\frac1{\sqrt{(\cosh x\cosh y\!+\!r\cos\psi\sinh x\sinh y)^2\!
+\!(r\sin\psi\sinh x\sinh y)^2\!-\!1}}\,r\,dr\,d\psi\\
&=\frac{\rho\,\Gamma(\alpha\!+\!1)}
{\sqrt{\pi}\,\Gamma(\alpha\!-\!\beta)\,\Gamma(\beta\!+\!\frac{3}{2})}
\int_{0}^{1}\!\!\int_{0}^{\pi}\!\!(1\!-\!r^2)^{\alpha-\beta-1}(r\sin\psi)^{2\beta+2}
\frac{dr\,d\psi}{\sqrt{U\!+\!\cos\psi}\sqrt{V\!+\!\cos\psi}},
\end{align*}
where
$$
U=\frac{\cosh^2\!x\cosh^2\!y+r^2\sinh^2\!x\sinh^2\!y} {2r\cosh
x\cosh y\sinh x\sinh y},
$$
and
$$
V=\frac{\cosh^2\!x\cosh^2\!y+r^2\sinh^2\!x\sinh^2\!y-1} {2r\cosh
x\cosh y\sinh x\sinh y}.
$$
Since
$$
U-1>V-1 =\frac{(\cosh x\cosh y-r\sinh x\sinh y)^2-1} {2r\cosh x\cosh
y \sinh x\sinh y}\geq0,
$$
we can estimate
\begin{align*}
&\frac{\rho}{\alpha\!+\!1}\sinh x\cosh x\sinh y\cosh y
\int_{0}^{1}\!\!\int_{0}^{\pi}\!\!
\frac{dm_{\alpha+1,\beta+1}(r,\psi)}
{|\gamma(x,y,r,\psi)|\sqrt{|\gamma(x,y,r,\psi)|^2-1}}\\
&\leq\frac{\rho\,\Gamma(\alpha\!+\!1)}
{\sqrt{\pi}\,\Gamma(\alpha\!-\!\beta)\,\Gamma(\beta\!+\!\frac{3}{2})}
\int_{0}^{1}\!\!\int_{0}^{\pi}\!
(1-r^2)^{\alpha-\beta-1}(r\sin\psi)^{2\beta+2}(1+\cos\psi)^{-1}dr d\psi\\
&=\frac{\rho}{2}\,
\frac{\Gamma(\alpha\!+\!1)}{\Gamma(\alpha\!+\!\frac{3}{2})}\,
\frac{\Gamma(\beta\!+\!\frac{1}{2})}{\Gamma(\beta\!+\!1)}
\leq\frac{\Gamma(\alpha\!+\!1)}{\Gamma(\alpha\!+\!\frac{1}{2})}\,
\frac{\Gamma(\beta\!+\!\frac{1}{2})}{\Gamma(\beta\!+\!1)}\,,
\end{align*}
using classical formulas for the Beta and Gamma functions.\\
{\rm(iv)} is proved in a similar way, using the product formula
\eqref{formula prod11} for $\varphi_\lambda^{(\alpha, \alpha)}$
instead of \eqref{formula prod1}.
\end{proof}

\begin{remark}
The measure \,$\mu_{\,x,y}^{(\alpha,\beta)}$ is not positive, for
any \,$\alpha\!\ge\!\beta\!>\!-\frac12$ and \,$x,y\!>\!0$. More
precisely, let us show that \,$\mathcal{K}_{\alpha,\beta}(x,y,z)<0$
if \,$-x\!-\!y<z<-|x\!-\!y|$, while
\,$\mathcal{K}_{\alpha,\beta}(x,y,z)>0$ if \,$|x\!-\!y|<z<x\!+\!y$.
In the limit case \,$\alpha\!=\!\beta$, our claim follows
immediately from the expression \eqref{Kalphaalpha}. Thus we may
restrict to the case \,$\alpha\!>\!\beta$. Assume first that
\,$-x\!-\!y<z<-|x\!-\!y|$ and let us split up
\begin{equation*}
\mathcal{K}_{\alpha,\beta}(x,y,z) =M_{\alpha,\beta}\, (\,\sinh
x\,\sinh y\,\sinh(-z))^{-2\alpha}
\Big[\mathcal{K}_{\alpha,\beta}^{(1)}(x,y,z)
+\mathcal{K}_{\alpha,\beta}^{(2)}(x,y,z)\Big],
\end{equation*}
where
$$
\mathcal{K}_{\alpha,\beta}^{(1)}(x,y,z)
=\!\int_{0}^{\pi}\!g(x,y,-z,\chi)_{+}^{\alpha-\beta-1}
\bigl(1\!-\sigma_{x,y,z}^{\chi}\!+\sigma_{x,z,y}^{\chi}\!+\sigma_{z,y,x}^{\chi}\bigr)
(\sin\chi)^{2\beta}\,d\chi
$$
and
$$
\mathcal{K}_{\alpha,\beta}^{(2)}(x,y,z)
=\frac{\rho}{\beta\!+\!1/2}\,\coth x\coth y\coth z
\int_{0}^{\pi}\!g(x,y,-z,\chi)_{+}^{\alpha-\beta-1}(\sin\chi)^{2\beta+2}\,d\chi\,.
$$
On one hand, \,$\coth x\,\coth y\,\coth z<-1$ and
\begin{equation}\label{positive}
\int_{0}^{\pi}g(x,y,-z,\chi)_{+}^{\alpha-\beta-1}\,(\sin\chi)^{2\beta+2}\,d\chi>0\,,
\end{equation}
as the change of variables \eqref{ChangeVariables} holds for
\,$\chi$ in an interval starting at \,$0$, where
$$
g(x,y,-z,\chi)=\sinh^2\!x\,\sinh^2\!y\,(1\!-\hspace{-.25mm}r^2)>0\,.
$$
Hence \,$\mathcal{K}_{\alpha,\beta}^{(2)}(x,y,z)<0$. On the other
hand, as
\begin{align*}\textstyle
\varrho^\chi(x,y,z) &\textstyle
=1-\sigma_{x,y,z}^{\chi}+\sigma_{x,z,y}^{\chi}+\sigma_{z,y,x}^{\chi}\\
&\textstyle =\frac1{\sinh x\sinh y\sinh z}\,
\bigl[\,\sinh x\sinh y\sinh z+\sinh x\cosh y\cosh z\\
&\textstyle\hspace{29mm}
+\cosh x\sinh y\cosh z-\cosh x\cosh y\sinh z\\
&\textstyle\hspace{29mm}
+\frac{\cos\chi}2(-\sinh2x\hspace{-.25mm}-\hspace{-.25mm}\sinh2y\hspace{-.25mm}+\hspace{-.25mm}\sinh2z)\,\bigr]
\end{align*}
is a decreasing function of \,$\chi$, we have
\begin{align*}\textstyle
\varrho^\chi(x,y,z) &\textstyle
\le\varrho^0(x,y,z)\\
&\textstyle =\frac4{\sinh x\sinh y\sinh z}\,
\sinh\frac{x+y+z}2\,\sinh\frac{-x+y+z}2\,\sinh\frac{x-y+z}2\,\cosh\frac{x+y-z}2\\
&\textstyle <0\,.
\end{align*}
Hence \,$\mathcal{K}_{\alpha,\beta}^{(2)}(x,y,z)<0$. When
\,$|x\!-\!y|<z<x+y$, the positivity of
\,$\mathcal{K}_{\alpha,\beta}(x,y,z)$ is proved along the same
lines. If \,$\sinh2z\le\sinh2x+\sinh2y$, we have now
$$
\varrho^\chi(x,y,z)\ge\varrho^0(x,y,z)>0
$$
while, if \,$\sinh2z\ge\sinh2x+\sinh2y$,
\begin{align*}\textstyle
\varrho^\chi(x,y,z) &\textstyle
\ge\varrho^\pi(x,y,z)\\
&\textstyle =\frac4{\sinh x\sinh y\sinh z}\,
\cosh\frac{x+y+z}2\,\cosh\frac{-x+y+z}2\,\cosh\frac{x-y+z}2\,\sinh\frac{x+y-z}2\\
&\textstyle
>0\,.
\end{align*}
\end{remark}

\section{Generalized translations and convolution product}

Let us denote by $\mathcal{C}_{c}(\R)$ the space of continuous
functions on $\mathbb{R}$ with compact support.

  Let
$\alpha\ge\beta\ge-\frac12$ with $\alpha>-\frac12$. The
Opdam--Cherednik transform is the Fourier transform in the
trigonometric Dunkl setting. It is defined for
$f\!\in\ssb\mathcal{C}_{c}(\R)$ by
\begin{equation}
\label{fourier} \mathcal{F}(f)(\lambda)=\int_{\R} f(x)\,{\rm
G}_\lambda^{(\alpha,\beta)}(-x)\,A_{\alpha,\beta}(|x|)\,dx
\qquad\forall\;\lambda\!\in\!\C
\end{equation}
and the inverse transform writes
\begin{equation*}
\mathcal{J}g(x)=\int_{\R} g(\lambda)\,{\rm
G}_\lambda^{(\alpha,\beta)}(x)\,
\Bigl(1-\frac{\rho}{i\lambda}\Bigr)\,
\frac{d\lambda}{8\pi\,|\c_{\alpha,\beta}(\lambda)|^2}\,.
\end{equation*}
Here $A_{\alpha,\beta}$ and $\c_{\alpha,\beta}$ are given by
\eqref{A-fun} and \eqref{cfunction}. See \cite{Op} for more details.

The Fourier transform $\mathcal{F}$ can be expressed in terms of the
Jacobi transform
\begin{equation}\label{jacobitrans}
\mathcal{F}_{\alpha,\beta}(f)(\lambda)\,=\int_{\,0}^{+\infty}\!
f(x)\,\varphi_{\lambda}^{(\alpha,\beta)}(x)\,A_{\alpha,\beta}(x)\,dx\,.
\end{equation}
More precisely:
\begin{lemma}
For   $\lambda\in\C$ and $f\in C_c(\R)$,  we have
$$
\mathcal{F}(f)(\lambda) =2\,\mathcal{F}_{\alpha,\beta}(f_{\text{\rm
e}})(\lambda)
+2\,(\rho\!+\!i\lambda)\,\mathcal{F}_{\alpha,\beta}(J\hspace{-.4mm}f_{\text{\rm
o}})(\lambda)\,,
$$
where $f_{\text{\rm e}}$ (resp. $f_{\text{\rm o}}$) denotes the even
(resp. odd) part of $f$, and
$$
J\hspace{-.4mm}f_{\text{\rm
o}}(x):=\int_{-\infty}^{\,x}\!f_{\text{\rm o}}(t)\,dt\,.
$$
\end{lemma}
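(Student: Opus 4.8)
The plan is to substitute the even/odd decomposition of ${\rm G}_\lambda^{(\alpha,\beta)}$ into the defining integral \eqref{fourier} and reduce everything to the half--line by parity. Recall that $\varphi_\lambda^{(\alpha,\beta)}$ is even, hence $\partial_x\varphi_\lambda^{(\alpha,\beta)}$ is odd, and that $A_{\alpha,\beta}(|x|)$ is even. Since ${\rm G}_\lambda^{(\alpha,\beta)}(-x)={\rm G}_{\lambda,\text{\rm e}}^{(\alpha,\beta)}(x)-{\rm G}_{\lambda,\text{\rm o}}^{(\alpha,\beta)}(x)$, writing $f=f_{\text{\rm e}}+f_{\text{\rm o}}$ and observing that the two cross products $f_{\text{\rm e}}\,{\rm G}_{\lambda,\text{\rm o}}^{(\alpha,\beta)}$ and $f_{\text{\rm o}}\,{\rm G}_{\lambda,\text{\rm e}}^{(\alpha,\beta)}$, multiplied by the even weight, are odd and hence integrate to $0$ over $\R$, while the other two are even, one gets, for $\lambda\neq-i\rho$ and using ${\rm G}_{\lambda,\text{\rm o}}^{(\alpha,\beta)}=-\tfrac1{\rho-i\lambda}\partial_x\varphi_\lambda^{(\alpha,\beta)}$,
$$
\mathcal{F}(f)(\lambda)=2\int_0^{+\infty}\!f_{\text{\rm e}}(x)\,\varphi_\lambda^{(\alpha,\beta)}(x)\,A_{\alpha,\beta}(x)\,dx+\frac2{\rho-i\lambda}\int_0^{+\infty}\!f_{\text{\rm o}}(x)\,\partial_x\varphi_\lambda^{(\alpha,\beta)}(x)\,A_{\alpha,\beta}(x)\,dx.
$$
By \eqref{jacobitrans} the first term is $2\,\mathcal{F}_{\alpha,\beta}(f_{\text{\rm e}})(\lambda)$, so it only remains to rewrite the second one in terms of $J\hspace{-.4mm}f_{\text{\rm o}}$.

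For this I would invoke the Jacobi differential equation for $\varphi_\lambda^{(\alpha,\beta)}$, which in self--adjoint form reads
$$
\tfrac{d}{dx}\bigl(A_{\alpha,\beta}(x)\,\partial_x\varphi_\lambda^{(\alpha,\beta)}(x)\bigr)=-(\lambda^2+\rho^2)\,A_{\alpha,\beta}(x)\,\varphi_\lambda^{(\alpha,\beta)}(x),
$$
together with the facts that $J\hspace{-.4mm}f_{\text{\rm o}}$ is an even $C^1$ function with the \emph{same} compact support as $f_{\text{\rm o}}$ (it vanishes for large $|x|$ since $f_{\text{\rm o}}$ is odd, hence of vanishing integral) and that $(J\hspace{-.4mm}f_{\text{\rm o}})'=f_{\text{\rm o}}$. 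An integration by parts on $[0,+\infty)$ gives
$$
\int_0^{+\infty}\!f_{\text{\rm o}}\,(\partial_x\varphi_\lambda^{(\alpha,\beta)})\,A_{\alpha,\beta}\,dx=\Bigl[J\hspace{-.4mm}f_{\text{\rm o}}\,A_{\alpha,\beta}\,\partial_x\varphi_\lambda^{(\alpha,\beta)}\Bigr]_0^{+\infty}-\int_0^{+\infty}\!J\hspace{-.4mm}f_{\text{\rm o}}\,\tfrac{d}{dx}\bigl(A_{\alpha,\beta}\,\partial_x\varphi_\lambda^{(\alpha,\beta)}\bigr)\,dx,
$$
where the boundary term vanishes: at $+\infty$ because $J\hspace{-.4mm}f_{\text{\rm o}}$ has compact support, and at $0$ because $A_{\alpha,\beta}(x)=\mathcal{O}(x^{2\alpha+1})$ and $\partial_x\varphi_\lambda^{(\alpha,\beta)}(x)=\mathcal{O}(x)$ as $x\to0^+$ (here $2\alpha+1>0$, and $\varphi_\lambda^{(\alpha,\beta)}$ is smooth and even so its derivative vanishes at $0$). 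Feeding the differential equation into the last integral, it equals $(\lambda^2+\rho^2)\int_0^{+\infty}J\hspace{-.4mm}f_{\text{\rm o}}\,\varphi_\lambda^{(\alpha,\beta)}\,A_{\alpha,\beta}\,dx=(\lambda^2+\rho^2)\,\mathcal{F}_{\alpha,\beta}(J\hspace{-.4mm}f_{\text{\rm o}})(\lambda)$.

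Combining the two steps and using $\lambda^2+\rho^2=(\rho-i\lambda)(\rho+i\lambda)$ yields the claimed identity for every $\lambda\neq-i\rho$. Finally, since $f\in\mathcal{C}_c(\R)$, each of the three terms is an entire function of $\lambda$ (the two Jacobi transforms are even holomorphic functions of $\lambda$, and $\mathcal{F}(f)$ inherits the entirety of $\lambda\mapsto{\rm G}_\lambda^{(\alpha,\beta)}(x)$, visible from the second line of \eqref{Gfunction}), so the identity extends to $\lambda=-i\rho$ by analytic continuation. The one point that needs care is the legitimacy of the integration by parts near $x=0$; this is exactly where the vanishing of $A_{\alpha,\beta}$ to positive order $2\alpha+1$ is used to absorb the $\coth x$ singularity of the Jacobi operator, and everything else is just bookkeeping of parities.
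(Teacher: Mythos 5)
Your argument is correct and follows essentially the same route as the paper: split $f$ and ${\rm G}_\lambda^{(\alpha,\beta)}$ into even and odd parts, reduce to the half--line by parity, and convert $\int_0^{+\infty}f_{\text{\rm o}}\,\partial_x\varphi_\lambda^{(\alpha,\beta)}\,A_{\alpha,\beta}\,dx$ into $(\rho^2+\lambda^2)\,\mathcal{F}_{\alpha,\beta}(J\hspace{-.4mm}f_{\text{\rm o}})(\lambda)$ by integration by parts together with the Jacobi differential equation. Your treatment of the exceptional value $\lambda=-i\rho$ by analytic continuation, and your explicit verification of the boundary terms at $0$ and $+\infty$, are if anything more careful than the paper's own.
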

\begin{proof}
Write $f=f_{\text{\rm e}}+f_{\text{\rm o}}$\,. Firstly, if
\,$\lambda=- i\rho$\,, then
$$
\mathcal{F}(f)(\lambda) =\int_{\R}f(x)\,A_{\alpha,\beta}(|x|)\,dx
=2\,\mathcal{F}_{\alpha,\beta}(f_{\text{\rm e}})(i\rho)\,.
$$
Secondly, if \,$\lambda\!\ne\!-i\rho$\,, we have
$$
\mathcal{F}(f)(\lambda) =2\,\mathcal{F}_{\alpha,\beta}(f_{\text{\rm
e}})(\lambda)
+\frac{2}{\rho\!-\!i\lambda}\int_{\,0}^{+\infty}\!f_{\text{\rm
o}}(x)\, \frac{\partial}{\partial
x}\varphi_{\lambda}^{(\alpha,\beta)}(x)\, A_{\alpha,\beta}(x)\,dx\,.
$$
Recall the Jacobi operator
$$
\Delta_{\alpha,\beta}
=\frac1{A_{\alpha,\beta}(x)}\frac{\partial}{\partial x}
\Bigl[A_{\alpha,\beta}(x)\frac{\partial}{\partial x}\Bigr]
=\frac{\partial^2}{\partial x^2} +\Bigl[(2\alpha\!+\!1)\coth
x+(2\beta\!+\!1)\tanh x\Bigr] \frac{\partial}{\partial x}\,.
$$
By integration by parts, we obtain
\begin{align*}
&\int_{\,0}^{+\infty}\!f_{\text{\rm o}}(x)\,
\frac{\partial}{\partial x}\varphi_{\lambda}^{(\alpha,\beta)}(x)\,
A_{\alpha,\beta}(x)\,dx\\
&=-\int_{\,0}^{+\infty}\!\varphi_{\lambda}^{(\alpha,\beta)}(x)\,
\frac1{A_{\alpha,\beta}(x)}\,\frac{\partial}{\partial x}
\Bigl[A_{\alpha,\beta}(x)\,\frac{\partial}{\partial
x}\,J\hspace{-.4mm}f_{\text{\rm o}}(x)\Bigr]
A_{\alpha,\beta}(x)\,dx\\
&=-\,\mathcal{F}_{\alpha,\beta}\ssf (\ssf\Delta_{\alpha,\beta}\ssf
J\hspace{-.4mm}f_{\text{\rm o}})(\lambda)
=(\rho^2\!+\hspace{-.4mm}\lambda^2)\,
\mathcal{F}_{\alpha,\beta}\ssf(J\hspace{-.4mm}f_{\text{\rm
o}})(\lambda)\,.
\end{align*}
\end{proof}
The following Plancherel formula was proved by Opdam \cite[Theorem
9.13(3)]{Op}\,:
\begin{align*}
\int_{\R}|f(x)|^2A_{\alpha,\beta}(|x|)\,dx &=\int_{\,0}^{+\infty}\!
\bigl(\,|\mathcal{F}(f)(\lambda)|^2+|\mathcal{F}(\check{f})(\lambda)|^2\,\bigr)\,
\frac{d\lambda}{16\pi\,|\c_{\alpha,\beta}(\lambda)|^2}\\
&=\int_{\R}\mathcal{F}(f)(\lambda)\,
\overline{\mathcal{F}(\check{f})(-\lambda)}\,
(1-\frac{\rho}{i\lambda})\,
\frac{d\lambda}{8\pi\,|\c_{\alpha,\beta}(\lambda)|^2}\,,
\end{align*}
where $\check{f}(x):=f(-x)$. The following  result is obtained by
specializing \cite[Theorem 4.1]{Sch}.

\begin{theorem}
The Opdam--Cherednik transform \,$\mathcal{F}$ and its inverse
\,$\mathcal{J}$ are topological isomorphisms between the Schwartz
space \,$\mathcal{S}_{\alpha,\beta}(\R)=(\cosh
x)^{-\rho}\,\mathcal{S}(\R)$ and the Schwartz space
\,$\mathcal{S}(\R)$. Recall that
\,$\rho\!=\!\alpha\!+\!\beta\!+\!1$.
\end{theorem}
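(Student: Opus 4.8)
The plan is to deduce the statement by specializing to the rank-one case the general Schwartz-space isomorphism theorem of Schapira \cite[Theorem 4.1]{Sch}, valid for the Opdam--Cherednik transform attached to an arbitrary root system with arbitrary non-negative multiplicities. The first step is to spell out the dictionary. As recalled in the Introduction, ${\rm T}^{(\alpha,\beta)}$ is Cherednik's operator ${\rm T}(k_1,k_2)$ for the root system $BC_1$ (which degenerates to $A_1$ when $\beta=-\frac12$, i.e.\ $k_2=0$), with multiplicities $k_1=\alpha-\beta\ge0$ and $k_2=\beta+\frac12\ge0$; accordingly $\rho=k_1+2\ssf k_2=\alpha+\beta+1$ is the half-sum of positive roots, the function $\c_{\alpha,\beta}$ of \eqref{cfunction} is Opdam's $c$-function, ${\rm G}_\lambda^{(\alpha,\beta)}$ is Opdam's kernel, and $\mathcal{F}$, $\mathcal{J}$ of \eqref{fourier} are exactly the transform and its inverse studied in \cite{Op}. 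In particular the density weight occurring in \cite{Sch} coincides, up to a positive constant, with $A_{\alpha,\beta}(|x|)$ of \eqref{A-fun}, which is $\sim\const\,e^{2\rho|x|}$ as $|x|\to+\infty$.

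The second step is to translate the two Schwartz spaces. On the spatial side, Schapira's Schwartz space consists of the smooth functions $f$ on $\R$ such that $x\mapsto(1+|x|)^N\,e^{\rho|x|}\,\partial^m\ssb f(x)$ is bounded for all $N,m\in\N$; one may equally well replace the operators $\partial^m$ by the differential--difference operators generated by ${\rm T}^{(\alpha,\beta)}$, or the weight $e^{\rho|x|}$ by $\varphi_0^{(\alpha,\beta)}(x)^{-1}$ (which differs from it only by a polynomial factor, by \eqref{Jacobi0Asymptotic}), all of these choices defining the same Fr\'echet topology. Since $e^{\rho|x|}\asymp(\cosh x)^\rho$ --- with no singularity at the origin, in contrast with $A_{\alpha,\beta}(|x|)^{1/2}$ --- this space is precisely $\mathcal{S}_{\alpha,\beta}(\R)=(\cosh x)^{-\rho}\,\mathcal{S}(\R)$ with its natural topology. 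On the spectral side, Schapira's theorem identifies the image with $\mathcal{S}(\R)$ in the variable $\lambda\in\R$. Granting \cite[Theorem 4.1]{Sch}, it then only remains to observe that it applies verbatim when the rank is one, so that $\mathcal{F}$ is a topological isomorphism $\mathcal{S}_{\alpha,\beta}(\R)\to\mathcal{S}(\R)$; since $\mathcal{J}\circ\mathcal{F}=\mathrm{id}$ on the dense subspace $\mathcal{C}_c(\R)$ by Opdam's inversion formula, $\mathcal{J}=\mathcal{F}^{-1}$ on $\mathcal{S}(\R)$ and is therefore a topological isomorphism as well.

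I do not expect the difficulty to be analytic, but rather a matter of bookkeeping: one has to reconcile three coexisting sets of conventions --- Cherednik's (the $1-e^{-2x}$, $1-e^{-4x}$ normalisation used to introduce ${\rm T}(k_1,k_2)$), Opdam's (underlying $\mathcal{F}$, $\mathcal{J}$ and the Plancherel density $\tfrac{d\lambda}{8\pi|\c_{\alpha,\beta}(\lambda)|^2}$) and Schapira's --- and to verify that the intervening rescalings $x\leftrightarrow2x$ and the normalising constants do not alter the relevant Fr\'echet topologies.

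Finally, a self-contained alternative (also a useful check) is to bootstrap from the scalar case. By the classical Schwartz-space isomorphism for the Jacobi transform (see \cite{K2} and \cite{FK2}), $\mathcal{F}_{\alpha,\beta}$ of \eqref{jacobitrans} is a topological isomorphism from the even subspace of $(\cosh x)^{-\rho}\,\mathcal{S}(\R)$ onto the even subspace of $\mathcal{S}(\R)$. Given $f\in\mathcal{S}_{\alpha,\beta}(\R)$, one writes $f=f_{\text{\rm e}}+f_{\text{\rm o}}$ and invokes the Lemma preceding the theorem, $\mathcal{F}(f)=2\ssf\mathcal{F}_{\alpha,\beta}(f_{\text{\rm e}})+2\ssf(\rho\!+\!i\lambda)\ssf\mathcal{F}_{\alpha,\beta}(J\hspace{-.4mm}f_{\text{\rm o}})$, which holds on $\mathcal{C}_c(\R)$ and extends to $\mathcal{S}_{\alpha,\beta}(\R)$ by continuity. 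One checks that $f_{\text{\rm o}}\mapsto J\hspace{-.4mm}f_{\text{\rm o}}$ maps the odd part of $\mathcal{S}_{\alpha,\beta}(\R)$ continuously onto its even part, with continuous inverse $J\hspace{-.4mm}f_{\text{\rm o}}\mapsto(J\hspace{-.4mm}f_{\text{\rm o}})'$ (integrating an exponentially decaying, rapidly decreasing odd function from $-\infty$ produces an even function of the same type, since its total integral vanishes), and that multiplication by $\rho\!+\!i\lambda$, together with division of an odd Schwartz function by $\lambda$, interchanges the even and odd Schwartz spaces in $\lambda$. Reading off even and odd parts then exhibits $\mathcal{F}$ as a continuous bijection $\mathcal{S}_{\alpha,\beta}(\R)\to\mathcal{S}(\R)$; the open mapping theorem for Fr\'echet spaces (or, equivalently, the explicit inversion formula for $\mathcal{J}$ together with Opdam's Plancherel formula recalled above) upgrades it to a topological isomorphism, with $\mathcal{J}=\mathcal{F}^{-1}$.
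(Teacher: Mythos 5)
Your main argument---specializing Schapira's Theorem 4.1 to the rank-one $BC_1$ setting, with $k_1=\alpha-\beta$, $k_2=\beta+\frac12$ and $\rho=k_1+2k_2$---is exactly the paper's proof, which in fact consists of nothing more than the one-line citation of \cite[Theorem 4.1]{Sch}, so your dictionary of conventions is already more detailed than the original. The self-contained alternative you sketch (bootstrapping from the Jacobi-transform isomorphism on even functions via $\mathcal{F}(f)=2\,\mathcal{F}_{\alpha,\beta}(f_{\text{\rm e}})+2\,(\rho+i\lambda)\,\mathcal{F}_{\alpha,\beta}(Jf_{\text{\rm o}})$) goes beyond the paper and is a reasonable independent check, though it is not needed.
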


Let us denote by $\mathcal{C}_{b}(\R)$ the space of bounded
continuous functions on $\mathbb{R}$.
\begin{definition}
Let \,$x\!\in\!\R$ and let \,$f\in \mathcal{C}_{b}(\R)$. For
\,$\alpha\!\ge\!\beta\!\ge\!-\frac12$ with
\,$\alpha\!\ne\!-\frac12$, we define the generalized translation
operator \,$\tau_{\,x}^{(\alpha,\beta)}$ by
$$
\tau_{\,x}^{(\alpha,\beta)}f(y)\,=\int_{\R}f(z)\,d\mu_{\,x,y}^{(\alpha,\beta)}(z)\,,
$$
where \,$d\mu_{x,y}^{(\alpha, \beta)}$ is given by \eqref{measure1}
for \,$\alpha\!>\!\beta$, and by \eqref{measure2} for
\,$\alpha\!=\!\beta$.
\end{definition}
The following properties are clear. However for completeness we will
sketch their proof.

\begin{proposition}${}$
Let $\alpha\!\ge\!\beta\!\ge\!-\frac12$ with
\,$\alpha\!\ne\!-\frac12$,  \,$x,\; \!y\in\!\R$ and  \,$f\in
\mathcal{C}_{b}(\R)$. Then
\begin{itemize}
\item[(i)]
$\tau_{\,x}^{(\alpha,\beta)}f(y) =\tau_{\,y}^{(\alpha,\beta)}f(x)$.
\item[(ii)]
$\tau_{\,0}^{(\alpha,\beta)}f=f$.
\item[(iii)]
$\tau_{\,x}^{(\alpha,\beta)}\tau_{\,y}^{(\alpha,\beta)}
=\tau_{\,y}^{(\alpha,\beta)}\tau_{\,x}^{(\alpha,\beta)}$.
\item[(iv)]
$\tau_{\,x}^{(\alpha,\beta)}{\rm G}_\lambda^{(\alpha,\beta)}(y)
={\rm G}_\lambda^{(\alpha,\beta)}(x)\,{\rm
G}_\lambda^{(\alpha,\beta)}(y)$.

 If we suppose also that $f$ belongs
to $ \mathcal{C}_{c}(\R)$, then
\item[(v)]
$\mathcal{F}(\tau_{\,x}^{(\alpha,\beta)}f)(\lambda) ={\rm
G}_\lambda^{(\alpha,\beta)}(x)\,\mathcal{F}(f)(\lambda)$.
\item[(vi)]
${\rm T}^{(\alpha,\beta)}\,\tau_{\,x}^{(\alpha,\beta)}
=\tau_{\,x}^{(\alpha,\beta)}\,{\rm T}^{(\alpha,\beta)}$.
\end{itemize}
\end{proposition}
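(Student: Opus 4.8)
The plan is to treat the six items in increasing order of difficulty, the substantive one being (v), from which (iii) and (vi) will be deduced by injectivity of the Opdam--Cherednik transform. Throughout one uses that every $\mu_{\,x,y}^{(\alpha,\beta)}$ is supported in the compact set $I_{x,y}$ (Theorem~\ref{mu}(i)) and that $\|\mu_{\,x,y}^{(\alpha,\beta)}\|$ is bounded uniformly in $x,y$ (Theorem~\ref{mu}(iii)--(iv)), so that $\tau_x^{(\alpha,\beta)}$ is a bounded operator on $\mathcal{C}_b(\R)$ which maps $\mathcal{C}_c(\R)$ into itself. Item (ii) is immediate since $d\mu_{\,0,y}^{(\alpha,\beta)}=d\delta_y$; item (iv) is nothing but the product formula of Theorems~\ref{productalpha>beta} and \ref{productformulaalpha=beta} read through the definition of $\tau_x^{(\alpha,\beta)}$. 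For (i), when $xy\ne0$ the first symmetry in Remark~\ref{symmetry} gives $d\mu_{\,x,y}^{(\alpha,\beta)}=d\mu_{\,y,x}^{(\alpha,\beta)}$, hence $\tau_x^{(\alpha,\beta)}f(y)=\tau_y^{(\alpha,\beta)}f(x)$, and the degenerate cases $x=0$ or $y=0$ are checked directly, both sides reducing to a value of $f$.

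For the key step (v), I would fix $f\in\mathcal{C}_c(\R)$ and $x\in\R^*$ (the case $x=0$ being (ii)), expand $\mathcal{F}(\tau_x^{(\alpha,\beta)}f)(\lambda)$ using \eqref{fourier} and the definition of $\tau_x^{(\alpha,\beta)}$, and apply Fubini — legitimate because the $z$-integral runs over $\supp f$, the kernel has total mass $\le\const$, and the surviving range of $y$ is compact. This produces
\[
\mathcal{F}(\tau_x^{(\alpha,\beta)}f)(\lambda)
=\int_\R f(z)\Bigl\{\int_\R{\rm G}_\lambda^{(\alpha,\beta)}(-y)\,
\mathcal{K}_{\alpha,\beta}(x,y,z)\,A_{\alpha,\beta}(|y|)\,dy\Bigr\}A_{\alpha,\beta}(|z|)\,dz.
\]
The crucial observation is that the right symmetry to invoke is the third one in Remark~\ref{symmetry}, $\mathcal{K}_{\alpha,\beta}(x,y,z)=\mathcal{K}_{\alpha,\beta}(x,-z,-y)$: after the substitution $y\mapsto-y$ the inner integral becomes $\int_\R{\rm G}_\lambda^{(\alpha,\beta)}(y)\,\mathcal{K}_{\alpha,\beta}(x,-z,y)\,A_{\alpha,\beta}(|y|)\,dy$, which is precisely the product formula evaluated at the pair $(x,-z)$ and therefore equals ${\rm G}_\lambda^{(\alpha,\beta)}(x)\,{\rm G}_\lambda^{(\alpha,\beta)}(-z)$. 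Reinserting this and comparing with \eqref{fourier} gives $\mathcal{F}(\tau_x^{(\alpha,\beta)}f)(\lambda)={\rm G}_\lambda^{(\alpha,\beta)}(x)\,\mathcal{F}(f)(\lambda)$.

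Items (iii) and (vi) then follow from (v). Iterating (v) gives $\mathcal{F}(\tau_x^{(\alpha,\beta)}\tau_y^{(\alpha,\beta)}f)(\lambda)={\rm G}_\lambda^{(\alpha,\beta)}(x)\,{\rm G}_\lambda^{(\alpha,\beta)}(y)\,\mathcal{F}(f)(\lambda)$, which is visibly symmetric in $x$ and $y$; since $\mathcal{F}$ is injective (Plancherel formula, or the isomorphism onto $\mathcal{S}(\R)$ after reducing to smooth compactly supported $f$ by mollification) this yields $\tau_x^{(\alpha,\beta)}\tau_y^{(\alpha,\beta)}f=\tau_y^{(\alpha,\beta)}\tau_x^{(\alpha,\beta)}f$ for $f\in\mathcal{C}_c(\R)$, and the general case $f\in\mathcal{C}_b(\R)$ follows by approximating $f$ pointwise by $f_n\in\mathcal{C}_c(\R)$ with $|f_n|\le\|f\|_\infty$ and passing to the limit by dominated convergence, all the measures involved having compact support. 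For (vi), I would use that $\mathcal{F}$ conjugates ${\rm T}^{(\alpha,\beta)}$ into multiplication by $i\lambda$ — this follows from \eqref{eigen} by an integration by parts, cf.\ \cite{Op} — so that, combined with (v), $\mathcal{F}({\rm T}^{(\alpha,\beta)}\tau_x^{(\alpha,\beta)}f)(\lambda)=i\lambda\,{\rm G}_\lambda^{(\alpha,\beta)}(x)\,\mathcal{F}(f)(\lambda)=\mathcal{F}(\tau_x^{(\alpha,\beta)}{\rm T}^{(\alpha,\beta)}f)(\lambda)$ on $\mathcal{S}_{\alpha,\beta}(\R)$, and injectivity of $\mathcal{F}$ finishes.

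I expect the genuine obstacle to be the identification inside (v): the Fubini bookkeeping is routine, but one must single out the symmetry $\mathcal{K}_{\alpha,\beta}(x,y,z)=\mathcal{K}_{\alpha,\beta}(x,-z,-y)$ rather than the naive $x\leftrightarrow y$ one, so that the change $y\mapsto-y$ collapses the inner integral to an instance of the product formula for the pair $(x,-z)$. A secondary, purely technical difficulty is that the Proposition is stated for $f\in\mathcal{C}_b(\R)$ while (vi) presupposes differentiability and the Fourier-side arguments are cleanest on $\mathcal{C}_c(\R)$ or $\mathcal{S}_{\alpha,\beta}(\R)$; bridging that gap is exactly where the uniform bound on $\|\mu_{\,x,y}^{(\alpha,\beta)}\|$ from Theorem~\ref{mu} and a dominated-convergence argument come in.
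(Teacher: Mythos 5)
Your proposal is correct and, on the decisive point, identical to the paper's argument: for (v) the paper performs exactly the same Fubini interchange and then invokes the symmetry $\mathcal{K}_{\alpha,\beta}(x,y,z)=\mathcal{K}_{\alpha,\beta}(x,-z,-y)$ to recognize the inner integral as the product formula for the pair $(x,-z)$; items (i), (ii), (iv) and (vi) are also handled exactly as you do (for (vi) the paper likewise compares the two Fourier transforms $i\lambda\,{\rm G}_\lambda^{(\alpha,\beta)}(x)\,\mathcal{F}(f)(\lambda)$ and concludes by injectivity of $\mathcal{F}$). The one genuine divergence is (iii): the paper argues directly on the kernel side, observing that
$H(x_1,y_1,x_2,y_2)=\int_{\R}\mathcal{K}_{\alpha,\beta}(x_1,y_1,z)\,\mathcal{K}_{\alpha,\beta}(x_2,y_2,z)\,A_{\alpha,\beta}(|z|)\,dz$
is symmetric in its four variables, so that the kernels of $\tau_x\tau_y$ and $\tau_y\tau_x$ coincide; you instead iterate (v) and use injectivity of $\mathcal{F}$, then pass from $\mathcal{C}_c(\R)$ to $\mathcal{C}_b(\R)$ by dominated convergence. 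Your route costs an extra density step but replaces the unproved four-fold symmetry of $H$ (which the paper merely asserts) by the already-established product formula, so it is arguably the more self-contained of the two; both are valid.
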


\begin{proof}
{\rm(i)} follows from the property
$\mathcal K_{\alpha,\beta}(x,y,z)=\mathcal K_{\alpha,\beta}(y,x,z)$.\\
{\rm(ii)} follows from the fact that
$\mathcal K_{\alpha,\beta}(0,y,z)=\delta_y(z).$\\
{\rm(iii)} follows from the fact that the function
$$
H(x_1,y_1,x_2,y_2):=\int_{\R} \mathcal K_{\alpha,\beta}(x_1,y_1,z)\,
\mathcal K_{\alpha,\beta}(x_2,y_2,z)\, A_{\alpha,\beta}(|z|)\,dz
$$
is symmetric in the four variables.\\
{\rm(iv)} follows from  the product formula for ${\rm G}_\lambda^{(\alpha,\beta)}.$\\
{\rm(v)} For $f\in\mathcal{C}_c(\R),$ we have
\begin{align*}
\mathcal{F}(\tau_{\,x}^{(\alpha,\beta)}f)(\lambda)
&=\int_{\R}\tau_{\,x}^{(\alpha,\beta)}f(y)\,
{\rm G}_\lambda^{(\alpha,\beta)}(-y)\,A_{\alpha,\beta}(|y|)\,dy\\
&=\int_{\R}\Bigl[\,\int_{\R}
f(z)\,\mathcal{K}_{\alpha,\beta}(x,y,z)\,A_{\alpha,\beta}(|z|)\,dz\Bigr]
{\rm G}_\lambda^{(\alpha,\beta)}(-y)\,A_{\alpha,\beta}(|y|)\,dy\\
&=\int_{\R}f(z)\Bigl[\,\int_{\R}{\rm
G}_\lambda^{(\alpha,\beta)}(-y)\,
\mathcal{K}_{\alpha,\beta}(x,y,z)\,A_{\alpha,\beta}(|y|)\,dy\Bigr]
A_{\alpha,\beta}(|z|)\,dz\,.
\end{align*}
Since
$\mathcal{K}_{\alpha,\beta}(x,y,z)=\mathcal{K}_{\alpha,\beta}(x,-z,-y)$,
it follows from the product formula that
\begin{align*}
\mathcal{F}(\tau_{\,x}^{(\alpha,\beta)}f)(\lambda) &={\rm
G}_\lambda^{(\alpha,\beta)}(x)
\int_{\R}f(z)\,{\rm G}_\lambda^{(\alpha,\beta)}(-z)\,A_{\alpha,\beta}(|z|)\,dz\\
&={\rm G}_\lambda^{(\alpha,\beta)}(x)\,\mathcal{F}(f)(\lambda).
\end{align*}
{\rm(vi)} This property follows from the injectivity of
$\mathcal{F}$ and the fact that $\tau_{\,x}^{(\alpha,\beta)}({\rm
T}^{(\alpha,\beta)}f)$ and ${\rm
T}^{(\alpha,\beta)}(\tau_{\,x}^{(\alpha,\beta)}f)$ have the same
Fourier transform, namely
$$
\lambda\longmapsto i\,\lambda\, {\rm
G}_\lambda^{(\alpha,\beta)}(x)\,\mathcal{F}(f)(\lambda)\,.
$$
\end{proof}

\begin{remark}
Generalized translations in the Dunkl setting were first introduced
by Trim\`eche, using transmutation operators. This approach is
resumed in \cite{M}, which deals with a generalization of Dunkl
analysis in dimension $1$.
\end{remark}

\begin{lemma}
Let $1\leq p\leq\infty$, $f\in L^p(\R,A_{\alpha,\beta}(|z|)dz)$ and
$x\in\R.$ Then
\begin{equation}\label{ingtrans}
\|\tau_{\,x}^{(\alpha,\beta)}f\|_p\leq C_{\alpha,\beta}\,\|f\|_p\,,
\end{equation}
where
\begin{equation}\label{cst}
C_{\alpha,\beta}=\begin{cases}
\,4+\frac{\Gamma(\alpha+1)}{\Gamma(\alpha+\frac{1}{2})}\,
\frac{\Gamma(\beta+\frac{1}{2})}{\Gamma(\beta+1)}
&\text{if \;}\alpha>\beta>-\frac12\,,\\
\qquad\frac52 &\text{if \;}\alpha=\beta>-\frac12\,.
\end{cases}\end{equation}
\end{lemma}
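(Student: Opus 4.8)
The plan is to deduce \eqref{ingtrans} from the total-mass estimate $\|\mu_{x,y}^{(\alpha,\beta)}\|\le C_{\alpha,\beta}$ of Theorem \ref{mu}(iii)--(iv), together with the symmetry $\mathcal{K}_{\alpha,\beta}(x,y,z)=\mathcal{K}_{\alpha,\beta}(x,-z,-y)$ from Remark \ref{symmetry} (valid for $\mathcal{K}_{\alpha,\alpha}$ as well). It suffices to treat $f\in\mathcal{C}_c(\R)$ and to argue by density afterwards when $p<\infty$. The case $x=0$ is trivial, since $\tau_0^{(\alpha,\beta)}f=f$ and $C_{\alpha,\beta}\ge1$, so I assume $x\neq0$; then $\mu_{x,y}^{(\alpha,\beta)}$ has density $\mathcal{K}_{\alpha,\beta}(x,y,\cdot)\,A_{\alpha,\beta}(|\cdot|)$ for every $y\neq0$, the single point $y=0$ being negligible for $A_{\alpha,\beta}(|y|)\,dy$.

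For $p=\infty$ one has, pointwise, $|\tau_x^{(\alpha,\beta)}f(y)|\le\|f\|_\infty\,\|\mu_{x,y}^{(\alpha,\beta)}\|\le C_{\alpha,\beta}\,\|f\|_\infty$. For $1\le p<\infty$, with $q$ the conjugate exponent, I would apply H\"older's inequality with respect to the positive finite measure $|\mu_{x,y}^{(\alpha,\beta)}|$ to get
\begin{equation*}
|\tau_x^{(\alpha,\beta)}f(y)|^p\le\|\mu_{x,y}^{(\alpha,\beta)}\|^{p/q}\!\int_\R|f(z)|^p\,d|\mu_{x,y}^{(\alpha,\beta)}|(z)\le C_{\alpha,\beta}^{\,p-1}\!\int_\R|f(z)|^p\,|\mathcal{K}_{\alpha,\beta}(x,y,z)|\,A_{\alpha,\beta}(|z|)\,dz,
\end{equation*}
then integrate in $y$ against $A_{\alpha,\beta}(|y|)\,dy$ and interchange the two integrations by Tonelli, obtaining
\begin{equation*}
\|\tau_x^{(\alpha,\beta)}f\|_p^p\le C_{\alpha,\beta}^{\,p-1}\int_\R|f(z)|^p\Bigl[\int_\R|\mathcal{K}_{\alpha,\beta}(x,y,z)|\,A_{\alpha,\beta}(|y|)\,dy\Bigr]A_{\alpha,\beta}(|z|)\,dz.
\end{equation*}
By the symmetry $\mathcal{K}_{\alpha,\beta}(x,y,z)=\mathcal{K}_{\alpha,\beta}(x,-z,-y)$ and the substitution $y\mapsto-y$ (using that $A_{\alpha,\beta}(|\cdot|)$ is even), the inner bracket equals $\int_\R|\mathcal{K}_{\alpha,\beta}(x,-z,y)|\,A_{\alpha,\beta}(|y|)\,dy=\|\mu_{x,-z}^{(\alpha,\beta)}\|\le C_{\alpha,\beta}$ for almost every $z$, whence $\|\tau_x^{(\alpha,\beta)}f\|_p^p\le C_{\alpha,\beta}^{\,p}\,\|f\|_p^p$. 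Equivalently, once $p=1$ and $p=\infty$ are known, the intermediate exponents follow from the Riesz--Thorin interpolation theorem applied to the linear operator $\tau_x^{(\alpha,\beta)}$ on $L^p(\R,A_{\alpha,\beta}(|x|)\,dx)$.

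I do not expect a real obstacle: the crucial input --- the uniform bound on the total mass of $\mathcal{K}_{\alpha,\beta}$ --- is exactly Theorem \ref{mu}, and the rest is the H\"older--Tonelli manipulation above. The only points deserving a line of care are the use of Tonelli (legitimate, the integrand being nonnegative after replacing $f$ by $|f|$), the handling of the degenerate points $x=0$, $y=0$, $z=0$ (all disposed of above), and the routine density argument extending $\tau_x^{(\alpha,\beta)}$ from $\mathcal{C}_c(\R)$ to $L^p(\R,A_{\alpha,\beta}(|x|)\,dx)$ for $p<\infty$.
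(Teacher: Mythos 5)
Your argument is correct and is essentially the paper's own proof: the authors likewise treat $p=1,\infty$ as elementary consequences of the total-mass bound in Theorem \ref{mu} and obtain $1<p<\infty$ by the same H\"older--Tonelli computation (or by interpolation). Your explicit use of the symmetry $\mathcal{K}_{\alpha,\beta}(x,y,z)=\mathcal{K}_{\alpha,\beta}(x,-z,-y)$ to bound the integral in $y$ is a detail the paper leaves implicit, but it is the same argument.
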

\begin{proof}
The inequality \eqref{ingtrans} follows from Theorem \ref{mu}. More
precisely, the cases $p=1$ and $p=\infty$ are elementary, while the
intermediate case $1<p<\infty$ is obtained by interpolation or by
using H\"older's inequality, as follows\,:
\begin{align*}
\|\tau_{\,x}^{(\alpha,\beta)}f\|_p^p
&\le\Bigl(\int_{\R}|\mathcal{K}_{\alpha,\beta}(x,y,z)|\,
A_{\alpha,\beta}(|z|)\,dz\Bigr)^{p-1}\\
&\times\int_{\R}\int_{\R}
|\mathcal{K}_{\alpha,\beta}(x,y,z)|\,|f(z)|^p\,
A_{\alpha,\beta}(|z|)\,A_{\alpha,\beta}(|y|)\,dz\,dy\\
&\leq C_{\alpha,\beta}^{\,p}\,\|f\|_p^p\,.
\end{align*}
\end{proof}

\begin{definition}
The convolution product of suitable functions \,$f$ and \,$g$ is
defined by
$$
(f\ast_{\alpha,\beta}g)(x)=\int_{\R}
\tau_{\,x}^{(\alpha,\beta)}f(-y)\,g(y)\,A_{\alpha,\beta}(|y|)\,dy\,.
$$
\end{definition}
\begin{remark}
It is clear that this convolution product is both commutative and
associative:
\begin{itemize}
\item[(i)] $f\ast_{\alpha,\beta}g=g\ast_{\alpha,\beta}f.$
\item[(ii)] $(f\ast_{\alpha,\beta}g)\ast_{\alpha,\beta}h=f\ast_{\alpha,\beta}(g\ast_{\alpha,\beta}h).$
\end{itemize}
\end{remark}

For every $a\!>\!0$, let us denote by $\mathcal{D}_a(\R)$ the space
of smooth functions on $\R$ which are supported in \,$[-a,a\,]$\,.

\begin{proposition}
Let \,$f\!\in\!\mathcal{D}_a(\R)$ and \,$g\!\in\!\mathcal{D}_b(\R)$.
Then \,$f\ast_{\alpha,\beta}g\in\mathcal{D}_{a+b}(\R)$ and
$$
\mathcal{F}(f\ast_{\alpha,\beta}g)(\lambda)
=\mathcal{F}(f)(\lambda)\,\mathcal{F}(g)(\lambda)\,.
$$
\end{proposition}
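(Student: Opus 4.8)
The plan is to establish the support inclusion $\supp(f\ast_{\alpha,\beta}g)\subset[-(a+b),a+b]$ and the multiplicativity $\mathcal F(f\ast_{\alpha,\beta}g)=\mathcal F(f)\,\mathcal F(g)$ separately, and then to deduce smoothness of $f\ast_{\alpha,\beta}g$ from the latter together with the mapping properties of the Opdam--Cherednik transform; the smoothness is the only delicate point.

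\emph{Support.} Fix $x$ and $y\in\supp g$, so that $|y|\le b$. By part (i) of Theorem \ref{mu} we have $\supp\mu_{\,x,-y}^{(\alpha,\beta)}\subset I_{x,-y}=I_{x,y}$, hence $\tau_{\,x}^{(\alpha,\beta)}f(-y)=\int_\R f\,d\mu_{\,x,-y}^{(\alpha,\beta)}$ can be nonzero only when $I_{x,y}$ meets $\supp f\subset[-a,a]$; any $z$ in that intersection obeys $\bigl||x|-|y|\bigr|\le|z|\le a$, so $|x|\le a+|y|\le a+b$. Therefore $(f\ast_{\alpha,\beta}g)(x)=\int_\R\tau_{\,x}^{(\alpha,\beta)}f(-y)\,g(y)\,A_{\alpha,\beta}(|y|)\,dy$ vanishes for $|x|>a+b$, which gives $\supp(f\ast_{\alpha,\beta}g)\subset[-(a+b),a+b]$. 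Note in passing that $f\ast_{\alpha,\beta}g$ is bounded, by \eqref{ingtrans}, so it is in particular an element of $L^2(\R,A_{\alpha,\beta}(|x|)\,dx)$.

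\emph{Multiplicativity.} Since $f\ast_{\alpha,\beta}g$ is bounded with compact support, its transform is given by the absolutely convergent integral \eqref{fourier}, into which I would substitute the definition of the convolution and then apply Fubini --- legitimate because, by the previous paragraph, the resulting integrand is bounded and supported in the compact set $\{|x|\le a+b\}\times\supp g$. Using the symmetry $\tau_{\,x}^{(\alpha,\beta)}f(-y)=\tau_{\,-y}^{(\alpha,\beta)}f(x)$ from part (i) of the preceding Proposition, the inner $x$-integral becomes $\mathcal F\bigl(\tau_{\,-y}^{(\alpha,\beta)}f\bigr)(\lambda)$, which equals ${\rm G}_\lambda^{(\alpha,\beta)}(-y)\,\mathcal F(f)(\lambda)$ by part (v) of that Proposition (applicable since $f\in\mathcal C_c(\R)$). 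Pulling $\mathcal F(f)(\lambda)$ out, the remaining $y$-integral is precisely $\mathcal F(g)(\lambda)$, whence $\mathcal F(f\ast_{\alpha,\beta}g)=\mathcal F(f)\,\mathcal F(g)$.

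\emph{Smoothness.} The kernel $\mathcal K_{\alpha,\beta}$ is singular --- through the factor $g(x,y,z,\chi)_+^{\alpha-\beta-1}$, the $\coth$ terms, and the boundary of the region $\bigl||x|-|y|\bigr|<|z|<|x|+|y|$ --- so one cannot simply differentiate under the integral sign, and I would argue through the transform instead. Every $h\in\mathcal D_a(\R)$ can be written $h=(\cosh x)^{-\rho}\bigl((\cosh x)^{\rho}h\bigr)$ with $(\cosh x)^{\rho}h\in\mathcal D_a(\R)\subset\mathcal S(\R)$, so $\mathcal D_a(\R)\subset\mathcal S_{\alpha,\beta}(\R)$; hence $\mathcal F(f),\mathcal F(g)\in\mathcal S(\R)$ by the Schwartz-space isomorphism theorem recalled above, and consequently $\mathcal F(f\ast_{\alpha,\beta}g)=\mathcal F(f)\,\mathcal F(g)\in\mathcal S(\R)$. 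Applying the inversion formula to $f\ast_{\alpha,\beta}g$ then gives $f\ast_{\alpha,\beta}g=\mathcal J\bigl(\mathcal F(f)\,\mathcal F(g)\bigr)$, and the same isomorphism theorem places the right-hand side in $\mathcal S_{\alpha,\beta}(\R)\subset C^\infty(\R)$; together with the support bound this yields $f\ast_{\alpha,\beta}g\in\mathcal D_{a+b}(\R)$. The one point genuinely requiring care is the validity of $\mathcal J\mathcal F=\mathrm{id}$ on the class of functions containing $f\ast_{\alpha,\beta}g$, which is known a priori only to be bounded with compact support: this follows from Opdam's Plancherel theorem quoted above --- equivalently, from the injectivity of $\mathcal F$ on $L^2(\R,A_{\alpha,\beta}(|x|)\,dx)$, applied to the difference $f\ast_{\alpha,\beta}g-\mathcal J\bigl(\mathcal F(f)\,\mathcal F(g)\bigr)$, both summands of which lie in that space and have vanishing transform.
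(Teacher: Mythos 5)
Your proposal is correct, and it is in fact more complete than the paper's own proof, which establishes only the identity $\mathcal{F}(f\ast_{\alpha,\beta}g)=\mathcal{F}(f)\,\mathcal{F}(g)$ and says nothing about why $f\ast_{\alpha,\beta}g$ lies in $\mathcal{D}_{a+b}(\R)$. For the multiplicativity your route is the same in substance but packaged differently: the paper substitutes the kernel integral for $\tau_{\,x}^{(\alpha,\beta)}f(-y)$, obtaining a triple integral, and then applies the symmetries of Remark \ref{symmetry} together with the product formula; you instead quote parts (i) and (v) of the proposition on translations, which encapsulate exactly those two facts, so the inner $x$--integral collapses to ${\rm G}_\lambda^{(\alpha,\beta)}(-y)\,\mathcal{F}(f)(\lambda)$ in one step. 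Your treatment of the two omitted points is sound and worth having on record: the support bound follows correctly from $\supp\mu_{\,x,-y}^{(\alpha,\beta)}\subset I_{x,y}$ and the triangle inequality $\bigl||x|-|y|\bigr|\le a$, and your smoothness argument --- observing that $\mathcal{D}_a(\R)\subset\mathcal{S}_{\alpha,\beta}(\R)$, pushing everything through the Schwartz isomorphism, and identifying $f\ast_{\alpha,\beta}g$ with $\mathcal{J}\bigl(\mathcal{F}(f)\,\mathcal{F}(g)\bigr)$ via the injectivity of $\mathcal{F}$ on $L^2(\R,A_{\alpha,\beta}(|x|)\,dx)$, which the second form of Opdam's Plancherel formula supplies --- correctly sidesteps the genuine obstacle that the kernel $\mathcal{K}_{\alpha,\beta}$ is too singular to differentiate under the integral sign. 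The price of your approach to multiplicativity is that it leans on part (v) of the earlier proposition, whereas the paper's inline computation is self-contained; the gain is that the Fubini justification is cleaner and the smoothness comes essentially for free from machinery already quoted.
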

\begin{proof}
By definition we have
$$
\mathcal{F}(f\ast_{\alpha,\beta}g)(\lambda)=\int_{\R}\int_{\R}
\tau_{\,x}^{(\alpha,\beta)}f(-y)\,g(y)\,{\rm
G}_\lambda^{(\alpha,\beta)}(-x)\,
A_{\alpha,\beta}(|x|)\,A_{\alpha,\beta}(|y|)\,dx\,dy\,.
$$
Using the product formula for ${\rm G}_\lambda^{(\alpha,\beta)}$ and
Remark 3.1, we deduce that
\begin{align*}
\mathcal{F}(f\ast_{\alpha,\beta}g)(\lambda)
&=\int_{\R}f(z)\int_{\R}g(y)\int_{\R} {\rm
G}_\lambda^{(\alpha,\beta)}(-x)\,
\mathcal K_{\alpha,\beta}(-z,-y,-x)\\
&\times\,A_{\alpha,\beta}(|x|)\,
A_{\alpha,\beta}(|y|)\,A_{\alpha,\beta}(|z|)\,dx\,dy\,dz\\
&=\int_{\R}f(z)\,{\rm
G}_\lambda^{(\alpha,\beta)}(-z)\,A_{\alpha,\beta}(|z|)\,dz\,
\int_{\R}g(y)\,{\rm G}_\lambda^{(\alpha,\beta)}(-y)\,A_{\alpha,\beta}(|y|)\,dy\\
&=\mathcal{F}(f)(\lambda)\,\mathcal{F}(g)(\lambda)\,.
\end{align*}
\end{proof}

By standard arguments, the following statement follows from Lemma
4.5.

\begin{proposition}\label{Young}
Assume that \,$1\!\le\!p,q,r\!\le\!\infty$ satisfy
$\frac1p\!+\!\frac1q\!-\!1\!=\!\frac1r$\,. Then, for every
\,$f\!\in\!L^p(\R,A_{\alpha,\beta}(|x|)\ssf dx)$ and
\,$g\!\in\!L^q(\R,A_{\alpha,\beta}(|x|)\ssf dx)$\,, we have
\,$f\ast_{\alpha,\beta}g\in L^r(\R,A_{\alpha,\beta}(|x|)\ssf dx)$,
and
$$
\|\,f\ast_{\alpha,\beta}g\,\|_r \le
C_{\alpha,\beta}\,\|f\|_p\,\|g\|_q\,,
$$
where $C_{\alpha,\beta}$ is as in \eqref{cst}.
\end{proposition}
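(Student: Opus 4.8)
The plan is to obtain Proposition~\ref{Young} (Young's inequality for $\ast_{\alpha,\beta}$) as a standard consequence of the $L^p$-boundedness of the generalized translations recorded in Lemma~4.5, by the same interpolation scheme one uses for ordinary convolution on $\R$. First I would treat the two endpoint cases. For $r=\infty$, i.e. $\frac1p+\frac1q=1$, given $f\in L^p$ and $g\in L^q$ one estimates
$$
|(f\ast_{\alpha,\beta}g)(x)|
\le\int_{\R}|\tau_{\,x}^{(\alpha,\beta)}f(-y)|\,|g(y)|\,A_{\alpha,\beta}(|y|)\,dy
\le\|\tau_{\,x}^{(\alpha,\beta)}f\|_p\,\|g\|_q
\le C_{\alpha,\beta}\,\|f\|_p\,\|g\|_q
$$
by H\"older's inequality and \eqref{ingtrans}, using also that $y\mapsto\tau_{\,x}^{(\alpha,\beta)}f(-y)$ has the same $L^p$-norm as $\tau_{\,x}^{(\alpha,\beta)}f$ since $A_{\alpha,\beta}(|\cdot|)$ is even. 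For the other endpoint take $q=1$ (so $r=p$); then by Minkowski's integral inequality and Lemma~4.5,
$$
\|f\ast_{\alpha,\beta}g\|_p
\le\int_{\R}\bigl\|\,y\mapsto\tau_{\,x}^{(\alpha,\beta)}f(-y)\,\bigr\|_{L^p(dx)}\,|g(y)|\,A_{\alpha,\beta}(|y|)\,dy
\le C_{\alpha,\beta}\,\|f\|_p\,\|g\|_1,
$$
where I have used part~(i) of Proposition~4.7 to write $\tau_{\,x}^{(\alpha,\beta)}f(-y)=\tau_{\,-y}^{(\alpha,\beta)}f(x)$ and then \eqref{ingtrans}. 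By commutativity the symmetric endpoint $p=1$ follows as well.

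Next I would interpolate. Fix $g\in L^q$ and consider the linear map $f\mapsto f\ast_{\alpha,\beta}g$. The two computations above show it is bounded $L^q{}'\!\to L^\infty$ and $L^1\!\to L^q$ with norm $\le C_{\alpha,\beta}\|g\|_q$ in both cases (where $q'$ is the conjugate exponent). Since $p$ with $\frac1p+\frac1q-1=\frac1r$ interpolates between $1$ and $q'$ as $r$ runs between $q$ and $\infty$, the Riesz--Thorin theorem gives $\|f\ast_{\alpha,\beta}g\|_r\le C_{\alpha,\beta}\|f\|_p\|g\|_q$ for all admissible triples, which is the assertion. The class $\mathcal{D}_a(\R)$ is dense in each $L^p(\R,A_{\alpha,\beta}(|x|)\,dx)$ for $p<\infty$, so one may first carry out all manipulations for compactly supported smooth $f,g$ (where $f\ast_{\alpha,\beta}g$ is well defined and, by the previous proposition, again compactly supported) and then extend by continuity; the case $p=q=r=\infty$ is excluded by the constraint, and the remaining cases involving $\infty$ are covered by the endpoint estimates directly.

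There is essentially no deep obstacle here: the statement is purely formal once Lemma~4.5 is in hand. The one point that needs a little care — and which I would flag as the only mildly delicate step — is the bookkeeping with the weight $A_{\alpha,\beta}(|\cdot|)$ and the sign changes $y\mapsto -y$, $z\mapsto -z$ inside the definition of $\tau_{\,x}^{(\alpha,\beta)}$ and of $\ast_{\alpha,\beta}$: one must use the evenness of the weight together with the symmetry relations in Remark~\ref{symmetry} (equivalently parts~(i) and~(iii) of Proposition~4.7) to see that the relevant kernel is symmetric in all the variables that get integrated, so that Fubini's theorem and the two H\"older/Minkowski estimates may be applied. Once this is checked the interpolation is routine, and this is why the authors simply say ``by standard arguments''.
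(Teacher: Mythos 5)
Your proof is correct and is precisely the ``standard arguments'' the paper invokes without writing them out: the two endpoint estimates via H\"older and Minkowski combined with the translation bound \eqref{ingtrans}, followed by Riesz--Thorin interpolation. Nothing in your argument deviates from what the authors intend, and your remark about tracking the sign changes and the evenness of $A_{\alpha,\beta}(|\cdot|)$ is the right point to be careful about.
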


\section{The Kunze--Stein phenomenon}

This remarkable phenomenon was first observed by Kunze and Stein
\cite{KS} for the group $G=SL(2,\R)$ equipped with its Haar measure.
They proved that
$$
L^p(G)\ast L^2(G)\subset L^2(G) \quad\forall\;1\!\le\!p\!<\!2\,.
$$
By such an inclusion, we mean the existence of a constant
$C_p\!>\!0$ such that the following inequality holds\,:
$$
\|\,f\ast g\,\|_2\leq C_p\,\|f\|_p\,\|g\|_2
\quad\forall\;f\!\in\!L^p(G),\;\forall\;g\!\in\!L^2(G).
$$
This result was generalized by Cowling \cite{Cow} to all connected
noncompact semisimple Lie groups with finite center. We prove the
following analog in our setting (we understand that Trim\`eche has
recently extended this result to higher dimensions).

\begin{theorem}\label{KunzeStein}
Let  $1\leq p<2<q\leq\infty.$  Then
\begin{equation}\label{KunzeStein1}
L^p(\R, A_{\alpha,\beta}(|x|)\ssf dx) \ast_{\alpha,\beta} L^2(\R,
A_{\alpha,\beta}(|x|)\ssf dx) \subset L^2(\R,
A_{\alpha,\beta}(|x|)\ssf dx)
\end{equation}
and
\begin{equation}\label{KunzeStein2}
L^2(\R, A_{\alpha,\beta}(|x|)\ssf dx) \ast_{\alpha,\beta}L^2(\R,
A_{\alpha,\beta}(|x|)\ssf dx) \subset L^q(\R,
A_{\alpha,\beta}(|x|)\ssf dx)\,.
\end{equation}
\end{theorem}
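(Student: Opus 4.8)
The plan is to deduce \eqref{KunzeStein1} from a sharp $L^p$‑bound for the Opdam--Cherednik transform via Opdam's Plancherel formula, and then to deduce \eqref{KunzeStein2} from \eqref{KunzeStein1} by duality together with the trivial endpoint $q=\infty$. Throughout set $d\nu(x)=A_{\alpha,\beta}(|x|)\,dx$, and reduce, by density and the Young inequality of Proposition \ref{Young} (to control the limits), to $f,g\in\mathcal D(\R)$, so that $f\ast_{\alpha,\beta}g\in\mathcal D(\R)$ and all the transform identities used below are legitimate.

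For \eqref{KunzeStein1}, put $h=f\ast_{\alpha,\beta}g$. The convolution identity $\mathcal F(h)=\mathcal F(f)\,\mathcal F(g)$ holds, and since ${\rm G}_{-\lambda}^{(\alpha,\beta)}(x)={\rm G}_\lambda^{(\alpha,\beta)}(-x)$ one has $\mathcal F(\check u)(\lambda)=\mathcal F(u)(-\lambda)$ for every $u$, whence $\mathcal F(\check h)(\lambda)=\mathcal F(\check f)(\lambda)\,\mathcal F(\check g)(\lambda)$. Inserting these into Opdam's Plancherel formula yields
$$\|h\|_{L^2(\nu)}^2=\frac1{16\pi}\int_0^{+\infty}\!\!\bigl(|\mathcal F(f)|^2|\mathcal F(g)|^2+|\mathcal F(\check f)|^2|\mathcal F(\check g)|^2\bigr)\frac{d\lambda}{|\c_{\alpha,\beta}(\lambda)|^2}\le\Bigl(\sup_{\mu\in\R}|\mathcal F(f)(\mu)|\Bigr)^2\|g\|_{L^2(\nu)}^2,$$
using $\sup_\mu|\mathcal F(\check f)(\mu)|=\sup_\mu|\mathcal F(f)(\mu)|$. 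Hence \eqref{KunzeStein1} reduces to $\sup_{\mu\in\R}|\mathcal F(f)(\mu)|\le C_p\|f\|_{L^p(\nu)}$ for $1\le p<2$, which by H\"older's inequality amounts to the uniform integrability $\sup_{\lambda\in\R}\int_\R|{\rm G}_\lambda^{(\alpha,\beta)}(x)|^{p'}A_{\alpha,\beta}(|x|)\,dx<\infty$. This I would establish from the bound $|{\rm G}_\lambda^{(\alpha,\beta)}(x)|\le C\,\varphi_0^{(\alpha,\beta)}(|x|)$ valid uniformly for $\lambda\in\R$ (for the even part this is $|\varphi_\lambda^{(\alpha,\beta)}|\le\varphi_0^{(\alpha,\beta)}$; for the odd part one writes ${\rm G}_{\lambda,{\rm o}}^{(\alpha,\beta)}=-(\rho-i\lambda)^{-1}\partial_x\varphi_\lambda^{(\alpha,\beta)}$ and absorbs the factor $1+|\lambda|$ coming from a derivative estimate $|\partial_x\varphi_\lambda^{(\alpha,\beta)}(x)|\le C(1+|\lambda|)\varphi_0^{(\alpha,\beta)}(|x|)$ against $(\rho-i\lambda)^{-1}$ — alternatively one quotes the global estimates of \cite{Op,O,Sch}), together with $\int_\R\varphi_0^{(\alpha,\beta)}(|x|)^{p'}A_{\alpha,\beta}(|x|)\,dx<\infty$, which holds since near $0$ the integrand is integrable ($2\alpha+1>0$) and near $+\infty$ it is of order $x^{p'}e^{(2\rho-\rho p')x}$ by \eqref{Jacobi0Asymptotic}, integrable precisely when $p'>2$, i.e. $p<2$.

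For \eqref{KunzeStein2}, the endpoint $q=\infty$ follows at once from Cauchy--Schwarz, the uniform $L^2$‑bound \eqref{ingtrans} for the translations, and $A_{\alpha,\beta}(|-y|)=A_{\alpha,\beta}(|y|)$, giving $|f\ast_{\alpha,\beta}g(x)|\le\|\tau_x^{(\alpha,\beta)}f\|_{L^2(\nu)}\|g\|_{L^2(\nu)}\le C_{\alpha,\beta}\|f\|_{L^2(\nu)}\|g\|_{L^2(\nu)}$. For $2<q<\infty$, I would argue by duality from \eqref{KunzeStein1}: checking on the Fourier side (with $\overline{\mathcal F(f)(\lambda)}=\mathcal F(\overline{\check f})(\lambda)$ for $\lambda\in\R$, where $\overline{\check f}(x):=\overline{f(-x)}$) and using injectivity of $\mathcal F$ that $\langle f\ast_{\alpha,\beta}g,h\rangle_{L^2(\nu)}=\langle g,\overline{\check f}\ast_{\alpha,\beta}h\rangle_{L^2(\nu)}$, one gets, for $\|h\|_{L^{q'}(\nu)}\le1$ (note $q'\in(1,2)$), by \eqref{KunzeStein1} applied to $h\ast_{\alpha,\beta}\overline{\check f}$ and $\|\overline{\check f}\|_{L^2(\nu)}=\|f\|_{L^2(\nu)}$,
$$|\langle f\ast_{\alpha,\beta}g,h\rangle_{L^2(\nu)}|\le\|g\|_{L^2(\nu)}\,\|\overline{\check f}\ast_{\alpha,\beta}h\|_{L^2(\nu)}\le C_{q'}\,\|f\|_{L^2(\nu)}\|g\|_{L^2(\nu)}\|h\|_{L^{q'}(\nu)},$$
and the supremum over such $h$ gives \eqref{KunzeStein2}.

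The only genuinely delicate point is the uniform‑in‑$\lambda$ estimate $|{\rm G}_\lambda^{(\alpha,\beta)}(x)|\le C\,\varphi_0^{(\alpha,\beta)}(|x|)$ for real $\lambda$: the odd part of ${\rm G}_\lambda^{(\alpha,\beta)}$ carries the factor $\rho+i\lambda$, which must be compensated by the decay in $\lambda$ of $\c_{\alpha,\beta}$ (equivalently by the extra derivative falling on $\varphi_\lambda^{(\alpha,\beta)}$), and pinning this down cleanly requires the sharp local and asymptotic estimates for the Jacobi functions and their derivatives. Everything else — the density reduction, the two transform identities, the adjunction, the Plancherel computation, and the endpoint $q=\infty$ — is routine.
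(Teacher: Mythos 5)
Your proposal is correct and follows essentially the same route as the paper: Plancherel plus a uniform bound $\sup_{\lambda\in\R}\|{\rm G}_\lambda^{(\alpha,\beta)}\|_{q}<\infty$ (obtained via H\"older from a $\lambda$-uniform pointwise bound on ${\rm G}_\lambda^{(\alpha,\beta)}$) for \eqref{KunzeStein1}, then duality for \eqref{KunzeStein2}. The one ingredient you defer to the literature --- the uniform estimate $|{\rm G}_\lambda^{(\alpha,\beta)}(x)|\le C\,\varphi_0^{(\alpha,\beta)}(|x|)$ for $\lambda\in\R$ --- is exactly the paper's Lemma \ref{Schapira}, which the authors likewise attribute to \cite{Sch} but also prove self-containedly via a monotonicity argument for the quotient ${\rm G}_\lambda^{(\alpha,\beta)}/{\rm G}_0^{(\alpha,\beta)}$ (note that your first suggested derivation, through $|\partial_x\varphi_\lambda^{(\alpha,\beta)}(x)|\le C(1+|\lambda|)\,\varphi_0^{(\alpha,\beta)}(|x|)$, is essentially equivalent in depth to the estimate itself, as you acknowledge).
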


\begin{proof}
{\rm(i)} Let $f, g\!\in\!\mathcal{C}_c(\R) $. Then, by the
Plancherel formula, we have
\begin{align*}
&\int_{\R}|(f\ast_{\alpha,\beta} g)|^2(x)\,A_{\alpha,\beta}(|x|)\,dx\\
&=\int_{\R^+}|\mathcal F({f\ast_{\alpha,\beta} g})(\lambda)|^2
\,\frac{d\lambda}{16\pi\,|\c(\lambda)|^2} +\int_{\R^+}|\mathcal
F(f\ast_{\alpha,\beta} g)\,\check{}\,(\lambda)|^2
\,\frac{d\lambda}{16\pi\,|\c(\lambda)|^2}\\
&\le\sup_{\lambda\in\R,\,w\in\{\pm 1\}}\!|\mathcal{F}(w\cdot
g)(\lambda)|^2\, \Bigl[\,\int_{\R^+}|\mathcal{F}(f)(\lambda)|^2
\frac{d\lambda}{16\pi\,|\c(\lambda)|^2}+ \int_{\R^+}|\mathcal
F({\check{f}})(\lambda)|^2
\frac{d\lambda}{16\pi\,|\c(\lambda)|^2}\,\Bigr]\\
&=\sup_{\lambda\in\R,\,w\in\{\pm 1\}}\!|\mathcal{F}(w\cdot
g)(\lambda)|^2\, \|f\|_2^2\,.
\end{align*}
Here we have used the fact that
\,$\mathcal{F}(f\ast_{\alpha,\beta}g)\check\,{}\! =\mathcal
F(\check{f})\,\mathcal F(\check{g})$\,. Next, if \,$1\!\le\!p\!<\!2$
\,and \,$2\!<\!q\!\le\!\infty$ \,are dual indices, we estimate
\begin{align*}
|\mathcal{F}(w\cdot g)(\lambda)| &\le\int_{\R}|g(wx)|\,|{\rm
G}_\lambda^{(\alpha,\beta)}(-x)|\,
A_{\alpha, \beta}(|x|)\ssf dx\\
&\le\|g\|_p\,\|{\rm G}_\lambda^{(\alpha,\beta)}\|_q
\end{align*}
using H\"older's inequality. We conclude by using Lemma
\ref{Schapira} below, which implies that \,$\|{\rm
G}_\lambda^{(\alpha,\beta)}\|_q$ \,is bounded uniformly in
\,$\lambda\!\in\!\R$\,.

\noindent{\rm(ii)} Let $f,g,k\!\in\!\mathcal{C}_c(\R)$. Using the
Cauchy-Schwartz inequality and \eqref{KunzeStein1}, we get
\begin{align*}
\Bigl|\int_{\R}(f\ast_{\alpha,\beta}g)(x)\,k(x)\,A_{\alpha,\beta}(|x|)\,dx\,\Bigr|
&\le C\,\|g\|_2\,\|\,f\ast\check{k}\,\|_2\\
&\le C_p\,\|f\|_2\,\|g\|_2\,\|k\|_p\,.
\end{align*}
Hence \,$\|\,f\ast_{\alpha,\beta}g\,\|_q\le
C_q\,\|f\|_2\,\|g\|_2$\,.
\end{proof}

\begin{lemma}\label{Schapira}
{\rm (i)} The function \,${\rm G}_{\,0}^{(\alpha,\beta)}$ is stricly
positive and is bounded above by
\begin{equation*}\begin{cases}
\,C\,(1\!+\!x)\,e^{-\rho\,x}
&\text{if \;}x\!\ge\!0\,,\\
\qquad C\,e^{\,\rho\,x}
&\text{if \;}x\!\le\!0\,.\\
\end{cases}\end{equation*}
{\rm (ii)} For every \,$\lambda\!\in\!\R$ and \,$x\!\in\!\R$\,, we
have
\begin{equation*}
\bigl|{\rm G}_\lambda^{(\alpha,\beta)}(x)\bigr|\le {\rm
G}_{\,0}^{(\alpha,\beta)}(x)\,.
\end{equation*}
\end{lemma}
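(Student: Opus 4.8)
The plan is to handle the two assertions by quite different means. Part~(i) reduces to classical facts about the Jacobi function $\varphi_0^{(\alpha,\beta)}$ via the two expressions for ${\rm G}_0^{(\alpha,\beta)}$ in \eqref{Gfunction}, while part~(ii) is best reached through a positive integral representation of ${\rm G}_\lambda^{(\alpha,\beta)}$. I expect the latter to be the only real difficulty, since the naive term-by-term estimate does not suffice.

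For part~(i), the key facts are that $\varphi_0^{(\alpha,\beta)}>0$ everywhere --- immediate from the second representation in \eqref{Jacobi}, whose series $(\cosh x)^{-\rho}\,{}_2{\rm F}_{\!1}(\tfrac\rho2,\tfrac{\alpha-\beta+1}2;\alpha\!+\!1;\tanh^2\!x)$ has only positive terms --- and that $\varphi_0^{(\alpha,\beta)}{}'<0$ on $(0,+\infty)$, which follows by writing the Jacobi equation $\Delta_{\alpha,\beta}\varphi_0^{(\alpha,\beta)}=-\rho^{\,2}\varphi_0^{(\alpha,\beta)}$ in Sturm form $\bigl(A_{\alpha,\beta}\,\varphi_0^{(\alpha,\beta)}{}'\bigr)'=-\rho^{\,2}A_{\alpha,\beta}\,\varphi_0^{(\alpha,\beta)}<0$ and integrating from $0$ (where $A_{\alpha,\beta}(0)=0=\varphi_0^{(\alpha,\beta)}{}'(0)$). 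For $x\ge0$, the first line of \eqref{Gfunction} gives ${\rm G}_0^{(\alpha,\beta)}(x)=\varphi_0^{(\alpha,\beta)}(x)-\tfrac1\rho\,\varphi_0^{(\alpha,\beta)}{}'(x)\ge\varphi_0^{(\alpha,\beta)}(x)>0$, and the bound $C(1+x)e^{-\rho x}$ follows from \eqref{Jacobi0Asymptotic} together with the fact that $\varphi_0^{(\alpha,\beta)}{}'$ is of the same order. For $x=-u$ with $u>0$, I would set $g(u):=e^{\rho u}\varphi_0^{(\alpha,\beta)}(u)$, so that ${\rm G}_0^{(\alpha,\beta)}(-u)=\tfrac1\rho\,e^{-\rho u}\,g'(u)$, and deduce from the Jacobi equation the identity $g''=(2\rho-P)(g'-\rho g)$ with $P(u)=(2\alpha\!+\!1)\coth u+(2\beta\!+\!1)\tanh u$. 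Since $g'-\rho g=e^{\rho u}\,\varphi_0^{(\alpha,\beta)}{}'<0$ and $P(u)-2\rho=\bigl(4(\alpha\!-\!\beta)e^{2u}+4\rho\bigr)/(e^{4u}-1)>0$ on $(0,+\infty)$, we get $g''>0$, so $g'$ is strictly increasing with $g'(0^{+})=\rho$; hence $g'(u)>\rho$, which gives the strict positivity ${\rm G}_0^{(\alpha,\beta)}(-u)>e^{-\rho u}>0$, and, $g'$ being monotone while $g(u)=\mathcal{O}(u)$ by \eqref{Jacobi0Asymptotic}, it is bounded, whence ${\rm G}_0^{(\alpha,\beta)}(-u)\le C\,e^{-\rho u}$. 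Gluing the two rays and using continuity near $0$ finishes part~(i).

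For part~(ii), I would invoke the positive kernel of the Opdam--Cherednik transmutation operator: there is a nonnegative measure $\nu_{x}^{(\alpha,\beta)}$ on $\R$, supported in $[-|x|,|x|]$, with ${\rm G}_\lambda^{(\alpha,\beta)}(x)=\int_{\R}e^{i\lambda t}\,d\nu_{x}^{(\alpha,\beta)}(t)$ for all $\lambda$ (see \cite{Op}, \cite{Sch}). Evaluating at $\lambda=0$ identifies $\nu_{x}^{(\alpha,\beta)}(\R)={\rm G}_0^{(\alpha,\beta)}(x)$ --- consistent with the positivity proved in part~(i) --- and then for real $\lambda$ the triangle inequality yields $\bigl|{\rm G}_\lambda^{(\alpha,\beta)}(x)\bigr|\le\nu_{x}^{(\alpha,\beta)}(\R)={\rm G}_0^{(\alpha,\beta)}(x)$. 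The delicate point is exactly this nonsymmetric positivity: one cannot simply write ${\rm G}_\lambda^{(\alpha,\beta)}=\varphi_\lambda^{(\alpha,\beta)}+\tfrac{\rho+i\lambda}{4(\alpha+1)}\sinh2x\;\varphi_\lambda^{(\alpha+1,\beta+1)}$ and use the classical bounds $|\varphi_\lambda^{(a,b)}|\le\varphi_0^{(a,b)}$, because the factor $\rho+i\lambda$ destroys the comparison; the cancellation underlying $|{\rm G}_\lambda^{(\alpha,\beta)}|\le{\rm G}_0^{(\alpha,\beta)}$ only becomes transparent at the level of the transmutation kernel. With part~(ii) in hand, part~(i) moreover gives $\|{\rm G}_\lambda^{(\alpha,\beta)}\|_{q}\le\|{\rm G}_0^{(\alpha,\beta)}\|_{q}<\infty$ uniformly in $\lambda\in\R$ for every $q>2$, which is what is needed in Theorem~\ref{KunzeStein}.
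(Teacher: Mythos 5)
Your proposal is correct, but it takes a genuinely different route from the paper in both parts. For (i), the paper never differentiates $\varphi_0^{(\alpha,\beta)}$: it writes ${\rm G}_0^{(\alpha,\beta)}(x)=(\cosh x)^{-\rho}\bigl\{\Psi_1(x)+\tanh x\,\Psi_2(x)\bigr\}$ with $\Psi_1,\Psi_2$ two hypergeometric series in $\tanh^2\!x$ and checks coefficientwise that $\Psi_1>\Psi_2>0$ (the ratio of $n$-th coefficients is $\frac{\rho/2+n}{\alpha+1+n}<1$), which gives strict positivity on all of $\R$ in one stroke, the upper bounds then coming from \eqref{Jacobi0Asymptotic} as in your argument. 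Your alternative --- the Sturm identity $\bigl(A_{\alpha,\beta}\varphi_0'\bigr)'=-\rho^2A_{\alpha,\beta}\varphi_0$ giving $\varphi_0'<0$ on $(0,+\infty)$, and the convexity of $g(u)=e^{\rho u}\varphi_0^{(\alpha,\beta)}(u)$ via $g''=(2\rho-P)(g'-\rho g)>0$ --- is correct (I checked the identity and the sign of $P-2\rho$) and even buys a little more, namely the explicit lower bound ${\rm G}_0^{(\alpha,\beta)}(-u)>e^{-\rho u}$. For (ii) the divergence is more substantial: the paper gives a self-contained maximum-principle argument, deriving from \eqref{eigen} a coupled first-order system for $|{\rm Q}_\lambda(\pm x)|^2$ where ${\rm Q}_\lambda={\rm G}_\lambda^{(\alpha,\beta)}/{\rm G}_0^{(\alpha,\beta)}$ and showing that $\max\{|{\rm Q}_\lambda(x)|^2,|{\rm Q}_\lambda(-x)|^2\}$ is nonincreasing on $[0,+\infty)$, whence $|{\rm Q}_\lambda|\le{\rm Q}_\lambda(0)=1$; you instead outsource the whole difficulty to the positivity of the transmutation measure $\nu_x$ in ${\rm G}_\lambda^{(\alpha,\beta)}(x)=\int e^{i\lambda t}\,d\nu_x(t)$. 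That reduction is logically sound (evaluation at $\lambda=0$ does identify $\nu_x(\R)={\rm G}_0^{(\alpha,\beta)}(x)$, and the triangle inequality finishes), but note that the positivity you invoke is a nontrivial theorem in its own right, and your attribution is off: it is not in \cite{Op} or \cite{Sch} but is precisely the Gallardo--Trim\`eche result \cite{GT} for the rank-one Jacobi--Cherednik intertwining operator --- Schapira \cite{Sch} proves the estimate $|{\rm G}_\lambda|\le{\rm G}_0$ by the very quotient argument the paper reproduces, without any positive integral representation. So your proof stands, at the price of heavier external machinery than the statement itself; your closing observation that the naive termwise bound on \eqref{Gfunction} fails because of the factor $\rho+i\lambda$ is exactly the right diagnosis of why a global argument is needed.
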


\begin{proof}
These estimates are proved in full generality in \cite{Sch} (see
Lemma 3.1, Proposition 3.1.a and Theorem 3.2). For the reader's
convenience, we include a proof in dimension $1$.

\noindent (i) Firstly, by specializing \eqref{Jacobi} and
\eqref{Gfunction} for $\lambda\!=\!0$, we obtain
\begin{equation}\label{Jacobi0}\textstyle
\varphi_{\,0}^{(\alpha,\beta)}(x) ={}_2{\rm
F}_{\!1}\bigl(\frac\rho2,\frac{\alpha-\beta+1}2;
\alpha\!+\!1\ssf;\tanh^2\!x\bigr)\, (\cosh x)^{-\rho}
\end{equation}
and
\begin{equation}\label{G0}
{\rm G}_{\,0}^{(\alpha,\beta)}(x) =\varphi_{\,0}^{(\alpha,\beta)}(x)
+\frac{\frac\rho2}{\alpha\!+\!1}\sinh x\cosh x\,
\varphi_{\,0}^{(\alpha+1,\beta+1)}(x)\,.
\end{equation}
It is clear that \eqref{Jacobi0} is stricly positive, hence
\eqref{G0} when $x\!\ge\!0$. By looking more carefully at their
expansions, we observe that the expression
\begin{equation*}\textstyle
\Psi_1(x) ={}_2{\rm F}_{\!1}
\bigl(\frac\rho2,\frac{\alpha-\beta+1}2;
\alpha\!+\!1\ssf;\tanh^2\!x\Bigr)
=\displaystyle\sum\nolimits_{n=0}^{+\infty}
\frac{(\frac\rho2)_n\,(\frac{\alpha-\beta+1}2)_n}
{(\alpha\!+\!1)_n\,n\ssf!}\,(\tanh x)^{2n}
\end{equation*}
is stricly larger than the expression
\begin{align*}
\Psi_2(x) &=\frac{\frac\rho2}{\alpha\!+\!1}\;\textstyle {}_2{\rm
F}_{\!1}\bigl(\frac\rho2\!+\!1,\frac{\alpha-\beta+1}2;
\alpha+2\ssf;\tanh^2\!x\bigr)\\
&=\sum\nolimits_{n=0}^{+\infty}
\frac{(\frac\rho2)_{n+1}(\frac{\alpha-\beta+1}2)_n}
{(\alpha\!+\!1)_{n+1}\,n\ssf!}\,(\tanh x)^{2n}\,.
\end{align*}
Hence
\begin{align*}
{\rm G}_{\,0}^{(\alpha,\beta)}(x) =(\cosh x)^{-\rho}\,\bigl\{
\Psi_1(x)+\tanh x\,\Psi_2(x)\bigr\}
>(\cosh x)^{-\rho}\,\bigl\{\Psi_1(x)-\Psi_2(x)\bigr\}
\end{align*}
is strictly positive on \,$\R$\,. Secondly, by combining \eqref{G0}
with \eqref{Jacobi0Asymptotic}, we obtain
\begin{equation*}
{\rm G}_0^{(\alpha,\beta)}(x)
=\frac{2^{\ssf\rho+2}\,\Gamma(\alpha\!+\!1)}
{\Gamma(\frac\rho2)\,\Gamma(\frac{\alpha-\beta+1}2)}\,
x\,e^{-\rho\,x} +\mathcal{O}\bigl(e^{-\rho\,x}\bigr) \qquad\text{as
\;}x\!\to\!+\infty
\end{equation*}
and
\begin{equation*}
{\rm G}_0^{(\alpha,\beta)}(x)=\mathcal{O}\bigl(e^{\,\rho\,x}\bigr)
\qquad\text{as \;}x\!\to\!-\infty\,,
\end{equation*}
which yields the announced upper bounds.\\
(ii) Consider the quotient
\begin{equation*}\textstyle
{\rm Q}_{\ssf\lambda}^{(\alpha,\beta)}(x) =\frac{{\rm
G}_\lambda^{(\alpha,\beta)}(x)} {{\rm G}_0^{(\alpha,\beta)}(x)}\,.
\end{equation*}
By using the equation \eqref{eigen} for \,${\rm
G}_\lambda^{(\alpha,\beta)}$ and \,${\rm G}_0^{(\alpha,\beta)}$, we
obtain
\begin{equation*}\begin{aligned}
&\textstyle \frac\partial{\partial x}\,{\rm
Q}_{\ssf\lambda}^{(\alpha,\beta)}(x) =\frac{\frac\partial{\partial
x}\,{\rm G}_\lambda^{(\alpha,\beta)}(x)} {{\rm
G}_0^{(\alpha,\beta)}(x)} -{\rm
Q}_{\ssf\lambda}^{(\alpha,\beta)}(x)\, \frac{\frac\partial{\partial
x}\,{\rm G}_0^{(\alpha,\beta)}(x)} {{\rm G}_0^{(\alpha,\beta)}(x)}
\\&\textstyle
=\bigl\{(\alpha\!-\!\beta)\coth x+(2\ssf\beta\!+\!1)\coth2\ssf
x+\rho\ssf\bigr\} \,\frac{{\rm G}_0^{(\alpha,\beta)}(-x)}{{\rm
G}_0^{(\alpha,\beta)}(x)}\, \bigl\{{\rm
Q}_{\ssf\lambda}^{(\alpha,\beta)}(-x)\ssb -{\rm
Q}_{\ssf\lambda}^{(\alpha,\beta)}(x)\bigr\}
\\&\textstyle
+i\ssf\lambda\,{\rm Q}_{\ssf\lambda}^{(\alpha,\beta)}(x)\,.
\end{aligned}\end{equation*}
Hence
\begin{align*}\textstyle
\frac\partial{\partial x}\, \bigl|\ssf{\rm
Q}_{\ssf\lambda}^{(\alpha,\beta)}(x)\bigr|^2 &\textstyle=
2\;\Re\,\Bigl[\,\frac\partial{\partial x}\,{\rm
Q}_{\ssf\lambda}^{(\alpha,\beta)}(x) \,\overline{{\rm
Q}_{\ssf\lambda}^{(\alpha,\beta)}(x)}\,\Bigr]
\\&\textstyle
=-\,2\,\bigl\{(\alpha\!-\!\beta)\coth x
+(2\ssf\beta\!+\!1)\coth2\ssf x+\rho\ssf\bigr\} \,\frac{{\rm
G}_0^{(\alpha,\beta)}(-x)}{{\rm G}_0^{(\alpha,\beta)}(x)}
\\&\textstyle\times
\Bigl\{\,\bigl|\ssf{\rm
Q}_{\ssf\lambda}^{(\alpha,\beta)}(x)\bigr|^2\ssb
-\ssf\Re\,\Bigl[\ssf{\rm Q}_{\ssf\lambda}^{(\alpha,\beta)}(-x)\,
\overline{{\rm Q}_{\ssf\lambda}^{(\alpha,\beta)}(x)}\,\Bigr]\Bigr\}
\end{align*}
and
\begin{align*}\textstyle
\frac\partial{\partial x}\, \bigl|\ssf{\rm
Q}_{\ssf\lambda}^{(\alpha,\beta)}(-x)\bigr|^2 &\textstyle
=-\,2\,\bigl\{(\alpha\!-\!\beta)\coth x
+(2\ssf\beta\!+\!1)\coth2\ssf x-\rho\ssf\bigr\} \,\frac{{\rm
G}_0^{(\alpha,\beta)}(x)}{{\rm G}_0^{(\alpha,\beta)}(-x)}
\\&\textstyle\times
\Bigl\{\,\bigl|\ssf{\rm
Q}_{\ssf\lambda}^{(\alpha,\beta)}(-x)\bigr|^2\ssb
-\ssf\Re\,\Bigl[\ssf{\rm Q}_{\ssf\lambda}^{(\alpha,\beta)}(x)\,
\overline{{\rm
Q}_{\ssf\lambda}^{(\alpha,\beta)}(-x)}\,\Bigr]\ssf\Bigr\}\,.
\end{align*}
Thus, for every \ssf$x\!>\!0$\ssf, we have
\begin{equation}\begin{aligned}\label{Qplus}\textstyle
\frac\partial{\partial x}\, \bigl|\ssf{\rm
Q}_{\ssf\lambda}^{(\alpha,\beta)}(x)\bigr|^2 &\textstyle
\le-\,2\,\bigl\{(\alpha\!-\!\beta)\coth x
+(2\ssf\beta\!+\!1)\coth2\ssf x+\rho\ssf\bigr\} \, \frac{{\rm
G}_0^{(\alpha,\beta)}(-x)}{{\rm G}_0^{(\alpha,\beta)}(x)}
\\&\textstyle\times\,
\bigl|\ssf{\rm Q}_{\ssf\lambda}^{(\alpha,\beta)}(x)\bigr|\,
\bigl\{\bigl|\ssf{\rm Q}_{\ssf\lambda}^{(\alpha,\beta)}(x)\bigr|\ssb
-\ssb\bigl|\ssf{\rm Q}_{\ssf\lambda}^{(\alpha,\beta)}(-x)\bigr|\bigr\}\\
&\le\,0\vphantom{\frac00}
\end{aligned}\end{equation}
if \,$\bigl|\ssf{\rm Q}_{\ssf\lambda}^{(\alpha,\beta)}(x)\bigr|\ssb
\ge\ssb\bigl|\ssf{\rm Q}_{\ssf\lambda}^{(\alpha,\beta)}(-x)\bigr|$
\,and
\begin{equation}\begin{aligned}\label{Qminus}\textstyle
\frac\partial{\partial x}\, \bigl|\ssf{\rm
Q}_{\ssf\lambda}^{(\alpha,\beta)}(-x)\bigr|^2 &\textstyle
\le-\,2\,\bigl\{(\alpha\!-\!\beta)\coth x
+(2\ssf\beta\!+\!1)\coth2\ssf x-\rho\ssf\bigr\} \, \frac{{\rm
G}_0^{(\alpha,\beta)}(x)}{{\rm G}_0^{(\alpha,\beta)}(-x)}
\\&\textstyle\times\,
\bigl|\ssf{\rm Q}_{\ssf\lambda}^{(\alpha,\beta)}(-x)\bigr|\,
\bigl\{\bigl|\ssf{\rm
Q}_{\ssf\lambda}^{(\alpha,\beta)}(-x)\bigr|\ssb
-\ssb\bigl|\ssf{\rm Q}_{\ssf\lambda}^{(\alpha,\beta)}(x)\bigr|\bigr\}\\
&\le\,0\vphantom{\frac00}
\end{aligned}\end{equation}
if \,$|{\rm Q}_{\ssf\lambda}^{(\alpha,\beta)}(-x)|\ssb \ge\ssb|{\rm
Q}_{\ssf\lambda}^{(\alpha,\beta)}(x)|$\ssf. As real analytic
functions of \ssf$x$\ssf, \,$|\ssf{\rm
Q}_{\ssf\lambda}^{(\alpha,\beta)}(x)|^2$ and \,$|\ssf{\rm
Q}_{\ssf\lambda}^{(\alpha,\beta)}(-x)|^2$ coincide either everywhere
or on a discrete subset of \ssf$\R$ \ssf with no accumulation point.
In the first case, \ssf$|\ssf{\rm
Q}_{\ssf\lambda}^{(\alpha,\beta)}(x)|^2\! =\ssb|\ssf{\rm
Q}_{\ssf\lambda}^{(\alpha,\beta)}(-x)|^2$ is a decreasing function
of \ssf$x$ \ssf on \ssf$[\ssf0,+\infty)$, according to \eqref{Qplus}
or \eqref{Qminus}. In the second case, consider the continuous and
piecewise differentiable function
\begin{equation*}
M(x)=\ssf\max\, \bigl\{\,\bigl|\ssf{\rm
Q}_{\ssf\lambda}^{(\alpha,\beta)}(x)\bigr|^2, \ssf\bigl|\ssf{\rm
Q}_{\ssf\lambda}^{(\alpha,\beta)}(-x)\bigr|^2\ssf\bigr\}
\end{equation*}
on \ssf$[\ssf0,+\infty)$\ssf. Firstly, if \,$\bigl|\ssf{\rm
Q}_{\ssf\lambda}^{(\alpha,\beta)}(x)\bigr|\ssb
>\ssb\bigl|\ssf{\rm Q}_{\ssf\lambda}^{(\alpha,\beta)}(-x)\bigr|$\ssf,
then
\begin{equation*}\textstyle
\frac\partial{\partial x}\,M(x)\ssf =\ssf \frac\partial{\partial
x}\, \bigl|\ssf{\rm Q}_{\ssf\lambda}^{(\alpha,\beta)}(x)\bigr|^2
\ssb<\ssf0\ssf,
\end{equation*}
according to \eqref{Qplus}. Secondly, if \,$\bigl|\ssf{\rm
Q}_{\ssf\lambda}^{(\alpha,\beta)}(x)\bigr|\ssb <\ssb\bigl|\ssf{\rm
Q}_{\ssf\lambda}^{(\alpha,\beta)}(-x)\bigr|$\ssf, then
\begin{equation*}\textstyle
\frac\partial{\partial x}\,M(x)\ssf =\ssf \frac\partial{\partial
x}\, \bigl|\ssf{\rm Q}_{\ssf\lambda}^{(\alpha,\beta)}(-x)\bigr|^2
\ssb<\ssf0\ssf,
\end{equation*}
according to \eqref{Qminus}. Thirdly, if \,$\bigl|\ssf{\rm
Q}_{\ssf\lambda}^{(\alpha,\beta)}(x)\bigr|\ssb =\ssb\bigl|\ssf{\rm
Q}_{\ssf\lambda}^{(\alpha,\beta)}(-x)\bigr|$ \,for some
\,$x\!>\!0$\ssf, then \ssf $M$ has left and right derivatives at
\ssf$x$\ssf, which are nonpositive, according to \eqref{Qplus} and
\eqref{Qminus}. Thus \ssf$M$ is a decreasing function on
\ssf$[\ssf0,+\infty)$\ssf. In all cases, we conclude in particular
that, for every \ssf$x\!\in\!\R$\ssf,
\begin{equation*}
\bigl|\ssf{\rm Q}_{\ssf\lambda}^{(\alpha,\beta)}(x)\bigr|
\le\bigl|\ssf{\rm Q}_{\ssf\lambda}^{(\alpha,\beta)}(0)\bigr| =1
\quad\text{i.e.}\quad \bigl|\ssf{\rm
G}_\lambda^{(\alpha,\beta)}(x)\bigr| \le{\rm
G}_0^{(\alpha,\beta)}(x)\ssf.
\end{equation*}
\end{proof}

The following results are deduced by  interpolation and duality from
Theorem \ref{KunzeStein} and Proposition \ref{Young}.
\begin{corollary}
\begin{itemize}
\item[(i)]
Let $1\!\le\!p\!<\!q\!\le\!2$\ssf. Then
$$
L^p(\R, A_{\alpha,\beta}(|x|)\ssf dx) \ast_{\alpha,\beta}L^q(\R,
A_{\alpha,\beta}(|x|)\ssf dx) \subset L^q(\R,
A_{\alpha,\beta}(|x|)\ssf dx)\ssf.
$$
\item[(ii)]
Let \,$1\!<\!p\!<\!2$ and \,$\!p\!<\!q\!\le\!\frac p{2-p}$. Then
$$
L^p(\R, A_{\alpha,\beta}(|x|)\ssf dx) \ast_{\alpha,\beta}  L^p(\R,
A_{\alpha,\beta}(|x|)\ssf dx) \subset L^q(\R,
A_{\alpha,\beta}(|x|)\ssf dx)\ssf.
$$
\item[(iii)]
Let \,$2\!<\!p,q\!<\!\infty$ such that \,$\frac
q2\!\le\!p\!<\!q$\ssf. Then
$$
L^p(\R, A_{\alpha,\beta}(|x|)\ssf dx) \ast_{\alpha,\beta}
L^{q^{\prime}}(\R, A_{\alpha,\beta}(|x|)\ssf dx) \subset L^q(\R,
A_{\alpha,\beta}(|x|)\ssf dx)\ssf.
$$
\end{itemize}
\end{corollary}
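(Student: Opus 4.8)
The plan is to deduce all three inclusions from Theorem~\ref{KunzeStein}, Proposition~\ref{Young} and the commutativity of $\ast_{\alpha,\beta}$, the main tools being a duality identity for the convolution and the \emph{bilinear} Riesz--Thorin interpolation theorem, applied to $(f,g)\longmapsto f\ast_{\alpha,\beta}g$ on $\mathcal C_c(\R)\times\mathcal C_c(\R)$; since all the exponents occurring in (i)--(iii) are finite, the resulting estimates extend to the full $L^p$ spaces by density (throughout, $L^p$ is taken with respect to the measure $A_{\alpha,\beta}(|x|)\,dx$). The first step is to record the identity
$$\int_\R(f\ast_{\alpha,\beta}g)(x)\,h(x)\,A_{\alpha,\beta}(|x|)\,dx=\int_\R\check g(x)\,(f\ast_{\alpha,\beta}\check h)(x)\,A_{\alpha,\beta}(|x|)\,dx\qquad(f,g,h\in\mathcal C_c(\R)),$$
where $\check u(x)=u(-x)$. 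It follows by Fubini (legitimate since $\|\mu_{x,y}^{(\alpha,\beta)}\|\le C_{\alpha,\beta}$) from the symmetry $\mathcal K_{\alpha,\beta}(x,y,z)=\mathcal K_{\alpha,\beta}(y,x,z)$ of Remark~\ref{symmetry} together with the evenness of $A_{\alpha,\beta}(|x|)$, and it is precisely the identity already used in the proof of Theorem~\ref{KunzeStein}(ii). Taking suprema over $h$ and using commutativity, it yields the transfer rule: if $\|f\ast_{\alpha,\beta}g\|_r\le C\,\|f\|_p\|g\|_q$ for all $f,g$, then also $\|f\ast_{\alpha,\beta}g\|_{q'}\le C\,\|f\|_p\|g\|_{r'}$ and $\|f\ast_{\alpha,\beta}g\|_{p'}\le C\,\|f\|_q\|g\|_{r'}$.

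For part~(ii), the case $q=\frac p{2-p}$ is Proposition~\ref{Young}, so assume $p<q<\frac p{2-p}$. The idea is to interpolate $(f,g)\mapsto f\ast_{\alpha,\beta}g$ between its two bounds $L^1\times L^1\to L^1$ (Proposition~\ref{Young}) and $L^2\times L^2\to L^c$, the latter valid for every $2<c<\infty$ by \eqref{KunzeStein2}. Bilinear Riesz--Thorin then furnishes, for $\theta\in(0,1)$, a bound $L^{p_\theta}\times L^{p_\theta}\to L^{r_\theta}$ with $\frac1{p_\theta}=1-\frac\theta2$ and $\frac1{r_\theta}=(1-\theta)+\frac\theta c$. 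Taking $\theta=2-\frac2p$, which lies in $(0,1)$ since $1<p<2$, forces $p_\theta=p$ and $\frac1{r_\theta}=\bigl(\frac2p-1\bigr)+\theta\,\frac1c$; as $\frac1c$ runs over $(0,\frac12)$ the quantity $\frac1{r_\theta}$ runs over $\bigl(\frac2p-1,\frac1p\bigr)$, i.e. $r_\theta$ runs over $\bigl(p,\frac p{2-p}\bigr)$, so an admissible $c$ can be chosen to realise the prescribed $q$.

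Part~(iii) then follows from part~(ii) via the transfer rule and commutativity: $L^p\ast_{\alpha,\beta}L^{q'}\subset L^q$ is equivalent to $L^{q'}\ast_{\alpha,\beta}L^{q'}\subset L^{p'}$, and setting $\tilde p=q'$, $\tilde q=p'$, the hypotheses $2<p,q<\infty$ and $\frac q2\le p<q$ translate bijectively into $1<\tilde p<2$ and $\tilde p<\tilde q\le\frac{\tilde p}{2-\tilde p}$. For part~(i), the case $q=2$ is \eqref{KunzeStein1}, so assume $1\le p<q<2$. This time one interpolates $(f,g)\mapsto f\ast_{\alpha,\beta}g$ between $L^1\times L^1\to L^1$ (Proposition~\ref{Young}) and $L^a\times L^2\to L^2$, valid for $1\le a<2$ by \eqref{KunzeStein1}, which gives for $\theta\in(0,1)$ a bound $L^{a_\theta}\times L^{b_\theta}\to L^{c_\theta}$ with $\frac1{a_\theta}=(1-\theta)+\frac\theta a$ and $\frac1{b_\theta}=\frac1{c_\theta}=1-\frac\theta2$. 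Taking $\theta=2-\frac2q\in(0,1)$ gives $b_\theta=c_\theta=q$, and then the choice $a\in[1,2)$ with $\frac1a=\frac{1/p-(1-\theta)}\theta\in(\frac12,1]$, admissible precisely because $p<q$, gives $a_\theta=p$.

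The step requiring care is the interpolation. One cannot merely fix $f$ and interpolate the linear operator $g\mapsto f\ast_{\alpha,\beta}g$: the estimates available for it --- $L^1\to L^p$ from Young, $L^2\to L^2$ from Kunze--Stein, $L^{p'}\to L^\infty$ from H\"older --- have their $(\frac1{\mathrm{in}},\frac1{\mathrm{out}})$-points on or below the diagonal, so linear interpolation only recovers the endpoints of the ranges in (i)--(iii) and misses the interior. One genuinely needs the bilinear theorem, which uses the full strength of the Kunze--Stein inclusions \eqref{KunzeStein1}--\eqref{KunzeStein2}. A secondary, routine point is to verify the duality identity of the first step with the reflections $x\mapsto-x$ carefully tracked, these being harmless since $A_{\alpha,\beta}(|x|)$ is even.
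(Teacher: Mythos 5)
Your proof is correct and follows exactly the route the paper invokes: the paper gives no argument for this corollary beyond the single sentence that it ``is deduced by interpolation and duality from Theorem \ref{KunzeStein} and Proposition \ref{Young}'', and your duality identity, the resulting transfer rule, and the bilinear Riesz--Thorin interpolation between the Young and Kunze--Stein endpoints are the natural (and valid) implementation of that strategy. Your closing observation that one genuinely needs the \emph{bilinear} theorem --- since interpolating the linear operator $g\mapsto f\ast_{\alpha,\beta}g$ for fixed $f$ only recovers the endpoint cases $p=1$ or $q=2$ --- is accurate and is a worthwhile clarification of a point the paper leaves implicit.
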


\section{A special orthogonal system}

In this section we construct an orthogonal basis of \ssf$L^2(\R,
A_{\alpha,\beta}(|x|)\ssf dx)$ \ssf and we compute its
Opdam--Cherednik transform. As limits, we recover the Hermite
functions constructed by Rosenblum \cite{Rosn}.

\begin{proposition}
Let \,$\alpha\!\ge\!\beta\!\ge\!-\frac12$ with
\,$\alpha\!>\!-\frac12$\ssf. For any fixed \,$\delta\!>\!0$\ssf,
consider the sequence of functions
\begin{equation}\label{basis}\begin{cases}
\,{\rm H}_{\ssf2n}^{\,\delta}(x)=(\cosh
x)^{-\alpha-\beta-\delta-2}\,
{\rm P}_n^{(\alpha,\delta)}(1\!-\!2\tanh^2\!x)\ssf,\\
\,{\rm H}_{\ssf2n+1}^{\,\delta}(x)=(\cosh
x)^{-\alpha-\beta-\delta-2}\, {\rm
P}_n^{(\alpha+1,\delta-1)}(1\!-\!2\tanh^2\!x)\,\tanh x\ssf,
\end{cases}\end{equation}
whose definition involves the Jacobi polynomials
\eqref{JacobiPolynomial}. Then \,$\{{\rm H}_{\ssf
n}^{\ssf\delta}\}_{n\in\N}$ is an orthogonal basis of
\,$L^2(\R,A_{\alpha,\beta}(|x|)\ssf dx)$.
\end{proposition}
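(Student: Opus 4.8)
The plan is to push the whole problem onto the interval $[-1,1]$ by the substitution $t=\tanh^2\!x$ (equivalently $u=1-2t$) and then to invoke the classical orthogonality and completeness of the Jacobi polynomials. First note that, since $\cosh x$ and $\tanh^2\!x$ are even while $\tanh x$ is odd, the function ${\rm H}_{\ssf2n}^{\,\delta}$ is even and ${\rm H}_{\ssf2n+1}^{\,\delta}$ is odd; hence $\int_{\R}{\rm H}_{\ssf2m}^{\,\delta}\,{\rm H}_{\ssf2n+1}^{\,\delta}\,A_{\alpha,\beta}(|x|)\,dx=0$ for all $m,n$, and $L^2(\R,A_{\alpha,\beta}(|x|)\ssf dx)$ splits as the orthogonal sum $L^2_{\text{\rm e}}\oplus L^2_{\text{\rm o}}$ of its even part $L^2_{\text{\rm e}}$ and odd part $L^2_{\text{\rm o}}$, each of which, by restriction to $\R^+$ together with the harmless identity $\int_{\R}=2\int_{0}^{+\infty}$, is isometric up to a factor $2$ to $L^2(\R^+,A_{\alpha,\beta}(x)\ssf dx)$. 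It therefore suffices to show that $\{{\rm H}_{\ssf2n}^{\,\delta}\}_{n\in\N}$ is an orthogonal basis of $L^2_{\text{\rm e}}$ and that $\{{\rm H}_{\ssf2n+1}^{\,\delta}\}_{n\in\N}$ is an orthogonal basis of $L^2_{\text{\rm o}}$. The hypothesis $\delta>0$ guarantees that all the Jacobi polynomials and weights appearing below are well defined.

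For $x>0$ set $t=\tanh^2\!x\in(0,1)$, so that $\cosh^2\!x=(1-t)^{-1}$, $\sinh^2\!x=t\,(1-t)^{-1}$ and $dx=\frac{dt}{2\sqrt t\,(1-t)}$. A direct computation gives the Jacobian identity
\begin{equation*}
(\cosh x)^{-2(\alpha+\beta+\delta+2)}\,(\sinh x)^{2\alpha+1}\,(\cosh x)^{2\beta+1}\,dx
=\tfrac12\,t^{\ssf\alpha}\,(1-t)^{\ssf\delta}\,dt\,,
\end{equation*}
so that, substituting $u=1-2t$, the inner product becomes
\begin{equation*}
\int_{\R}{\rm H}_{\ssf2m}^{\,\delta}(x)\,{\rm H}_{\ssf2n}^{\,\delta}(x)\,A_{\alpha,\beta}(|x|)\,dx
=\const\int_{-1}^{1}{\rm P}_m^{(\alpha,\delta)}(u)\,{\rm P}_n^{(\alpha,\delta)}(u)\,(1-u)^{\alpha}(1+u)^{\delta}\,du\,.
\end{equation*}
This is $\const$ times the classical orthogonality integral for the Jacobi polynomials ${\rm P}_n^{(\alpha,\delta)}$, which gives at once the orthogonality of $\{{\rm H}_{\ssf2n}^{\,\delta}\}_{n}$ and the explicit value of $\|{\rm H}_{\ssf2n}^{\,\delta}\|^2$. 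The odd family is handled the same way: the extra factor $\tanh x$ contributes $\tanh^2\!x=t$ to $|{\rm H}_{\ssf2n+1}^{\,\delta}|^2\,A_{\alpha,\beta}$, which raises the first Jacobi exponent by one, and with the corresponding normalisation the same computation reduces $\int_{\R}{\rm H}_{\ssf2m+1}^{\,\delta}\,{\rm H}_{\ssf2n+1}^{\,\delta}\,A_{\alpha,\beta}(|x|)\,dx$ to $\const$ times the orthogonality integral $\int_{-1}^{1}{\rm P}_m^{(\alpha+1,\delta-1)}(u)\,{\rm P}_n^{(\alpha+1,\delta-1)}(u)\,(1-u)^{\alpha+1}(1+u)^{\delta-1}\,du$ for ${\rm P}_n^{(\alpha+1,\delta-1)}$.

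Finally, completeness is obtained by reversing the substitution. By the Jacobian identity above, the map $g\longmapsto\bigl[\,x\mapsto(\cosh x)^{-\alpha-\beta-\delta-2}\,g(1-2\tanh^2\!x)\,\bigr]$ is a surjective isometry (up to a constant) of $L^2\bigl((-1,1),(1-u)^{\alpha}(1+u)^{\delta}\,du\bigr)$ onto $L^2_{\text{\rm e}}$ — surjectivity because $x\mapsto 1-2\tanh^2\!x$ is a bijection of $[\ssf0,+\infty)$ onto $(-1,1\ssf]$ — and likewise the map with the extra factor $\tanh x$ is a surjective isometry of $L^2\bigl((-1,1),(1-u)^{\alpha+1}(1+u)^{\delta-1}\,du\bigr)$ onto $L^2_{\text{\rm o}}$. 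Since $(-1,1)$ is bounded, the polynomials — hence the orthogonal systems $\{{\rm P}_n^{(\alpha,\delta)}\}$ and $\{{\rm P}_n^{(\alpha+1,\delta-1)}\}$ — are dense in the respective weighted $L^2$-spaces; transporting this through the two isometries shows that $\{{\rm H}_{\ssf2n}^{\,\delta}\}_n$ is complete in $L^2_{\text{\rm e}}$ and $\{{\rm H}_{\ssf2n+1}^{\,\delta}\}_n$ is complete in $L^2_{\text{\rm o}}$, whence $\{{\rm H}_{\ssf n}^{\,\delta}\}_{n\in\N}$ is an orthogonal basis of $L^2(\R,A_{\alpha,\beta}(|x|)\,dx)$. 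The main obstacle here is only a matter of bookkeeping: one must carefully verify the Jacobian identity and, in the odd case, match the exponents coming from the factor $\tanh x$ with the parameters of the Jacobi polynomials, and check that multiplication by $\tanh x$ — which vanishes only at $x=0$, where $A_{\alpha,\beta}$ itself degenerates — entails no loss of surjectivity.
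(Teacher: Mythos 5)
Your overall strategy --- push everything to $[-1,1]$ via $t=\tanh^2\!x$ (then $u=1-2t$), split into even and odd parts, and invoke classical Jacobi orthogonality plus density of polynomials --- is exactly the paper's, and your treatment of the even family and of completeness is sound. But the odd case contains a genuine error at precisely the bookkeeping step you flagged as the main obstacle. You correctly note that the extra factor $\tanh^2\!x=t$ turns the weight $\tfrac12\,t^{\alpha}(1-t)^{\delta}\,dt$ into $\tfrac12\,t^{\alpha+1}(1-t)^{\delta}\,dt$, i.e.\ into $\const\,(1-u)^{\alpha+1}(1+u)^{\delta}\,du$: the \emph{second} exponent stays equal to $\delta$ and does not drop to $\delta-1$. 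The integral your computation actually produces is therefore
\begin{equation*}
\const\int_{-1}^{1}{\rm P}_m^{(\alpha+1,\delta-1)}(u)\,{\rm P}_n^{(\alpha+1,\delta-1)}(u)\,(1-u)^{\alpha+1}(1+u)^{\delta}\,du\,,
\end{equation*}
which is \emph{not} the orthogonality integral for ${\rm P}_n^{(\alpha+1,\delta-1)}$ and does not vanish for $m\ne n$. Concretely, for $\alpha=\beta=0$ and $\delta=1$ one has ${\rm H}_{1}^{1}(x)=(\cosh x)^{-3}\tanh x$, ${\rm H}_{3}^{1}(x)=(\cosh x)^{-3}(2-3\tanh^2\!x)\tanh x$, and
\begin{equation*}
\int_{\R}{\rm H}_{1}^{1}(x)\,{\rm H}_{3}^{1}(x)\,A_{0,0}(|x|)\,dx
=2\int_{0}^{1}y^3\,(1-y^2)\,(2-3y^2)\,dy=\tfrac1{12}\neq0\,,
\end{equation*}
so with the definition \eqref{basis} taken literally the odd family is not orthogonal, and the final line of your second paragraph is a non sequitur.

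The source of the trouble is a misprint in \eqref{basis} which your argument and the paper's proof each ``repair'' tacitly and incompatibly: the odd functions should carry a factor $\sinh x$ rather than $\tanh x$ (equivalently, the exponent of $\cosh x$ should be $-\alpha-\beta-\delta-1$ if the factor $\tanh x$ is kept). Since $\sinh^2\!x=t\,(1-t)^{-1}$, the extra $(1-t)^{-1}$ is exactly what lowers the second exponent to $\delta-1$ and yields the weight $(1-u)^{\alpha+1}(1+u)^{\delta-1}$ matching ${\rm P}_n^{(\alpha+1,\delta-1)}$; this is the weight the paper's own computation displays (there written as $y^{2\alpha+3}(1-y^2)^{\delta-1}dy$ after $y=\tanh x$), and it is integrable precisely because $\delta>0$. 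With that corrected definition your whole argument --- including the surjective-isometry formulation of completeness, which is a nice way to phrase what the paper only sketches --- goes through verbatim. As written, however, the reduction of the odd--odd inner product to a Jacobi orthogonality relation is a step that fails.
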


\begin{proof}
To begin with, let us prove the orthogonality of \ssf$\{{\rm
H}_{\ssf n}^{\ssf\delta}\}_{n\in\N}$ \ssf in
\ssf$L^2(\R,A_{\alpha,\beta}(|x|)\ssf dx)$. Firstly, by oddness
\begin{equation*}
\int_{\ssf\R} {\rm H}_{\ssf2m}^{\ssf\delta}(x)\, {\rm
H}_{\ssf2n+1}^{\ssf\delta}(x)\, A_{\alpha,\beta}(|x|)\, dx =0\ssf.
\end{equation*}
Secondly, we obtain
\begin{align*}
&\int_{\ssf\R} {\rm H}_{\ssf2m}^{\ssf\delta}(x)\, {\rm
H}_{\ssf2n}^{\ssf\delta}(x)\, A_{\alpha,\beta}(|x|)\,
dx\\
&=\,2\ssf\int_{\,0}^{\ssf1}\! {\rm
P}_m^{(\alpha,\delta)}(1\!-\ssb2\ssf y^2)\, {\rm
P}_n^{(\alpha,\delta)}(1\!-\ssb2\ssf y^2)\, y^{\ssf2\alpha+1}\,
(1\!-\ssb y^2)^\delta\,
dy\\
&=\,2^{-\alpha-\delta-1}\ssb \int_{-1}^{\ssf1}\! {\rm
P}_m^{(\alpha,\delta)}(z)\, {\rm P}_n^{(\alpha,\delta)}(z)\,
(1\!-\ssb z)^\alpha\,
(1\!+\ssb z)^\delta\,dz\\
&=\frac{\Gamma(\alpha\!+\!n\!+\!1)\,\Gamma(\delta\!+\!n\!+\!1)}
{(\alpha\!+\!\delta\!+\!2\ssf
n\!+\!1)\;n\ssf!\;\Gamma(\alpha\!+\!\delta\!+\!n\!+\!1)}\;
\delta_{m,n}\,,
\end{align*}
by performing the changes of variables \ssf$y\ssb=\ssb\tanh x$\ssf,
$z\ssb=\!1\!-\ssb y^2$ and by using the orthogonality of Jacobi
polynomials  (see for instance \cite{AAR})\,:
\begin{align*}
&\int_{-1}^{+1}\hspace{-1mm} {\rm P}_m^{(\alpha,\delta)}(z)\,{\rm
P}_n^{(\alpha,\delta)}(z)\,
(1\!-\!z)^\alpha\,(1\!+\!z)^\delta\,dz\\
&=\,2^{\ssf\alpha+\delta+1}\,
\frac{\Gamma(\alpha\!+\!n\!+\!1)\,\Gamma(\delta\!+\!n\!+\!1)}
{(\alpha\!+\!\delta\!+\!2\ssf
n\!+\!1)\;n\ssf!\;\Gamma(\alpha\!+\!\delta\!+\!n\!+\!1)}\;
\delta_{m,n}\ssf.
\end{align*}
Thirdly, by the same arguments
\begin{align*}
&\int_{\ssf\R} {\rm H}_{\ssf2m+1}^{\ssf\delta}(x)\, {\rm
H}_{\ssf2n+1}^{\ssf\delta}(x)\,
A_{\alpha,\beta}(|x|)\,dx\\
&=\,2\ssf\int_{\,0}^{\ssf1}\! {\rm
P}_m^{(\alpha+1,\delta-1)}(1\!-\ssb2\ssf y^2)\, {\rm
P}_n^{(\alpha+1,\delta-1)}(1\!-\ssb2\ssf y^2)\,
y^{\ssf2\alpha+3}\,(1\!-\ssb y^2)^{\delta-1}\,dy\\
&=\,2^{-\alpha-\delta-1}\ssb\int_{-1}^{\ssf1}\! {\rm
P}_m^{(\alpha+1,\delta-1)}(z)\,{\rm P}_n^{(\alpha+1,\delta-1)}(z)\,
(1\!-\ssb z)^{\alpha+1}\,(1\!+\ssb z)^{\delta-1}\,dz\\
&=\frac{\Gamma(\alpha\!+\!n\!+\!2)\,\Gamma(\delta\!+\!n)}
{(\alpha\!+\!\delta\!+\!2n\!+\!1)\,n\ssf!\,\Gamma(\alpha\!+\!\delta\!+\!n\!+\!1)}\;
\delta_{m,n}\ssf.
\end{align*}
Let us turn to the completeness of \ssf$\{{\rm H}_{\ssf
n}^{\ssf\delta}\}_{n\in\N}$ \ssf in
\ssf$L^2(\R,A_{\alpha,\beta}(|x|)\ssf dx)$\ssf. Recall (see for
instance \cite{AAR}) that the Jacobi polynomials \ssf$\{{\rm
P}_n^{(\tilde\alpha,\tilde\delta)}\}_{n\in\N}$ \ssf span a dense
subspace of \ssf$L^2(\,]\!-\!1,1\ssf[\,,
(1\!-\!z)^{\tilde\alpha}\ssf(1\!+\!z)^{\tilde\delta}\ssf dz)$\ssf.
By the above changes of variables, we deduce that \ssf$\{{\rm
H}_{\ssf2n}^{\ssf\delta}\}_{n\in\N}$ \ssf and \ssf$\{{\rm
H}_{\ssf2n+1}^{\ssf\delta}\}_{n\in\N}$ \ssf span dense subspaces of
\ssf$L^2(\R,A_{\alpha,\beta}(|x|)\ssf dx)_{\text{\rm e}}$ and
\ssf$L^2(\R,A_{\alpha,\beta}(|x|)\ssf dx)_{\text{\rm o}}$
respectively.
\end{proof}

\begin{remark}
In \eqref{basis}, let us replace \,$\delta$ by \,$\varepsilon^{-2}$,
$x$ by \,$\varepsilon\ssf x$ and let
\,$\varepsilon\!\searrow\!0$\ssf. As
\begin{equation*}
(\cosh\varepsilon x)^{-\alpha-\beta-\ssf\varepsilon^{-2}-2}
\longrightarrow\;e^{-\frac{x^2}2}
\end{equation*}
and
\begin{equation*}\textstyle
{\rm P}_{\ssf n}^{(a,\ssf b\ssf+\ssf\varepsilon^{-2})}
(1\!-\ssb2\tanh^2\ssb\varepsilon\ssf x)
=\frac{(a\ssf+1)_n}{n\ssf!}\,{}_2{\rm F}_{\!1}(\ssb-\ssf n\ssf,
a\ssb+\ssb b\ssb+\ssb\varepsilon^{-2}\!+\ssb n\ssb+\!1\ssf;
a\ssb+\!1\ssf;\tanh^2\ssb\varepsilon\ssf x)
\end{equation*}
tends to the Laguerre polynomial
\begin{equation*}\textstyle
{\rm L}_{\ssf n}^{\ssf a}(x^2) =\frac{(a\ssf+1)_n}{n\ssf!}\,
{}_1{\rm F}_{\!1}(\ssb-\ssf n\,;a\ssb+\!1\ssf;x^2\ssf)\ssf,
\end{equation*}
we recover in the limit the even and odd Hermite functions
constructed by Rosenblum \cite[Definition 3.4]{Rosn} in the rational
Dunkl  setting\,{\rm:}
\begin{equation*}\begin{cases}
\;{\rm H}_{\ssf2n}^{\,\varepsilon^{-2}}(\varepsilon x)
\longrightarrow\,
e^{-\frac{x^2}2}\,{\rm L}_{\ssf n}^{\ssf\alpha}(x^2)\,,\\
\;\varepsilon^{-1}\, {\rm
H}_{\ssf2n+1}^{\,\varepsilon^{-2}}(\varepsilon x) \longrightarrow\,
e^{-\frac{x^2}2}\,{\rm L}_{\,n}^{\alpha+1}(x^2)\,x\,.
\end{cases}\end{equation*}
\end{remark}

\begin{theorem}\label{FH}
The Opdam--Cherednik transform of \,$\{H_n^\delta\}_{n\in\N}$ is
given by
\begin{equation}\begin{aligned}\label{FHeven}
\mathcal{F}({\rm H}_{\ssf2n}^{\ssf\delta})(\lambda)
&=\frac{(-1)^n}{n\ssf!}\frac{\Gamma(\alpha\!+\!1)\,
\Gamma(\frac{\delta+1+i\lambda}2)\,\Gamma(\frac{\delta+1-i\lambda}2)}
{\Gamma(\frac{\alpha+\beta+\delta}2\!+\!n\!+\!1)\,
\Gamma(\frac{\alpha-\beta+\delta}2\!+\!n\!+\!1)}\\
&\ssf\times\,{\rm P}_n\Bigl(-\ssf\frac{\lambda^2}4;
\frac{\delta\!+\!1}2,\frac{\delta\!+\!1}2,
\frac{\alpha\!+\!\beta\!+\!1}2,\frac{\alpha\!-\!\beta\!+\!1}2\Bigr)\,,
\end{aligned}\end{equation}
and
\begin{equation}\begin{aligned}\label{FHodd}
\mathcal{F}({\rm H}_{\ssf2n+1}^{\ssf\delta})(\lambda)
&=\frac{(-1)^n}{n\ssf!}\,
\frac{(\rho\!+\!i\lambda)\,\Gamma(\alpha\!+\!1)\,
\Gamma(\frac{\delta+1+i\lambda}2)\,
\Gamma(\frac{\delta+1-i\lambda}2)}
{2\,\Gamma(\frac{\alpha+\beta+\delta}2\!+\!n\!+\!2)\,
\Gamma(\frac{\alpha-\beta+\delta}2\!+\!n\!+\!1)}\\
&\ssf\times{\rm P}_n\Bigl(-\ssf\frac{\lambda^2}4;
\frac{\delta\!+\!1}2,\frac{\delta-1}2,
\frac{\alpha\!+\!\beta\!+\!3}2,\frac{\alpha\!-\!\beta\!+\!1}2\Bigr)
\end{aligned}\end{equation}
where
\begin{align*}
{\rm P}_n(t^2;a\ssf,b\ssf,c\ssf,d\ssf)
&=(a\!+\!b)_n\,(a\!+\!c)_n\,(a\!+\!d)_n\\
&\times\,{}_4{\rm F}_{\!3} \Bigr(\begin{matrix} \ssf-\ssf n\ssf,\ssf
a\!+\!b\!+\!c\!+\!d\!+\!n\!-\!1\ssf,
\ssf a\!+\!t\ssf,\ssf a\!-\!t\,\\
a\!+\!b\ssf,\ssf a\!+\!c\ssf,\ssf a\!+\!d
\end{matrix};1\Bigr)
\end{align*}
denotes the Wilson polynomials.
\end{theorem}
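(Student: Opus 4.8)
The plan is to reduce the Opdam--Cherednik transform to the Jacobi transform $\mathcal{F}_{\alpha,\beta}$ of \eqref{jacobitrans}, and then to apply Koornwinder's evaluation \cite{K2} of the Jacobi transform of the functions $(\cosh t)^{-\mu}\,{\rm P}_n^{(a,c)}(1\!-\!2\tanh^2 t)$, which is a Wilson polynomial in $\lambda^2$ up to explicit gamma factors. \emph{Splitting by parity.} The function ${\rm H}_{2n}^{\delta}$ is even and ${\rm H}_{2n+1}^{\delta}$ is odd; since ${\rm G}_{\lambda,\mathrm{e}}^{(\alpha,\beta)}=\varphi_\lambda^{(\alpha,\beta)}$ is even and ${\rm G}_{\lambda,\mathrm{o}}^{(\alpha,\beta)}$ is odd, exactly one parity of ${\rm G}_\lambda^{(\alpha,\beta)}(-\cdot)$ survives in \eqref{fourier} in each case. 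For the even index this gives directly
\[
\mathcal{F}({\rm H}_{2n}^{\delta})(\lambda)
=2\,\mathcal{F}_{\alpha,\beta}\bigl((\cosh\cdot)^{-\rho-\delta-1}\,
{\rm P}_n^{(\alpha,\delta)}(1\!-\!2\tanh^2\!\cdot)\bigr)(\lambda).
\]
For the odd index, using ${\rm G}_{\lambda,\mathrm{o}}^{(\alpha,\beta)}(-x)=-\tfrac{\rho+i\lambda}{4(\alpha+1)}\sinh 2x\,\varphi_\lambda^{(\alpha+1,\beta+1)}(x)$ together with $\tanh x\,\sinh 2x=2\sinh^2\!x$ and $(\sinh x)^2A_{\alpha,\beta}(x)=(\cosh x)^{-2}A_{\alpha+1,\beta+1}(x)$, one obtains a Jacobi transform with shifted parameters,
\[
\mathcal{F}({\rm H}_{2n+1}^{\delta})(\lambda)
=-\frac{\rho+i\lambda}{\alpha+1}\,
\mathcal{F}_{\alpha+1,\beta+1}\bigl((\cosh\cdot)^{-\rho-\delta-3}\,
{\rm P}_n^{(\alpha+1,\delta-1)}(1\!-\!2\tanh^2\!\cdot)\bigr)(\lambda).
\]
(Equivalently, one may feed ${\rm H}_{2n+1}^{\delta}$ into the identity $\mathcal{F}(f)=2\mathcal{F}_{\alpha,\beta}(f_{\mathrm e})+2(\rho+i\lambda)\mathcal{F}_{\alpha,\beta}(Jf_{\mathrm o})$ and compute $Jf_{\mathrm o}$ in closed form via an antiderivative formula for Jacobi polynomials.)

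\emph{The key integral.} In both cases one now inserts the second expression for the Jacobi function in \eqref{Jacobi}, namely $\varphi_\lambda^{(a,b)}(t)=(\cosh t)^{-\rho_a-i\lambda}\,{}_2{\rm F}_1(\tfrac{\rho_a+i\lambda}2,\tfrac{a-b+1+i\lambda}2;a\!+\!1;\tanh^2 t)$ with $\rho_a=a\!+\!b\!+\!1$, writes out $A_{a,b}(t)=(\sinh t)^{2a+1}(\cosh t)^{2b+1}$, and substitutes $u=\tanh^2 t$. The Jacobi transform then collapses to
\[
\int_0^1{\rm P}_n^{(a,c)}(1\!-\!2u)\;
{}_2{\rm F}_1\!\Bigl(\tfrac{\rho_a+i\lambda}2,\tfrac{a-b+1+i\lambda}2;a\!+\!1;u\Bigr)\,
u^{\,a}\,(1\!-\!u)^{\,\sigma+i\lambda/2}\,du,
\]
with $\sigma$ a real parameter read off from each reduction. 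Expanding ${\rm P}_n^{(a,c)}$ and the ${}_2{\rm F}_1$ in powers of $u$, integrating term by term against the Beta weight, and resumming one of the two resulting series by a classical transformation of a ${}_3{\rm F}_2$ at $1$, this integral evaluates to $\Gamma(\tfrac{\delta+1+i\lambda}2)\Gamma(\tfrac{\delta+1-i\lambda}2)$ times elementary gamma factors times a terminating balanced ${}_4{\rm F}_3$ at $1$ with one pair of numerator parameters $\tfrac{\delta+1\pm i\lambda}2$, i.e. a Wilson polynomial in $\lambda^2$; this is precisely the evaluation of \cite{K2}. That the factor carrying the $\lambda$-dependence, once the gamma factors (resp. the factor $\rho+i\lambda$ in the odd case) are extracted, is a polynomial in $\lambda^2$ is forced by $\varphi_\lambda^{(a,b)}=\varphi_{-\lambda}^{(a,b)}$.

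\emph{Bookkeeping and the hard point.} Finally one substitutes Koornwinder's evaluation, rewrites each Pochhammer product as a ratio of gamma values (e.g. $(\tfrac{\alpha+\beta+\delta}2\!+\!1)_n=\Gamma(\tfrac{\alpha+\beta+\delta}2\!+\!n\!+\!1)/\Gamma(\tfrac{\alpha+\beta+\delta}2\!+\!1)$, and similarly for the other parameters), and simplifies with $\Gamma(z\!+\!1)=z\Gamma(z)$ and the duplication formula; comparing with the definition of ${\rm P}_n(t^2;a,b,c,d)$ produces \eqref{FHeven} and \eqref{FHodd}. The main obstacle is the key integral of the middle step: turning the double hypergeometric sum coming from term-by-term integration into the single balanced ${}_4{\rm F}_3$ that defines a Wilson polynomial. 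All the special-function content sits there (it rests on the appropriate transformation of a ${}_3{\rm F}_2$ at $1$), and it is exactly what one borrows from \cite{K2}; the parity reduction and the gamma-constant manipulations are routine.
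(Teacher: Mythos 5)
Your overall skeleton is the same as the paper's: split by parity, observe that $\mathcal{F}({\rm H}_{2n}^{\delta})=2\,\mathcal{F}_{\alpha,\beta}({\rm H}_{2n}^{\delta})$ and that the odd case reduces to a factor $(\rho+i\lambda)$ times a Jacobi transform with shifted parameters $(\alpha+1,\beta+1)$, then invoke Koornwinder. Where you diverge is in the evaluation of the resulting Jacobi transform, and this is exactly where your write-up has a hole: you substitute $u=\tanh^2 t$, expand both the Jacobi polynomial and the ${}_2{\rm F}_{\!1}$ from \eqref{Jacobi}, integrate term by term, and then declare that a ``classical transformation of a ${}_3{\rm F}_{\!2}$ at $1$'' collapses the double sum to the balanced ${}_4{\rm F}_{\!3}$ --- but you never identify the transformation or carry it out, deferring the entire special-function content to \cite{K2}. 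The paper's route makes that step unnecessary: using ${\rm P}_n^{(\alpha,\delta)}(w)=(-1)^n{\rm P}_n^{(\delta,\alpha)}(-w)$ and $2\tanh^2\!x-1=1-2\cosh^{-2}\!x$, the definition \eqref{JacobiPolynomial} turns ${\rm H}_n^{\delta}$ into a \emph{finite} linear combination of pure powers $(\cosh x)^{-\alpha-\beta-\mu-1}$; each such power has the known transform \eqref{Jcosh}, and the finite sum over $m$ is, after converting $\Gamma(z+m)$ into Pochhammer symbols, \emph{literally} the terminating ${}_4{\rm F}_{\!3}$ defining the Wilson polynomial. No resummation and no double series occur. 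You should either adopt this expansion or actually exhibit the ${}_3{\rm F}_{\!2}$ identity you are relying on; as written, the ``hard point'' you flag is simply not proved.

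There is also a concrete inconsistency in your odd case. Your displayed reduction reads
\begin{equation*}
\mathcal{F}({\rm H}_{2n+1}^{\delta})(\lambda)
=-\,\frac{\rho+i\lambda}{\alpha+1}\,
\mathcal{F}_{\alpha+1,\beta+1}\bigl((\cosh\cdot)^{-\rho-\delta-3}\,
{\rm P}_n^{(\alpha+1,\delta-1)}(1\!-\!2\tanh^2\!\cdot)\bigr)(\lambda)\,,
\end{equation*}
with a minus sign coming from ${\rm G}_{\lambda,\text{\rm o}}^{(\alpha,\beta)}(-x)=-\,{\rm G}_{\lambda,\text{\rm o}}^{(\alpha,\beta)}(x)$ in \eqref{fourier}. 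Feeding \eqref{Jcosh} (with parameters $\alpha+1,\beta+1$ and $\mu=\delta+2m+2$) into this expression yields exactly the \emph{negative} of the right-hand side of \eqref{FHodd}; you cannot ``produce \eqref{FHodd}'' from your own intermediate formula without dropping that sign somewhere. (A sanity check at $n=0$, $\alpha=\beta=0$ confirms that with the convention \eqref{fourier} the value of $\mathcal{F}({\rm H}_{1}^{\delta})(0)$ is negative, i.e.\ your minus sign is the one actually forced by the definition; the paper's own proof starts from $\mathcal{F}({\rm H}_{2n+1}^{\delta})(\lambda)=\int_{\R}{\rm H}_{2n+1}^{\delta}\,{\rm G}_{\lambda,\text{\rm o}}^{(\alpha,\beta)}\,A_{\alpha,\beta}(|\cdot|)$ \emph{without} the minus.) Either way, your proposal as it stands asserts a conclusion that contradicts its own penultimate line, and you must resolve this sign explicitly rather than gloss over it in the ``bookkeeping'' step.
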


\begin{proof}
By evenness, the Opdam--Cherednik transform $\mathcal{F}({\rm
H}_{\ssf2n}^{\ssf\delta})$ coincides with the Jacobi transform
$\mathcal F_{\alpha, \beta}({\rm H}_{\ssf2n}^{\ssf\delta})$. Thus
\eqref{FHeven} amounts to Formula (9.4) in \cite{K2}. Let us recall
its proof, which was sketched in \cite[Section 9]{K2} and which will
be used for \eqref{FHodd}. On one hand, we expand
\begin{align}\label{Hevencosh}\textstyle
{\rm H}_{\ssf2n}^{\ssf\delta}(x) &=(-1)^n\,(\cosh
x)^{-\alpha-\beta-\delta-2}\, {\rm P}_{\ssf
n}^{(\delta,\alpha)}(2\tanh^2\!x\ssb-\!1)
\nonumber\\
&=(-1)^n\,\frac{(\delta\!+\!1)_n}{n\ssf!}\, (\cosh
x)^{-\alpha-\beta-\delta-2}\, {}_2{\rm
F}_{\!1}(-n\ssf,\alpha\!+\!\delta\!+\!n\!+\!1;
\delta\!+\!1;\cosh^{-2}\!x)
\nonumber\\
&=\frac{(-1)^n}{n\ssf!}\,(\delta\!+\!1)_n\,\sum_{m=0}^n\,
\frac{(-\ssf
n)_m\,(\alpha\!+\!\delta\!+\!n\!+\!1)_m}{(\delta\!+\!1)_m\,m\ssf!}\,
(\cosh x)^{-\alpha-\beta-\delta-2\ssf m-2}
\end{align}
in negative powers of \ssf$\cosh x$\ssf, using the symmetry
\begin{equation*}
{\rm P}_{\ssf n}^{(\alpha,\delta)}(x) =(-1)^n\,{\rm P}_{\ssf
n}^{(\delta,\alpha)}(-x)
\end{equation*}
and the definition \eqref{JacobiPolynomial} of Jacobi polynomials.
On the other hand, recall the following Jacobi transform
\cite[Formula (9.1)]{K2}\,:
\begin{equation}\label{Jcosh}
\int_{\,0}^{+\infty}\! (\cosh x)^{-\alpha-\beta-\mu-1}\,
\varphi_\lambda^{(\alpha,\beta)}(x)\, A_{\alpha,\beta}(x)\,dx
=\frac{\Gamma(\alpha\!+\!1)\,
\Gamma(\frac{\mu+i\lambda}2)\,\Gamma(\frac{\mu-i\lambda}2)}
{2\,\Gamma(\frac{\alpha+\beta+\mu+1}2)\,\Gamma(\frac{\alpha-\beta+\mu+1}2)}\,.
\end{equation}
We conclude by combining \eqref{Hevencosh} and \eqref{Jcosh}\,:

\begin{align*}
\mathcal{F}({\rm H}_{\ssf2n}^{\ssf\delta})(\lambda)
&=\int_{\ssf\R}{\rm H}_{\ssf2n}^{\ssf\delta}(x)\,
\varphi_\lambda^{(\alpha,\beta)}(x)\,
A_{\alpha,\beta}(|x|)\,dx\\
&=\frac{(-1)^n}{n\ssf!}\,(\delta\!+\!1)_n\,\sum_{m=0}^n\,
\frac{(-\ssf n)_m\,(\alpha\!+\!\delta\!+\!n\!+\!1)_m}{(\delta\!+\!1)_m\,m\ssf!}\\
&\ssf\times\ssf\underbrace{2\int_{\,0}^{+\infty}\! (\cosh
x)^{-\alpha-\beta-\delta-2\ssf m-2}\,
\varphi_\lambda^{(\alpha,\beta)}(x)\, A_{\alpha,\beta}(x)\,dx
}_{\displaystyle\frac{\Gamma(\alpha\!+\!1)\,
\Gamma(\frac{\delta+1+i\lambda}2\!+\!m)\,
\Gamma(\frac{\delta+1-i\lambda}2\!+\!m)}
{\Gamma(\frac{\alpha+\beta+\delta}2\!+\!m\!+\!1)\,
\Gamma(\frac{\alpha-\beta+\delta}2\!+\!m\!+\!1)}}\\
&=\frac{(-1)^n}{n\ssf!}\,\frac{\Gamma(\alpha\!+\!1)\,
\Gamma(\frac{\delta+1+i\lambda}2)\,
\Gamma(\frac{\delta+1-i\lambda}2)}
{\Gamma(\frac{\alpha+\beta+\delta}2\!+\!1)\,
\Gamma(\frac{\alpha-\beta+\delta}2\!+\!1)}\,
(\delta\!+\!1)_n\\
&\ssf\times\underbrace{\sum_{m=0}^n\, \frac{(-\ssf n)_m\,
(\alpha\!+\!\delta\!+\!n\!+\!1)_m\, (\frac{\delta+1+i\lambda}2)_m\,
(\frac{\delta+1-i\lambda}2)_m} {(\delta\!+\!1)_m\,
(\frac{\alpha+\beta+\delta}2\!+\!1)_m\,
(\frac{\alpha-\beta+\delta}2\!+\!1)_m\, m\ssf!}
}_{\displaystyle{}_4{\rm F}_{\!3} \Bigl(\begin{matrix} \ssf-\ssf
n\ssf,\ssf\alpha\!+\!\delta\!+\!n\!+\!1\ssf,
\frac{\delta+1+i\lambda}2\ssf,\frac{\delta+1-i\lambda}2\,\\
\delta\!+\!1\ssf, \frac{\alpha+\beta+\delta}2\!+\!1\ssf,
\frac{\alpha-\beta+\delta}2\!+\!1
\end{matrix};1\Bigr)}\\
&=\frac{(-1)^n}{n\ssf!}\,\frac{\Gamma(\alpha\!+\!1)\,
\Gamma(\frac{\delta+1+i\lambda}2)\,
\Gamma(\frac{\delta+1-i\lambda}2)}
{\Gamma(\frac{\alpha+\beta+\delta}2\!+\!n\!+\!1)\,
\Gamma(\frac{\alpha-\beta+\delta}2\!+\!n\!+\!1)}\\
&\ssf\times{\rm P}_n\Bigl(-\ssf\frac{\lambda^2}4;
\frac{\delta\!+\!1}2,\frac{\delta\!+\!1}2,
\frac{\alpha\!+\!\beta\!+\!1}2,\frac{\alpha\!-\!\beta\!+\!1}2\Bigr)
\end{align*}
Similarly,
\begin{equation*}\begin{aligned}\label{Hoddcosh}\textstyle
{\rm H}_{\ssf2n+1}^{\ssf\delta}(x) &=(-1)^n\,(\cosh
x)^{-\alpha-\beta-\delta-2}\, {\rm P}_{\ssf
n}^{(\delta-1,\alpha+1)}(2\tanh^2\!x\ssb-\!1)\, \tanh x
\nonumber\\
&=\frac{(-1)^n}{n\ssf!}\,(\delta)_n\,(\sinh x)\, (\cosh
x)^{-\alpha-\beta-\delta-3}\, {}_2{\rm
F}_{\!1}(-n\ssf,\alpha\!+\!\delta\!+\!n\!+\!1;
\delta\ssf;\cosh^{-2}\!x)
\nonumber\\
&=\frac{(-1)^n}{n\ssf!}\,(\delta)_n\,(\sinh x)\,\sum_{m=0}^n\,
\frac{(-\ssf n)_m\,(\alpha\!+\!\delta\!+\!n\!+\!1)_m}
{(\delta)_m\,m\ssf!}\, (\cosh x)^{-\alpha-\beta-\delta-2\ssf m-3}
\end{aligned}\end{equation*}
and
\begin{align*}
\mathcal{F}({\rm H}_{\ssf2n+1}^{\ssf\delta})(\lambda)
&=\int_{\ssf\R}{\rm H}_{\ssf2n+1}^{\ssf\delta}(x)\, {\rm
G}_{\lambda,{\rm o}}^{(\alpha,\beta)}(x)\,
A_{\alpha,\beta}(|x|)\,dx\\
&=\ssf\frac{\rho\!+\!i\lambda}{4\ssf(\alpha\!+\!1)}
\int_{\ssf\R}{\rm H}_{\ssf2n+1}^{\ssf\delta}(x)\,
\varphi_\lambda^{(\alpha+1,\beta+1)}(x)\,
(\sinh2x)\,A_{\alpha,\beta}(|x|)\,dx\\
&=\ssf\frac{\rho\!+\!i\lambda}{\alpha\!+\!1}
\int_{\,0}^{+\infty}\hspace{-1mm}(\sinh x\cosh x)^{-1}\, {\rm
H}_{\ssf2n+1}^{\ssf\delta}(x)\,
\varphi_\lambda^{(\alpha+1,\beta+1)}(x)\,
A_{\alpha+1,\beta+1}(x)\,dx\\
\end{align*}

\begin{align*}
&=\frac{(-1)^n}{n\ssf!}\,\frac{\rho\!+\!i\lambda}{\alpha\!+\!1}\,
(\delta)_n\,\sum_{m=0}^n\, \frac{(-\ssf
n)_m\,(\alpha\!+\!\delta\!+\!n\!+\!1)_m}
{(\delta)_m\,m\ssf!}\\
&\ssf\times\underbrace{\int_{\,0}^{+\infty}\! (\cosh
x)^{-\alpha-\beta-\delta-2\ssf m-4}\,
\varphi_\lambda^{(\alpha+1,\beta+1)}(x)\,
A_{\alpha+1,\beta+1}(x)\,dx
}_{\displaystyle\frac{\Gamma(\alpha\!+\!2)\,
\Gamma(\frac{\delta+1+i\lambda}2\!+\!m)\,
\Gamma(\frac{\delta+1-i\lambda}2\!+\!m)}
{2\,\Gamma(\frac{\alpha+\beta+\delta}2\!+\!m\!+\!2)\,
\Gamma(\frac{\alpha-\beta+\delta}2\!+\!m\!+\!1)}}\\
&=\frac{(-1)^n}{n\ssf!}\,
\frac{(\rho\!+\!i\lambda)\,\Gamma(\alpha\!+\!1)\,
\Gamma(\frac{\delta+1+i\lambda}2)\,
\Gamma(\frac{\delta+1-i\lambda}2)}
{2\,\Gamma(\frac{\alpha+\beta+\delta}2\!+\!2)\,
\Gamma(\frac{\alpha-\beta+\delta}2\!+\!1)}\,
(\delta)_n\\
&\ssf\times\underbrace{\sum_{m=0}^n\, \frac{(-\ssf n)_m\,
(\alpha\!+\!\delta\!+\!n\!+\!1)_m\, (\frac{\delta+1+i\lambda}2)_m\,
(\frac{\delta+1-i\lambda}2)_m} {(\delta)_m\,
(\frac{\alpha+\beta+\delta}2\!+\!2)_m\,
(\frac{\alpha-\beta+\delta}2\!+\!1)_m\, m\ssf!}
}_{\displaystyle{}_4{\rm F}_{\!3} \Bigl(\begin{matrix} \ssf-\ssf
n\ssf,\ssf\alpha\!+\!\delta\!+\!n\!+\!1\ssf,
\frac{\delta+1+i\lambda}2\ssf,\frac{\delta+1-i\lambda}2\,\\
\delta\ssf, \frac{\alpha+\beta+\delta}2\!+\!2\ssf,
\frac{\alpha-\beta+\delta}2\!+\!1
\end{matrix};1\Bigr)}\\
&=\frac{(-1)^n}{n\ssf!}\,
\frac{(\rho\!+\!i\lambda)\,\Gamma(\alpha\!+\!1)\,
\Gamma(\frac{\delta+1+i\lambda}2)\,
\Gamma(\frac{\delta+1-i\lambda}2)}
{2\,\Gamma(\frac{\alpha+\beta+\delta}2\!+\!n\!+\!2)\,
\Gamma(\frac{\alpha-\beta+\delta}2\!+\!n\!+\!1)}\\
&\ssf\times{\rm P}_n\Bigl(-\ssf\frac{\lambda^2}4;
\frac{\delta\!+\!1}2,\frac{\delta-1}2,
\frac{\alpha\!+\!\beta\!+\!3}2,\frac{\alpha\!-\!\beta\!+\!1}2\Bigr)
\end{align*}
\end{proof}

By comparing the Opdam--Cherednik transform of \ssf${\rm H}_{\ssf
n}^{\ssf\delta}$ with the particular case \ssf${\rm H}_{\ssf
0}^{\ssf\delta}(x)\ssb=\ssb(\cosh x)^{-\alpha-\beta-\delta-2}$, we
obtain the following Rodrigues type formula.

\begin{corollary}
Consider the polynomials
\begin{equation*}\begin{cases}
\,\tilde{\rm P}_{2n}^{\,\delta}(t) =\frac{\vphantom{\big|}(-1)^n}
{\vphantom{\big|}n\ssf!\, (\frac{\alpha+\beta+\delta}2+1)_n\,
(\frac{\alpha-\beta+\delta}2+1)_n}\, {\rm
P}_n\bigl(\frac{t^2}4;\frac{\delta+1}2,\frac{\delta+1}2,
\frac{\alpha+\beta+1}2,\frac{\alpha-\beta+1}2\bigr)\ssf,\\
\,\tilde{\rm P}_{2n+1}^{\,\delta}(t)
=\frac{\vphantom{\big|}(-1)^n\,(\rho\ssf+\ssf t)}
{\vphantom{\big|}2\,n\ssf!\, (\frac{\alpha+\beta+\delta}2+1)_{n+1}\,
(\frac{\alpha-\beta+\delta}2+1)_n}\; {\rm
P}_n\bigl(\frac{t^2}4;\frac{\delta+1}2,\frac{\delta-1}2,
\frac{\alpha+\beta+3}2,\frac{\alpha-\beta+1}2\bigr)\ssf.
\end{cases}\end{equation*}
Then
\begin{equation}\label{Rodrigues}
\,{\rm H}_{\ssf n}^{\ssf\delta}(x) =\ssf\tilde{\rm
P}_n^{\ssf\delta}({\rm T}_{\vphantom{0}\ssf x}^{(\alpha,\beta)})\ssf
(\cosh x)^{-\alpha-\beta-\delta-2} \qquad\forall\,n\!\in\!\N\ssf.
\end{equation}
In other words, by replacing in the expansion of the polynomial
\,$\tilde{\rm P}_n^{\ssf\delta}(t)$ the variable~\,$t$ by the
Dunkl--Cherednik operator \ssf$T^{(\alpha,\beta)}$\ssb, one obtains
a differential--difference operator, whose action on the function
\,$(\cosh x)^{-\alpha-\beta-\delta-2}$ yields the function ${\rm
H}_{\ssf n}^{\ssf\delta}(x)\ssf$.
\end{corollary}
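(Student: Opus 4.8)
The plan is to prove \eqref{Rodrigues} by comparing the Opdam--Cherednik transforms of its two sides and then invoking the injectivity of $\mathcal{F}$. First I would record that, by the inversion formula $\mathcal{J}\mathcal{F}=\mathrm{id}$ together with the eigenvalue equation \eqref{eigen}, one has
$$
\mathcal{F}\bigl({\rm T}^{(\alpha,\beta)}f\bigr)(\lambda)=i\lambda\,\mathcal{F}(f)(\lambda)\qquad\forall\,f\in\mathcal{S}_{\alpha,\beta}(\R),
$$
whence, for every polynomial $P$, the operator $P({\rm T}^{(\alpha,\beta)})=\mathcal{J}\circ\bigl(P(i\lambda)\,\cdot\,\bigr)\circ\mathcal{F}$ maps $\mathcal{S}_{\alpha,\beta}(\R)=(\cosh x)^{-\rho}\mathcal{S}(\R)$ into itself and satisfies $\mathcal{F}(P({\rm T}^{(\alpha,\beta)})f)(\lambda)=P(i\lambda)\,\mathcal{F}(f)(\lambda)$. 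Both functions in \eqref{Rodrigues} lie in $\mathcal{S}_{\alpha,\beta}(\R)$: the right-hand side because ${\rm H}_{\ssf0}^{\ssf\delta}(x)=(\cosh x)^{-\rho}(\cosh x)^{-\delta-1}$ does and ${\rm T}^{(\alpha,\beta)}$ preserves this space, and the left-hand side by \eqref{basis}, since $\tanh x$ together with all its derivatives is bounded while $(\cosh x)^{-\delta-1}$ decays exponentially with all its derivatives.

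It therefore suffices to show $\mathcal{F}\bigl(\tilde{\rm P}_n^{\ssf\delta}({\rm T}^{(\alpha,\beta)})\,{\rm H}_{\ssf0}^{\ssf\delta}\bigr)=\mathcal{F}({\rm H}_{\ssf n}^{\ssf\delta})$ for every $n$. By the intertwining relation the left-hand side equals $\tilde{\rm P}_n^{\ssf\delta}(i\lambda)\,\mathcal{F}({\rm H}_{\ssf0}^{\ssf\delta})(\lambda)$, and the $n\!=\!0$ case of \eqref{FHeven} (equivalently \eqref{Jcosh} with $\mu\!=\!\delta\!+\!1$) gives
$$
\mathcal{F}({\rm H}_{\ssf0}^{\ssf\delta})(\lambda)=\frac{\Gamma(\alpha\!+\!1)\,\Gamma(\frac{\delta+1+i\lambda}2)\,\Gamma(\frac{\delta+1-i\lambda}2)}{\Gamma(\frac{\alpha+\beta+\delta}2\!+\!1)\,\Gamma(\frac{\alpha-\beta+\delta}2\!+\!1)}.
$$
Substituting $t=i\lambda$ in the definitions of $\tilde{\rm P}_{2n}^{\ssf\delta}$ and $\tilde{\rm P}_{2n+1}^{\ssf\delta}$, the argument $\frac{t^2}4$ of the Wilson polynomial becomes $-\frac{\lambda^2}4$, so exactly the Wilson polynomials of \eqref{FHeven} and \eqref{FHodd} appear; the Pochhammer symbols $(\frac{\alpha+\beta+\delta}2+1)_n,(\frac{\alpha-\beta+\delta}2+1)_n$ in $\tilde{\rm P}_{2n}^{\ssf\delta}$ (resp. $(\frac{\alpha+\beta+\delta}2+1)_{n+1},(\frac{\alpha-\beta+\delta}2+1)_n$ in $\tilde{\rm P}_{2n+1}^{\ssf\delta}$) combine, through $\Gamma(a)\,(a)_k=\Gamma(a\!+\!k)$, with the $\Gamma$-factors of $\mathcal{F}({\rm H}_{\ssf0}^{\ssf\delta})(\lambda)$ to produce precisely the $\Gamma$-factors of \eqref{FHeven} (resp. \eqref{FHodd}); and in the odd case the factor $\rho+t=\rho+i\lambda$ in $\tilde{\rm P}_{2n+1}^{\ssf\delta}$ supplies the extra $\rho\!+\!i\lambda$ of \eqref{FHodd}. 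Hence $\tilde{\rm P}_n^{\ssf\delta}(i\lambda)\,\mathcal{F}({\rm H}_{\ssf0}^{\ssf\delta})(\lambda)=\mathcal{F}({\rm H}_{\ssf n}^{\ssf\delta})(\lambda)$, and the injectivity of $\mathcal{F}$ on $\mathcal{S}_{\alpha,\beta}(\R)$ yields \eqref{Rodrigues}.

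I do not expect any genuine difficulty here. Once Theorem \ref{FH} and the value of $\mathcal{F}({\rm H}_{\ssf0}^{\ssf\delta})$ are in hand, the whole argument reduces to the elementary identity $\Gamma(a)\,(a)_k=\Gamma(a\!+\!k)$ and to the substitution $t=i\lambda$. The only points that deserve a word of justification are the stability of $\mathcal{S}_{\alpha,\beta}(\R)$ under ${\rm T}^{(\alpha,\beta)}$, needed so that both sides of \eqref{Rodrigues} legitimately sit where $\mathcal{F}$ is injective, and the purely mechanical bookkeeping of which quadruple of Wilson parameters, and which shift of the $\Gamma$- and Pochhammer-arguments, belongs to the even index $2n$ rather than to the odd index $2n\!+\!1$.
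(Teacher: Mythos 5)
Your proof is correct and follows exactly the route the paper intends: the corollary is obtained "by comparing the Opdam--Cherednik transform of ${\rm H}_n^{\delta}$ with the particular case ${\rm H}_0^{\delta}$", i.e.\ by checking that $\tilde{\rm P}_n^{\delta}(i\lambda)\,\mathcal{F}({\rm H}_0^{\delta})(\lambda)=\mathcal{F}({\rm H}_n^{\delta})(\lambda)$ via $\Gamma(a)(a)_k=\Gamma(a+k)$ and then invoking the injectivity of $\mathcal{F}$. Your added justifications (the intertwining relation $\mathcal{F}({\rm T}^{(\alpha,\beta)}f)=i\lambda\,\mathcal{F}(f)$ and the membership of both sides in $\mathcal{S}_{\alpha,\beta}(\R)$) are exactly the details the paper leaves implicit.
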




\begin{thebibliography}{99}

\bibitem{AAR}
G.E. Andrews, R. Askey, R. Roy, \textit{Special functions\/},
Encyclopedia Math. Appl. 71, Cambridge Univ. Press (1999).
\bibitem{BO}
N. Ben Salem \& A. Ould Ahmed Salem, \textit{Convolution structure
associated with the Jacobi--Dunkl operator on $\R$\/}, Ramanujan J.
12 (2006), 359--378.
\bibitem{Cow}
M.G. Cowling, \textit{Herz's ``principe de majoration'' and the
Kunze--Stein phenomenon\/}, in \textit{Harmonic analysis and number
theory\/} (\textit{Montreal, 1996\/}), CMS Conf. Proc. 21, Amer.
Math. Soc. (1997), 73--88.
\bibitem{FK1}
M. Flensted-Jensen \& T.H. Koornwinder, \textit{The convolution
structure of the Jacobi functions expansion\/}, Ark. Math. 10
(1973), 247--262.
\bibitem{FK2}
M. Flensted-Jensen \& T.H. Koornwinder, \textit{Jacobi functions\,:
the addition formula and positivity of the dual convolution
structure\/}, Ark. Math. 17 (1979), 139--151.
\bibitem{GT}
L. Gallardo \& K. Trim\`eche, \textit{Positivity of the
Jacobi--Cherednik intertwining operator and its dual\/}, Adv. Pure
Appl. Math. 1 (2010), no. 2, 163--194.
\bibitem{KS}
R.A. Kunze and E.M. Stein, \textit{Uniformly bounded representations
and harmonic analysis of the $\times 2$ unimodular group\/}, Amer.
J. Math. 82 (1960), 1--62.
\bibitem{K1}
T.H. Koornwinder, \textit{Jacobi polynomials III\,: an analytic
proof of the addition formula\/}, SIAM J. Math. Anal. 3 (1975),
533--543.
\bibitem{K2}
T.H. Koornwinder, \textit{Jacobi functions and analysis on
noncompact semisimple Lie groups\/}, in \textit{Special functions\,:
group theoretical aspects and applications\/}, R.A. Askey,T.H.
Koornwinder and W. Schempp (eds.), Reidel (1984), 1--84.
\bibitem{M}
M.A. Mourou, \textit{Transmutation operators and Paley–Wiener
theorem associated with a Cherednik type operator on the real
line\/}, Anal. Appl. 8 (2010), 387--408.
\bibitem{Op}
E.M. Opdam, \textit{Harmonic analysis for certain representations of
graded Hecke algebras\/}, Acta. Math. 175 (1995), 75--121.
\bibitem{O}
E.M. Opdam, \textit{Lecture notes on Dunkl operators for real and
complex reflection groups\/}, Mem. Math. Soc. Japon 8 (2000).
\bibitem{Roes}
M. R\"osler, \textit{Bessel-type signed hypergroups on $\R$\/}, in
\textit{Probability measures on groups and related structures XI}
(\textit{Oberwolfach, 1994\/}), H. Heyer \& A. Mukherjea (eds.),
World Scientific (1995), 292--305.
\bibitem{Rosn}
M. Rosenblum, \textit{Generalized Hermite polynomials and the
Bose--like oscillator calculus\/}, Oper. Theory Adv. Appl. 73
(1994), 369--396.
\bibitem{Sch}
Br. Schapira, \textit{Contributions to the hypergeometric function
theory of Heckman and Opdam\,: sharp estimates, Schwartz space, heat
kernel}, Geom. Funct. Anal. 18 (2008), 222--250.
\end{thebibliography}
\end{document}